\documentclass{amsart}
\usepackage{fullpage}
\usepackage[utf8]{inputenc}
\usepackage[T1]{fontenc}
\usepackage{color}
\usepackage{amsmath, amssymb, amsthm}
\usepackage{graphicx}
\usepackage{hyperref}
\usepackage{fancyhdr}
\usepackage{physics}
\usepackage{cite}
\usepackage{xcolor}
\usepackage{subfiles}
\usepackage{comment}
\usepackage{nicefrac}

\title[The effect of noise perturbation for the stochastic Burgers equation]{Quantifying the effect of noise perturbation for the stochastic Burgers equation with additive trace-class noise}
\author{Sonja Cox$^{a}$ and Matas Urbonas$^{b}$}
\thanks{$^a$: s.g.cox@uva.nl, University of Amsterdam, KdVI, Netherlands, ORCID: 0000-0002-9417-1542}
\thanks{$^b$: matas.urbonas@univ-pau.fr, Universite de Pau et des Pays de l'Adour, E2S UPPA, CNRS, LMAP, Pau, France, ORCID: 0009-0009-5265-0359}
\thanks{Funding : SC is partially supported by NWO grant VI.Vidi.213.070}
\thanks{Keywords: noise perturbation, Burgers equation, weak error bounds, strong error bounds}
\thanks{MSC primary: 60H15, secondary: 35R60, 35A35}
\date{\today}

\newtheorem{theorem}{Theorem}[section]  
\newtheorem{lemma}[theorem]{Lemma}      
\newtheorem{proposition}[theorem]{Proposition}
\newtheorem{corollary}[theorem]{Corollary}

\theoremstyle{definition}

\newtheorem{remark}[theorem]{Remark}

\usepackage{etoolbox}

\makeatletter
\@addtoreset{theorem}{chapter}%
\@addtoreset{theorem}{section}%
\makeatother

\let\OriginalTheTheorem\thetheorem
\renewcommand{\thetheorem}{%
\ifnumequal{\number\value{section}}{0}{\thechapter.\arabic{theorem}}{%
\OriginalTheTheorem}}

\newcommand{\R}{\mathbb{R}} 
\newcommand{\N}{\mathbb{N}} 
\newcommand{\Prob}{\mathbb{P}} 
\newcommand{\calF}{\mathcal{F}}
\newcommand{\E}{\mathbb{E}}
\newcommand{\EE}[1]{\mathbb{E}\left[ #1 \right]}
\newcommand{\Ltwo}[1]{\left\| #1 \right\|_{L^{2}}}
\newcommand{\bigLtwo}[1]{\big\| #1 \big\|_{L^{2}}}

\newcommand{\calL}{\mathcal{L}}
\newcommand{\inn}[1]{\left\langle #1 \right\rangle}
\newcommand{\innt}[1]{\left\langle #1 \right\rangle_{L^2}}

\begin{document}

\begin{abstract}We establish upper bounds for the weak and strong error resulting from a perturbation of the noise driving the stochastic Burgers equation, where we assume the noise to be additive and of trace class and the initial value to be sufficiently regular. More specifically, replacing the covariance operator of the driving noise $Q_1 \in \mathcal{L}_1(L^2)$ in the Burgers equation by a covariance operator $Q_2 \in \mathcal{L}_1(L^2)$ results in a weak error of $\mathcal{O}\big(\| (-A)^{-1^{-} } (Q_1-Q_2) \|_{\mathcal{L}_1(L^2)}\big)$ and a strong error of $\mathcal{O}\big(\big\| (-A)^{-\nicefrac{1}{2}^{-}}\big|Q_1^{\nicefrac{1}{2}} -Q_2^{\nicefrac{1}{2}}\big| \big\|_{\mathcal{L}_2(L^2)}\big)$. Here $\|\cdot \|_{\mathcal{L}_1}$ is the trace class norm, $\|\cdot \|_{\mathcal{L}_2}$ is the Hilbert--Schmidt norm, and $A$ is the one-dimensional Dirichlet Laplacian that represents the leading term in the Burgers equation. In particular, our results provide upper bounds for the weak and strong error arising when approximating the trace class noise by finite-dimensional noise; the rates we obtain reflect the general philosophy that the weak convergence rate should be twice the strong rate.
\end{abstract}
\maketitle

\setcounter{tocdepth}{1}

\section{Introduction}
We consider noise perturbations for the stochastic viscous Burgers equation with homogeneous Dirichlet boundary conditions and additive trace class noise. More specifically, for $Q_1\in \mathcal{L}_1(L^2)$ a positive operator we let $\bigl(W^{Q_1}(t)\bigr)_{t\ge 0}$ be the $L^2$-valued $Q_1$-Wiener process given by
\begin{equation}  \label{eq:defWQ_intro}
W^{Q_1}(t) = \sum_{k\in\N}W^{(k)}(t) Q_1^{\nicefrac{1}{2}} h_k,\quad t\in\R^+,
\end{equation}
where $(h_k)_{k\in\N}$ is an orthonormal basis for $L^2$ and $(W^{(k)})_{k\in\N}$ is a 
sequence of independent standard real-valued Wiener processes independent of the initial value $X_0$. The Burgers equation we consider is formally given by
\begin{equation}\label{eq:Burgers_intro}
\left\lbrace
\begin{aligned}
&\partial_t X_1(t,z)=\Delta X_1(t,z)+2X_1(t,z)\nabla(X_1(t,z))+\dot{W}^{Q_{1}}(t,z),\quad t>0, z\in(0,1),\\
&X_1(t,0)=X_1(t,1)=0,\quad t>0,\\
&X_1(0,z)=X_0(z),\quad z\in(0,1),
\end{aligned}
\right.
\end{equation}
where $\nabla$ and $\Delta$ denote the first and second order derivatives with respect to $z$, and where $X_0$ is assumed to be a given (possibly random) initial value. We refer to Section~\ref{ssec:setting} for a rigorous interpretation of a solution to~\eqref{eq:Burgers_intro}. 

The goal of this article is to establish upper bounds for the strong and weak error resulting from replacing $Q_1$ by a different positive operator $Q_2\in \mathcal{L}_1(L^2)$. More specifically, let $X_{2}\colon [0,T]\times \Omega \rightarrow L^2$ be the solution to~\eqref{eq:Burgers_intro} with $W^{Q_1}$ replaced by $W^{Q_2}$; we obtain the following bounds (see Corollaries~\ref{cor:weak conv X X^{Q_2} X^{Q_2}_M} and~\ref{cor:strong conv X X^{Q_2} X^{Q_2}_M} below):

\begin{theorem}\label{thm:strong perturbation intro}
Assume that there exist $\gamma_0>0$ and $p>32$ such that $\EE{\exp(\gamma_0\|X_0\|_{L^2}^2)} < \infty$ and $\EE{\|(-A)^{\frac{1}{2}}X_0\|_{L^2}^p} < \infty$. Let $K_{\max} \in (0,\infty)$ and let $Q_1,Q_2 \in \mathcal{L}_1(L^2)$ be positive and self-adjoint, such that $\max\{\tr(Q_1),\tr(Q_2)\} \leq K_{\max}$. Let $\varphi:L^2 \to \R$ be twice Fréchet differentiable with continuous and bounded first and second order derivatives. Then, for all $\epsilon >0$ there exists $C_{\epsilon,\gamma_0,p,T}(X_0, K_{\max}, \varphi)\in (0,\infty)$ such that
    \begin{align}\label{eq:weak X^Q_1 vs X^Q_2 intro}
        \big|\mathbb{E}\big[\varphi(X_1(T))\big] - \mathbb{E}\big[\varphi(X_2(T))\big]\big| 
        \leq C_{\epsilon,\gamma_0,p,T}(X_0,K_{\max}, \varphi)
    \| (-A)^{-(1-\epsilon)} (Q_1-Q_2) \|_{\calL_1(L^2)},
    \end{align}
    and for all $\epsilon >0$, $r\in [1,\frac{p}{4})$  there exists $C_{\epsilon,\gamma_0,p,r,T}(X_0,K_{\max}) \in (0,\infty)$ such that 
    \begin{align}\label{eq:strong X^Q_1 vs X^Q_2 intro}
        \sup_{t\in [0,T]}\|X_1(t) - X_2(t)\|_{L^r(\Omega; L^2)} \leq C_{\epsilon, \gamma_0,p,r,T}(X_0,K_{\max})
        \big\| (-A)^{-(\nicefrac{1}{2}-\epsilon)} \big|Q_1^{\nicefrac{1}{2}} -Q_2^{\nicefrac{1}{2}}\big| \big\|_{\calL_2(L^2)}.
    \end{align}
\end{theorem}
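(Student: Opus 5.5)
For the strong bound \eqref{eq:strong X^Q_1 vs X^Q_2 intro} I will couple the two equations on the same probability space: both are driven by the same family $(W^{(k)})_{k\in\N}$ and the same initial value $X_0$. Writing $O_i(t)=\int_0^t e^{(t-s)A}\,dW^{Q_i}(s)$ for the Ornstein--Uhlenbeck processes and $Y_i=X_i-O_i$, the differences $O_1-O_2$ and $Y_1-Y_2$ can be estimated separately.

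\textbf{Stochastic convolution.} The difference $O_1-O_2$ is the stochastic convolution of $e^{(t-s)A}(Q_1^{1/2}-Q_2^{1/2})$. By Itô's isometry and Gaussianity, for every moment
\begin{equation*}
\|O_1(t)-O_2(t)\|_{L^r(\Omega;L^2)}\lesssim \Big(\int_0^t \|(-A)^{1/2-\epsilon}e^{sA}\|^2\,ds\Big)^{1/2}\,\big\|(-A)^{-(1/2-\epsilon)}(Q_1^{1/2}-Q_2^{1/2})\big\|_{\calL_2}.
\end{equation*}
The time integral is finite because $\|(-A)^{1/2-\epsilon}e^{sA}\|^2\lesssim s^{-1+2\epsilon}$. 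Writing $D=Q_1^{1/2}-Q_2^{1/2}$ with polar decomposition $D=U|D|$, we have $\|(-A)^{-\alpha}D\|_{\calL_2}=\|D^*(-A)^{-\alpha}\|_{\calL_2}=\||D|(-A)^{-\alpha}\|_{\calL_2}=\|(-A)^{-\alpha}|D|\|_{\calL_2}$, since $D$ is self-adjoint and $|D^*|=|D|$. This is exactly the norm appearing in the statement.

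\textbf{Nonlinear part.} Let $B(u)=\nabla(u^2)$. The difference $e=Y_1-Y_2$ solves the random PDE
\begin{equation*}
\dot e = Ae + B(X_1)-B(X_2), \qquad e(0)=0 .
\end{equation*}
I will do an energy estimate in $L^2$. Using $\langle \nabla(u^2),u\rangle=0$ and interpolation (Gagliardo--Nirenberg in 1D), one gets
\begin{equation*}
\tfrac{d}{dt}\|e\|^2 + \|\nabla e\|^2 \lesssim \big(1+\|\nabla X_1\|^2+\|\nabla X_2\|^2+\|X_1\|_{L^\infty}^{4}+\dots\big)\big(\|e\|^2+\|O_1-O_2\|^2_{H^{1/2-}}\big),
\end{equation*}
where I move the derivative onto $e$ and use a slightly smoother norm of $O_1-O_2$; the same $\epsilon$-loss is acceptable there. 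Gronwall then gives
\begin{equation*}
\|e(t)\|^2\le \exp\Big(C\int_0^T\big(\|\nabla X_1\|^2+\|\nabla X_2\|^2+\cdots\big)\,ds\Big)\cdot\sup_s\|O_1-O_2\|^2 .
\end{equation*}

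\textbf{Main obstacle.} The hard step is controlling the exponential moments $\E\exp\big(c\int_0^T\|\nabla X_i\|^2\,ds\big)$. I intend to obtain these from the Itô formula for $\|X_i\|^2$: the nonlinearity cancels, and exponential martingale inequalities give $\E\exp\big(\gamma\|X_i(t)\|^2+\gamma\int_0^t\|\nabla X_i\|^2\big)<\infty$ for $\gamma$ small relative to $\gamma_0$ and $K_{\max}$. Only small $\gamma$ is available, so I cannot afford a large constant in the exponent. I will therefore split the exponent using the $H^1$ moments from $\E\|(-A)^{1/2}X_0\|^p$, or localize via stopping times, and then apply Hölder in $\Omega$. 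This Hölder step is where the restriction $r<p/4$ comes from.

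\textbf{Weak bound.} For \eqref{eq:weak X^Q_1 vs X^Q_2 intro} I will use the Kolmogorov equation $u(t,x)=\E\varphi(X_2^x(t))$. Applying Itô's formula to $u(T-t,X_1(t))$ gives
\begin{equation*}
\E\varphi(X_1(T))-\E\varphi(X_2(T))=\tfrac12\int_0^T\E\,\tr\big(D^2u(T-t,X_1(t))(Q_1-Q_2)\big)\,dt .
\end{equation*}
It then suffices to bound $\|(-A)^{1-\epsilon}D^2u(s,x)(-A)^{1-\epsilon}\|$ with a time singularity $s^{-(1-\epsilon')}$, which is integrable. Since $(Q_1-Q_2)$ is self-adjoint, this pairs exactly with $\|(-A)^{-(1-\epsilon)}(Q_1-Q_2)\|_{\calL_1}$, splitting the power of $(-A)$ between the two sides.

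Obtaining this smoothing of $D^2u$ requires estimates on the first and second variation processes $\eta^h=DX^x h$ and $\zeta^{h,k}$ with $h\in D((-A)^{-(1-\epsilon)})$, i.e. negative-regularity directions. I would prove $\|\eta^h(s)\|\lesssim s^{-(1-\epsilon)/2}\|(-A)^{-(1-\epsilon)/2}h\|$ times exponential moments, in the same way as in the strong step. This mild-formulation analysis of the variation equations with negative Sobolev data is the second main obstacle, again controlled by the exponential integrability of $\int\|\nabla X\|^2$.
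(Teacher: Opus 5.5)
\textbf{Strong bound: the Gronwall exponent.} Your decomposition is the paper's: your $e=Y_1-Y_2$ is exactly $-Z_M$ in the proof of Theorem~\ref{thm:strong perturbation}, where $\hat X=X^{Q_1}_M-Y_M$ and $Y_M$ is the Ornstein--Uhlenbeck process driven by $W^{Q_1}-W^{Q_2}$. Your $L^2$-estimate of $O_1-O_2$ in terms of $\big|Q_1^{\nicefrac12}-Q_2^{\nicefrac12}\big|$ is fine. The gap is the Gronwall step, which you flag as the main obstacle but do not resolve.

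Exponential moments of $\int_0^T\|\nabla X_i\|_{L^2}^2\,ds$ are only available with a small coefficient (Lemma~\ref{lem: X_M sup +exp moment bounds}(ii)). Neither of your proposed fixes gets around this:
\begin{itemize}
\item Polynomial $H^1$-moments of $X_0$ cannot control $\E\exp\big(C\int_0^T\|\nabla X_i\|^2_{L^2}\,ds\big)$.
\item Localising on $\{\int_0^T\|\nabla X_i\|^2_{L^2}\,ds\le R\}$ gives a trade-off $e^{CR}\eta+e^{-cR/(2r)}$, with $\eta$ the target rate. Optimising in $R$ only yields $\eta^{\theta}$ with $\theta<1$.
\item The $\|X_1\|^4_{L^\infty}$ you have multiplying $\|e\|^2$ would also land in the exponent. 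But $\int_0^T\|X_1\|^4_{L^\infty}\,ds\le\sup_t\|X_1\|^2_{L^2}\int_0^T\|\nabla X_1\|^2_{L^2}\,ds$ is not covered by any available exponential bound.
\end{itemize}
The missing idea is that the coefficient of $\|\nabla X_i\|^2$ can be made arbitrarily small at the price of a large \emph{deterministic} constant. With $g=\|\nabla(X_1+X_2)\|_{L^2}$,
\[
\tfrac12\langle e^2,\nabla(X_1+X_2)\rangle_{L^2}\le\tfrac12\|e\|_{L^2}\|e\|_{L^\infty}g\le\tfrac{1}{32\delta}\|e\|^2_{L^\infty}+2\delta g^2\|e\|^2_{L^2}\le\kappa(\delta)\|e\|^2_{L^2}+\tfrac12\|\nabla e\|^2_{L^2}+2\delta g^2\|e\|^2_{L^2},
\]
using $\|e\|^2_{L^\infty}\le\|e\|_{L^2}\|\nabla e\|_{L^2}$. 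The exponent becomes $\kappa(\delta)T+2\delta\int_0^T g^2\,ds$, with $\delta$ chosen small in terms of $\gamma_0$, $K_{\max}$ and the H\"older exponent. This is how term (C) is handled in the paper.

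\textbf{Strong bound: the forcing term.} An $H^{1/2-}$-norm of $w=O_1-O_2$ does not cost only an $\epsilon$. The bound $\E\|(-A)^{\beta}w(t)\|^2_{L^2}\le\int_0^t\|(-A)^{\beta+\frac12-\epsilon}e^{sA}\|^2_{\calL(L^2)}\,ds\,\big\|(-A)^{-(\frac12-\epsilon)}\big|Q_1^{\nicefrac12}-Q_2^{\nicefrac12}\big|\big\|^2_{\calL_2(L^2)}$ is finite only for $\beta<\epsilon$. Taking $\beta\approx\frac14$ therefore degrades the rate by a quarter power of $-A$. Instead, keep $w$ in $L^2$ and put the regularity on the solutions: $|\langle\nabla e,w(X_1+X_2)\rangle_{L^2}|\le\frac14\|\nabla e\|^2_{L^2}+\|w\|^2_{L^2}\|X_1+X_2\|^2_{L^\infty}$. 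Keep this as an additive forcing outside the exponential and apply H\"older with the $L^\infty$-moments of Lemma~\ref{lem:L_inf bound of X_M}; this is where $X_0\in D((-A)^{\alpha})$ with $\alpha>\frac14$ enters. With both changes, your pathwise Gronwall--H\"older argument is a valid substitute for Theorem~\ref{thm:Cor 2.11}.

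\textbf{Weak bound.} This follows the paper's route, with three corrections.
\begin{itemize}
\item You need $(-A)^{\frac{1-\epsilon}{2}}D^2u\,(-A)^{\frac{1-\epsilon}{2}}$, not $(-A)^{1-\epsilon}D^2u\,(-A)^{1-\epsilon}$, whose singularity $s^{-2(1-\epsilon)}$ is not integrable.
\item The symmetric split does not pair ``exactly'' with $\|(-A)^{-(1-\epsilon)}(Q_1-Q_2)\|_{\calL_1(L^2)}$; that needs an operator Cauchy--Schwarz inequality. It is simpler to put all the power on one side, $|\tr(D^2u\,S)|\le\|D^2u\,(-A)^{1-\epsilon}\|_{\calL(H_M)}\|(-A)^{-(1-\epsilon)}S\|_{\calL_1(L^2)}$, using Lemma~\ref{lem:D^2u bound} with $\beta=0$.
\item It\^o's formula for $u(T-t,X_1(t))$ is not directly available in infinite dimensions. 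The paper works at Galerkin level with bounds uniform in $M$ and depending on $Q$ only through $\tr(Q)\le K_{\max}$, then lets $M\to\infty$ via known Galerkin rates. That limit step, not your H\"older step, is the source of $p>32$ and $r<\frac p4$.
\end{itemize}
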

Note that the weak bound is roughly the square of the strong bound. This aligns with the general principle in numerical analysis for stochastic differential equations that states that the weak convergence rate is twice the strong rate. We believe the bound in~\eqref{eq:strong X^Q_1 vs X^Q_2 intro} to be essentially sharp (see Remark~\ref{rem:sharp strong bound}).

The main motivation for our work lies in the numerical approximation of the Burgers equation. More specifically, both weak and strong convergence rates for the Galerkin approximation of~\eqref{eq:Burgers_intro} are known, see~\cite{Brehier_Cox_Millet_2024,Hutzenthaler_Jentzen_2020}. These Galerkin approximations lead to finite-dimensional systems driven by finite-dimensional noise, where the covariance operator of the noise is given by $P_M Q_1 P_M$ (where $P_M$ is $M^{\textnormal{th}}$ Galerkin projection, i.e., the projection onto the first $M$ eigenvectors of the Laplacian). However, sampling from a $P_M Q_1 P_M$-Brownian motion is not straightforward due to the infinite-dimensional nature of $Q_1$. This becomes easier if we can replace $Q_1$ by a finite-dimensional operator, and Theorem~\ref{thm:strong perturbation intro} quantifies the error introduced in doing so -- see also Corollaries~\ref{cor: weak conv X X^{Q_N} X^{Q_N}_M} and~\ref{cor: strong conv X X^{Q_N} X^{Q_N}_M} below.\par \medskip

The question of approximating the noise goes back at least as far as~\cite{Brzezniak:1997}, where in Section 5 it is proven that approximating the noise in a semilinear equation by a finite-dimensional noise results in a strongly convergent sequence of solutions. In~\cite{KunzeNeerven:2011,KunzeNeerven:2012} the authors establish that the solution to a semilinear equation depends continuously on both the drift and the diffusion coefficient. Neither~\cite{Brzezniak:1997} nor~\cite{KunzeNeerven:2011, KunzeNeerven:2012} provide quantitative results, the first result in that direction seems to be in~\cite{Harms_Muller_2019}, where both weak and strong bounds are provided for noise approximation for hyperbolic equations with globally Lipschitz coefficients. As the equations considered in~\cite{Harms_Muller_2019} do not involve an analytic operator in the leading term, the obtained rates do not involve (and benefit from) negative powers of $A$. Finally, a powerful abstract perturbation result for stochastic differential equations with monotone coefficients was obtained in~\cite{Hutzenthaler_Jentzen_2020}, indeed, this abstract result is the key ingredient for obtaining the strong error bound~\eqref{eq:strong X^Q_1 vs X^Q_2 intro}. The weak error bound~\eqref{eq:weak X^Q_1 vs X^Q_2 intro} relies on regularity results for the Kolmogorov equation associated with the Burgers equation established in~\cite{Brehier_Cox_Millet_2024}. For practical purposes, we first show that the bounds~\eqref{eq:weak X^Q_1 vs X^Q_2 intro} and~\eqref{eq:strong X^Q_1 vs X^Q_2} hold for the Galerkin approximations of the Burgers equation, with constants independent of the Galerkin projection. This allows us to pass to the limit and ultimately obtain~\eqref{eq:weak X^Q_1 vs X^Q_2 intro} and~\eqref{eq:strong X^Q_1 vs X^Q_2}.

\subsection*{Outline}
Section~\ref{chapter 1} contains the preliminaries: a discussion of the setting, (exponential) moment bounds for the solution to the stochastic Burgers equation~\eqref{eq:Burgers_intro}, and the relevant regularity results for the associated Kolmogorov equation. Both the moment bounds and the regularity results are taken from~\cite{Brehier_Cox_Millet_2024}. However, as the dependence on the covariance operator $Q$ of the involved constants is not tracked in~\cite{Brehier_Cox_Millet_2024}, the parts of the proof where this is relevant are provided in Appendix~\ref{App A}. The weak bounds are provided in Section~\ref{sec:weak bound}, and the strong bounds in Section~\ref{sec: strong bound}.

\section{Preliminaries} \label{chapter 1}

\subsection{Notation} 
Throughout this work, the following notation is used: let $\N = \{1,2,...\}$.\par 
Throughout this section we assume we are given two separable real Hilbert spaces $(H,\inn{\cdot,\cdot}_H,\|\cdot\|_H)$ and $(U,\inn{\cdot,\cdot}_U,\|\cdot\|_U)$ be separable real Hilbert spaces. \par 
Let $(\mathcal{L}(H,U), \|\cdot\|_{\mathcal{L}(H,U)})$ denote the Banach space of bounded linear operators from $H$ to $U$, and set $\mathcal{L}(H) := \mathcal{L}(H,H)$. Similarly, we let $(\calL(H\times H,U), \|\cdot\|_{\mathcal{L}(H\times H,U)} )$ denote the Banach space of bounded bilinear operators from $H\times H$ to $U$, endowed with the norm $\| B \|_{\mathcal{L}(H\times H,U)} = \sup_{g,h\in H, \|g\|_H = \|h\|_H =1} \| B(g,h) \|_U$. Let $K(H)$ denote the space of compact operators on $H$, and for all $p\in [1,\infty)$ let 
$\calL_{p}(H)\subseteq{K}(H)$ be the Banach space of Schatten class operators on $H$; $\| A \|_{\calL_p(H)}^{p} = \sum_{\lambda \in \sigma(A^*A)} \lambda^{p/2}$ (we assume the reader is familiar with the spectral theorem for compact self-adjoint operators).
In particular, $\calL_2(H)$ is the space of Hilbert-Schmidt operators on $H$ and $\calL_1(H)$ is the space of trace-class operators on $H$. Recall that $\calL_2(H)$ is a (separable)  Hilbert space under the inner product 
$\langle A, B \rangle_{\calL_2(H)} = \sum_{n=1}^{\infty} \langle A h_n, B h_n \rangle_{H}$, where $(h_n)_{n\in \N}$ is an orthonormal basis for $H$ and the inner product does not depend on the choice of the orthonormal basis. Also recall that we have, for all $p\in [1,\infty)$ and all $A\in \calL_p(H), B,C\in \calL(H)$, that 
\begin{equation}\label{eq:ideal}
\| BAC \|_{\calL_p(H)} \leq \| B \|_{\calL(H)} \| A \|_{\calL_p(H)} \| C\|_{\calL(H)}.
\end{equation}
In addition, we recall that the trace of $A\in \calL_1(H)$ is defined by
\begin{equation}
 \tr(A) = \sum_{n\in \N} \langle Ah_n, h_n \rangle_{H} \in  \mathbb{\R} ,
\end{equation}
where $(h_n)_{n\in \N}$ is an orthonormal basis for $H$ (and the value of $\tr(A)$ is independent of the choice of the orthonormal basis).
For $S \in \mathcal{L}(H,U)$, let $S^*\in \calL(U,H)$ denote the adjoint of $S$. An operator $S \in \mathcal{L}(H)$ is called \emph{self-adjoint} if $S = S^*$, and it is called \emph{positive} if $\inn{Sx,x}_H \geq 0$ for all $x \in H$.\par  

Given a measure space $(S,\Sigma,\mu)$ and $p\in [1,\infty]$, denote by $L^p(S;H)$ the Bochner space of measurable functions\footnote{See~\cite[Chapter 1]{Hytonen_vanNeerven_Veraar_Weis_2016} for the definition of Bochner spaces, note that the notions of strong and weak measurability coincide as we assume $H$ to be separable.}, which is a Banach space when endowed with the norm
\begin{align}
    \|f\|_{L^p(S;H)} &= \left(\int_S \|f\|^p_H\, d\mu \right)^{\frac{1}{p}}, \quad p\in [1,\infty);\label{def:Bochner norm}
    & 
    \|f\|_{L^{\infty}(S;H)} &= \operatorname{ess\, sup}_{s\in S} f(s). 
\end{align}
We set $L^p(S)=L^p(S;\R)$ and $L^p=L^p(0,1)$, $p\in [1,\infty]$. 
For $k \in \N$ and $p \in [1,\infty]$, let $W^{k,p} \subset L^p$ denote the Sobolev space on $(0,1)$, as defined in \cite{Evans_1998}. Denote the weak derivative by $\nabla$. Let $W^{k,p}_0 \subset W^{k,p}$ be the closure of $C^\infty_c((0,1))$ in $W^{k,p}$. For future reference we recall Poincar\'e's inequality: for all $x \in W^{1,2}_0$, it holds that
\begin{align}
    \Ltwo{x} \leq \frac{1}{\sqrt{2}}\Ltwo{\nabla x}. \label{eq:poincare} 
\end{align} \par 
Given an interval $I \subset \R$, let $C^\infty_c(I)$ be the space of infinitely often continuously differentiable functions from $I$ to $\R$ with compact support in $I$. Moreover, given a Banach space $(X,\|\cdot\|_X)$ and $\mu \in (0,1]$, let $C^\mu(I,X)$ be the Hölder space of functions $f:I\to X$ for which the Hölder norm
\begin{align*}
    \|f\|_{C^\mu(I,X)} = \sup_{x\in I} \|f(x)\|_X + \sup_{\substack{x,y\in I\\ x\ne y}}\frac{\|f(x)-f(y)\|_X}{|x-y|^\mu}
\end{align*}
is finite.\par
Given a twice Fréchet differentiable function $f:H\to \R$, denote its first and second order derivatives at $x \in H$ by $Df(x)\colon H \ni h \mapsto Df(x).(h)$ and $D^2f(x)\colon H\times H \ni (g,h) \mapsto D^2f(x).(g,h)$.

\subsection{The setting for the 1D stochastic Burgers equation}
\label{ssec:setting}
The formal setting we use throughout this paper to describe the 1D stochastic Burgers equation with trace class noise is taken from~\cite[Section 2.3]{Brehier_Cox_Millet_2024}. We repeat this setting here for the reader's convenience. \par 

We let $T\in(0,\infty)$ denote the terminal time.

We let $A\colon W^{1,2}_0 \cap W^{2,2} \subseteq L^2 \rightarrow L^2$ denote the Dirichlet Laplace operator on $L^2$, i.e.,
\begin{equation}\label{eq:defLaplace}
A x = - \sum_{k\in \N} (\pi k)^2 \langle x, h_k\rangle_{L^2} h_k,\quad 
\forall x\in W^{2,2}\cap W^{1,2}_0,
\end{equation}
where $h_k=\sqrt{2}\sin(k\pi \cdot)$ for all $k\in\N$. The eigenvectors $\big(h_k\big)_{k\in\N}$ define a complete orthonormal system of $L^2$. Note that $A$ generates an analytic $C_0$-semigroup $(e^{tA})_{t\geq 0}$ on $L^2$. 

We let $B\colon W^{1,2}\times W^{1,2}\rightarrow L^1$ denote the bilinear operator defined by
\begin{equation}
    B[x_1,x_2]=x_1 \nabla x_2 + x_2 \nabla x_1,\quad \forall x_1,x_2\in W^{1,2}, 
\end{equation}
and we set $B(x)=B[x,x]$ for $x\in W^{1,2}$. Note that an integration by parts yields the identity $\langle B(x),x\rangle_{L^2} =0$ for all $x\in W^{1,2}$.

We fix a filtered probability space $(\Omega,\calF,\Prob,(\calF_t)_{t\in [0,T]})$, which is assumed to be large enough to allow for the existence of a sequence of independent standard $(\calF_t)_{t\in [0,T]}$-Brownian motions $(W^{(k)})_{k\in \N}$. Let $(\tilde{h}_k)_{k\in \N}$ be an orthonormal basis for $H$ (the choice of $(\tilde{h}_k)$ is irrelevant). Given a positive self-adjoint $Q\in \calL_1(L^2)$, we define the $Q$-Brownian motion $W^{Q}\colon [0,T]\rightarrow H$ to be given by
\begin{equation}\label{eq:defWQ}
W^{Q}(t) = \sum_{k\in \N}  W^{(k)}(t) Q^{\nicefrac{1}{2}} \tilde{h}_k
\end{equation}
(note that the distribution of $W^Q$ is independent of the choice of $(\tilde{h}_k)_{k\in \N}$). 

 It follows from~\cite[Theorem 1.1 and Remark 3.1]{Liu_Rockner_2010} (see also~\cite[Section 2.3]{Brehier_Cox_Millet_2024}) that for every $p\in [4,\infty)$, every positive self-adjoint $Q\in \calL_1(L^2)$, and every $\calF_0$-measurable $X_0 \in L^p(\Omega,L^2)$ there exists a unique continuous (up to versions) $(\calF_t)_{t\in [0,T]}$-adapted process $X^{Q}\colon [0,T]\times \Omega \rightarrow H$ such that $\Prob(X^{Q}(t)\in W^{1,2}_0)=1$ for all $t\in [0,T]$,
\begin{equation*}
\E\left[
 \sup_{t\in [0,T]} \|X^{Q}(t)\|_{L^2}^{p}+
\int_{0}^{T}\|\nabla X^{Q}(t)\|_{L^2}^2 \,dt\,
\right]<\infty,
\end{equation*}
and 
\begin{equation}\label{eq:Burgers}
X^{Q}(t) = X_0 + \int_0^{t} \big[ A X^{Q}(s) + B(X^{Q}(s)) \big]\,ds + W^{Q}(t), \quad \forall t\in [0,T].
\end{equation}

We refer to the process $X^{Q}$ as the \emph{solution to the Burgers equation driven by a $Q$-Brownian motion (with initial value $X_0$)}.\par

As explained in the introduction, our proofs rely heavily on the spectral Galerkin approximations of~\eqref{eq:Burgers}, which we shall now introduce. For $M\in\N$, set
\[
H_M = \operatorname{span}(\{h_1,\ldots,h_M\})\subseteq W^{1,2}_{0} \cap W^{2,2}
\]
and let
 $P_M\in \mathcal{L}(L^2)$ denote the orthogonal projection onto $H_M$.
 Define the linear operator $A_M\in \mathcal{L}(L^2,H_M)$ and the bilinear operator $B_M\colon L^2 \times L^2 \rightarrow H_M $
by $A_M = P_M A P_M $ ($=A P_M$) and 
\begin{equation}\label{eq:defB_M}
B_M[x_1,x_2]= P_M B[P_Mx_1, P_Mx_2], \quad \forall x_1,x_2 \in L^2.
\end{equation}
We set $B_M(x)= B_M[x,x]$ for all $x\in L^2$. Note that for $x_1,x_2,y\in H_M$ we have
\begin{align}
    \innt{y,B_M[x_1,x_2]} = \innt{y,B[x_1,x_2]} \label{B=B_M with inner products}.
\end{align}
It again follows from~\cite[Remark 3.1 and Theorem 1.1]{Liu_Rockner_2010} (see also~\cite[Section 2.3]{Brehier_Cox_Millet_2024}) that for every $p\in [4,\infty)$, every positive self-adjoint $Q\in \calL_1(L^2)$, and every $\calF_0$-measurable $X_0 \in L^p(\Omega,L^2)$ there exists a unique (up to versions) stochastic process $X_M^{Q}\colon [0,T]\times \Omega \rightarrow H_M$ such that
\begin{equation}\label{eq:Galerkin approx}
    X_M^{Q}(t) = P_M X_0 + 
    \int_{0}^{t}\big[ A_M X_M^{Q}(s) + B_M(X_M^{Q}(s))\big]\,ds + P_M W^Q(t),\quad \forall  t\in [0,T].
\end{equation} 
Moreover, the solution $X_M^{Q}$ can be written using the following  mild formulation: 
\begin{equation}\label{eq:mildGalsolBurgers}
    X_M^{Q}(t) = e^{tA} P_M X_0 + 
    \int_{0}^{t} e^{(t-s)A} B_M(X_M^{Q}(s))\,ds + 
    \int_0^t e^{(t-s)A} P_M\,dW^Q(s),\quad \forall  t\in [0,T].
\end{equation}
We refer to the processes $X_M^{Q}$, $M\in \N$, as the \emph{Galerkin approximations of $X^{Q}$}.

\subsection{Properties of the operators $A$ and $B$}
We list some frequently used properties of the operators $A, A_M, B$ and $B_M$, as introduced in Section~\ref{ssec:setting}.\par 
For any $\alpha \in [0,\infty)$ we can define the fractional powers $(-A)^{\alpha} \colon D((-A)^{\alpha}) \subseteq L^2 \rightarrow L^2$ of $-A$, by
\begin{equation*}
D((-A)^{\alpha}) = \Big\{ x \in L^2 \colon  \sum_{k\in \N} (\pi k)^{4\alpha} \langle x, h_k\rangle_{L^2}^2 < \infty\Big\} 
\end{equation*}
and
\begin{equation}\label{eq:def_fracpowA}
    (-A)^{\alpha}x = \sum_{k\in \N} (\pi k)^{2\alpha} \langle x, h_k\rangle_{L^2} h_k, \quad \forall  x\in D((-A)^{\alpha}).
\end{equation}
Furthermore, we define $(-A)^{-\alpha} \in \mathcal{L}(((-A)^{\alpha})^*,L^2)$ to be the adjoint of $A^{\alpha}$. Parseval's identity implies that
\begin{equation}\label{eq:A_fracpownorminc}
\| (-A)^{\alpha} x \|_{L^2} \leq  \| (-A)^{\beta} x \|_{L^2}
\end{equation} 
for all $\alpha,\beta\in (-\infty,\infty)$ satisfying $\alpha<\beta$ and all $x\in  D((-A)^{\beta\vee 0})$. \par 
Note that $(-A)^{-\alpha} \in \calL_1(L^2)$ and $(-A)^{-\nicefrac{\alpha}{2}} \in \calL_2(L^2)$ if and only if $\alpha > \frac{1}{2}$. Indeed,
\begin{align}
    \|(-A)^{-\nicefrac{\alpha}{2}}\|^2_{\calL_2(L^2)} = \|(-A)^{-\alpha}\|_{\calL_1(L^2)} = \sum_{k \in \N}\frac{1}{(\pi k)^{2\alpha}} < \infty \iff \alpha > \frac{1}{2}. \label{eq: A L_1 reg}
\end{align}
We recall the so-called smoothing property of the semigroup $\bigl(e^{tA}\bigr)_{t\ge 0}$, see e.g.~\cite[Chapter 2.6]{Pazy:1983}. In our setting the proof is elementary (see also~\cite[Lemma 2.2]{Brehier_Cox_Millet_2024}.

\begin{lemma}\label{lem:A_analytic}
For all $\alpha \in (0,\infty) $, $t\in (0,\infty)$, and $x\in L^2$ one has
\begin{equation}
\| (-A)^{\alpha}  e^{t A} x \|_{L^2} \leq e^{\alpha(\log(\alpha)-1)} t^{-\alpha} \| x \|_{L^2}.
\end{equation}
\end{lemma}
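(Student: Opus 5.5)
The plan is to diagonalise $-A$ in the orthonormal eigenbasis $(h_k)_{k\in\N}$ and reduce the operator bound to a scalar maximisation. By \eqref{eq:defLaplace} and \eqref{eq:def_fracpowA}, for $x\in L^2$ and $t>0$ we have
\begin{equation*}
(-A)^{\alpha} e^{tA} x = \sum_{k\in\N} (\pi k)^{2\alpha} e^{-t(\pi k)^2} \langle x, h_k\rangle_{L^2} h_k .
\end{equation*}
This series converges in $L^2$, since the coefficients $(\pi k)^{2\alpha}e^{-t(\pi k)^2}$ are bounded in $k$. In particular $e^{tA}x\in D((-A)^\alpha)$.

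Parseval's identity then gives
\begin{equation*}
\|(-A)^{\alpha}e^{tA}x\|_{L^2}^2 \le \Big(\sup_{k\in\N} (\pi k)^{2\alpha}e^{-t(\pi k)^2}\Big)^2 \|x\|_{L^2}^2 .
\end{equation*}
The only remaining step is to bound the scalar function $f(\lambda)=\lambda^{\alpha}e^{-t\lambda}$ on $\lambda\in(0,\infty)$, and then take $\lambda=(\pi k)^2$.

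To maximise $f$, differentiate: $f'(\lambda)=\lambda^{\alpha-1}e^{-t\lambda}(\alpha-t\lambda)$. So $f$ increases up to $\lambda^*=\alpha/t$ and decreases afterwards. Its maximum value is
\begin{equation*}
f(\lambda^*)=(\alpha/t)^{\alpha}e^{-\alpha}=e^{\alpha\log\alpha-\alpha}\,t^{-\alpha}=e^{\alpha(\log(\alpha)-1)}t^{-\alpha}.
\end{equation*}
This is exactly the claimed constant. Taking square roots in the Parseval estimate completes the proof.

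There is no genuine obstacle. The only point needing care is confirming that $e^{tA}x$ lies in the domain of $(-A)^\alpha$, so that the series manipulation is legitimate, and the boundedness of the coefficients already gives this.
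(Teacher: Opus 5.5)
Your proof is correct and is essentially the argument the paper intends: the paper omits the proof as elementary (pointing to \cite[Lemma 2.2]{Brehier_Cox_Millet_2024}), and the natural proof is exactly yours. You diagonalise in $(h_k)_{k\in\N}$, apply Parseval, and bound $\lambda\mapsto\lambda^{\alpha}e^{-t\lambda}$ by its maximum $(\alpha/t)^{\alpha}e^{-\alpha}=e^{\alpha(\log(\alpha)-1)}t^{-\alpha}$, attained at $\lambda=\alpha/t$.
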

The following simple version of the Sobolev embedding is rather straightforward to prove:
\begin{proposition}
    Let $\delta >0$. Then there exists $C_\delta>0$ such that 
    \begin{align}
        \|x\|_{L^\infty} \leq C_\delta\|(-A)^{\frac{1+\delta}{4}}x\|_{L^2}, \quad \forall  x \in D((-A)^{\frac{1+\delta}{4}}).\label{eq:Linf bound by L2}
    \end{align}
\end{proposition}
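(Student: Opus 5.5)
Expand $x$ in the eigenbasis, $x=\sum_{k\in\N}\langle x,h_k\rangle_{L^2} h_k$, and bound the $L^\infty$ norm termwise. Since each $h_k=\sqrt{2}\sin(k\pi\cdot)$ satisfies $\|h_k\|_{L^\infty}\le\sqrt{2}$, it suffices to control $\sum_k |\langle x,h_k\rangle_{L^2}|$. Write $|\langle x,h_k\rangle_{L^2}| = (\pi k)^{-\frac{1+\delta}{2}}\cdot (\pi k)^{\frac{1+\delta}{2}}|\langle x,h_k\rangle_{L^2}|$ and apply Cauchy--Schwarz:
\begin{equation*}
\sum_{k\in\N}|\langle x,h_k\rangle_{L^2}| \le \Big(\sum_{k\in\N}(\pi k)^{-(1+\delta)}\Big)^{\nicefrac12}\Big(\sum_{k\in\N}(\pi k)^{1+\delta}\langle x,h_k\rangle_{L^2}^2\Big)^{\nicefrac12}.
\end{equation*}
The first factor is finite because $1+\delta>1$; this is exactly \eqref{eq: A L_1 reg} with $\alpha=\frac{1+\delta}{2}>\frac12$. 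By \eqref{eq:def_fracpowA} and Parseval, the second factor equals $\|(-A)^{\frac{1+\delta}{4}}x\|_{L^2}$, since $(\pi k)^{4\cdot\frac{1+\delta}{4}}=(\pi k)^{1+\delta}$.

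It remains to justify that the series converges to $x$ in $L^\infty$ and not just in $L^2$. The bound above shows that $\sum_k \langle x,h_k\rangle_{L^2} h_k$ converges absolutely and uniformly on $[0,1]$, so its limit is continuous. This limit agrees almost everywhere with the $L^2$-limit $x$, because uniform convergence implies $L^2$ convergence on $(0,1)$. Hence $\|x\|_{L^\infty}\le \sqrt2\,\sum_k|\langle x,h_k\rangle_{L^2}|$, and the proposition follows with
\begin{equation*}
C_\delta=\sqrt{2}\Big(\sum_{k\in\N}(\pi k)^{-(1+\delta)}\Big)^{\nicefrac12}.
\end{equation*}
There is no real obstacle here. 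The only point needing care is this identification of the uniform limit with $x$ almost everywhere.
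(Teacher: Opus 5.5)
Your proposal is correct and follows the same argument as the paper: the bound $\|h_k\|_{L^\infty}\le\sqrt2$, Cauchy--Schwarz with the weights $(\pi k)^{\pm\frac{1+\delta}{2}}$, and the same constant $C_\delta=\sqrt2\big(\sum_{k}(\pi k)^{-(1+\delta)}\big)^{\nicefrac12}$. Your extra remark, that the uniformly convergent series coincides with $x$ almost everywhere, fills in a step the paper leaves implicit.
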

\begin{proof}
Recalling $|h_k(z)|\leq \sqrt{2}$ for all $z \in (0,1)$ and applying the Cauchy-Schwarz inequality we obtain
    \begin{align*}
        \|x\|_{L^\infty} &= \sup_{z \in [0,1]}\left|\sum_{k\in \N} \innt{x,h_k}h_k(z)\right| \leq \sqrt{2}\sum_{k\in \N} |\langle x,h_k\rangle_{L^2}| \\
        & \leq \sqrt{2}\left(\sum_{k\in \N} (\pi k)^{-(1+\delta)}\right)^{\frac{1}{2}}
        \left(\sum_{k \in \N}(\pi k)^{1+\delta}|\innt{x,h_k}|^2\right)^{\frac{1}{2}}
        = \sqrt{2}\left(\sum_{k\in \N} (\pi k)^{-(1+\delta)}\right)^{\frac{1}{2}} \| (-A)^{\frac{1+\delta}{4}} x \|_{L^2}.
    \end{align*}
\end{proof}
As for the operator $B_M$, using~\eqref{B=B_M with inner products} and integrating by parts we obtain that for $x,y \in H_M$ that 
    \begin{align}\label{eq:B_cancelations}
        \innt{x,B_M[x,y]} = \innt{x,B[x,y]} = -\tfrac{1}{2} \innt{y,B(x)} = -\tfrac{1}{2}\innt{y,B_M(x)}.
    \end{align} 

\subsection{Estimates for stochastic convolutions}
In this section we provide some bounds on the stochastic convolution with the semigroup $(e^{tA})_{t\in [0,\infty)}$ that we need to prove strong convergence rates. 
Let $Q\in \calL_1(L^2)$ be positive and self-adjoint. Consider
\begin{align}
    Y_M(t) = \int_0^t e^{(t-s)A}P_M\,dW^{Q}(s),\quad t\in [0,T]. \label{eq:stoch conv perturbation}
\end{align} 
Then $Y_M \in H_M$ and satisfies the evolution equation
\begin{align}
    Y_M(t) = \int_0^t AY_M(s)\, ds + P_M W^{Q}(t). \label{eq:Y_M evolution eq}
\end{align}
We next provide some regularity estimates for $Y_M$: 
\begin{proposition}
     \label{prop:stoch conv moment bound}
    Let $p > 0$ and $\alpha \in [0,\frac{1}{2})$. \!Then there exists $C_{\alpha,p} \in (0,\infty)$ such that for all $t\in [0,T], M \in \N$ it holds that 
    \begin{align*}
        \EE{\Ltwo{ Y_M(t)}^p} \leq C_{\alpha,p} T^{\nicefrac{(1-2\alpha)p}{2}}
        \| (-A)^{-\alpha}  Q^{\nicefrac{1}{2}} \|_{\calL_2(L^2)}^p.
    \end{align*}
\end{proposition}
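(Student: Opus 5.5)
The plan is to use that $Y_M(t)$ is a centered Gaussian random variable in the finite-dimensional space $H_M$. For Gaussians all moments are controlled by the second moment, so it suffices to bound $\EE{\Ltwo{Y_M(t)}^2}$ and then upgrade to arbitrary $p>0$.

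\textbf{Step 1 (second moment).} By It\^o's isometry,
\begin{align*}
\EE{\Ltwo{Y_M(t)}^2} = \int_0^t \| e^{(t-s)A} P_M Q^{\nicefrac12}\|_{\calL_2(L^2)}^2\,ds .
\end{align*}
We write $e^{(t-s)A}P_M Q^{\nicefrac12} = (-A)^{\alpha} e^{(t-s)A} P_M \,(-A)^{-\alpha} Q^{\nicefrac12}$. This uses that $P_M$ commutes with all functions of $A$, since they share the eigenbasis $(h_k)$, and that on $H_M$ all fractional powers are bounded. The ideal property~\eqref{eq:ideal}, the bound $\|P_M\|_{\calL(L^2)}\le 1$ and Lemma~\ref{lem:A_analytic} (for $\alpha>0$; for $\alpha=0$ use contractivity of the semigroup) then give
\begin{align*}
\EE{\Ltwo{Y_M(t)}^2} \le e^{2\alpha(\log\alpha-1)} \int_0^t (t-s)^{-2\alpha}\,ds\, \|(-A)^{-\alpha}Q^{\nicefrac12}\|_{\calL_2(L^2)}^2 = \frac{e^{2\alpha(\log\alpha-1)}}{1-2\alpha}\, t^{1-2\alpha}\|(-A)^{-\alpha}Q^{\nicefrac12}\|_{\calL_2(L^2)}^2 .
\end{align*}
The integral is finite precisely because $\alpha<\tfrac12$. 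Bounding $t^{1-2\alpha}\le T^{1-2\alpha}$ gives the estimate for $p=2$.

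\textbf{Step 2 (all moments).} Since $Y_M(t)$ is a Gaussian random variable in a Hilbert space, Fernique's theorem (or simply the Kahane--Khintchine inequality for Gaussian sums) yields a constant $K_p$, depending only on $p$, with $\EE{\Ltwo{Y_M(t)}^p}\le K_p \big(\EE{\Ltwo{Y_M(t)}^2}\big)^{p/2}$ for $p\ge 2$.

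If one prefers an elementary argument, expand $Y_M(t)=\sum_{k\le M}\xi_k h_k$ with jointly Gaussian real $\xi_k$. Apply Minkowski's inequality in $L^{p/2}(\Omega)$ to $\sum_k \xi_k^2$ and use $\|\xi_k\|_{L^p(\Omega)}\le c_p\|\xi_k\|_{L^2(\Omega)}$ for centered real Gaussians. This gives $K_p=c_p^p$, independent of $M$.

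For $p\in(0,2)$, Jensen's inequality gives the bound directly from the $p=2$ case. Combining the two steps yields the claim with $C_{\alpha,p} = K_p\, \big(e^{2\alpha(\log\alpha-1)}/(1-2\alpha)\big)^{p/2}$, which is uniform in $t\in[0,T]$ and $M\in\N$.

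There is no real obstacle. The only points requiring care are the commutation of $P_M$ with $(-A)^{\pm\alpha}$ and $e^{tA}$, and making sure the constant in Step 2 does not depend on $M$; the Minkowski argument above ensures the latter.
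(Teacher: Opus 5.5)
Your proof is correct. Its core deterministic estimate is the same as the paper's: the factorization $e^{(t-s)A}P_MQ^{1/2} = (-A)^{\alpha}e^{(t-s)A}P_M\,(-A)^{-\alpha}Q^{1/2}$, the ideal property, Lemma~\ref{lem:A_analytic}, and integrability of $(t-s)^{-2\alpha}$ for $\alpha<\frac12$.

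The difference is in how you pass to $p$-th moments. The paper applies the Burkholder--Davis--Gundy inequality directly, to the martingale $r\mapsto\int_0^r e^{(t-s)A}P_M\,dW^Q(s)$ on $[0,t]$ evaluated at $r=t$. This gives $\E\|Y_M(t)\|_{L^2}^p \le C_p\big(\int_0^t\|e^{(t-s)A}P_MQ^{1/2}\|_{\calL_2(L^2)}^2\,ds\big)^{p/2}$ for every $p>0$ in a single step. You instead compute the second moment by It\^o's isometry. You then use that $Y_M(t)$ is a centered Gaussian vector, because the integrand is deterministic, to compare $L^p$ and $L^2$ norms, with Jensen for $p<2$.

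Your route buys three things. It is more elementary and gives an explicit constant $c_p^p$. Your Minkowski argument correctly needs neither independence of the $\xi_k$ nor any dependence on $M$. It also sidesteps the small point that $t\mapsto Y_M(t)$ is not itself a martingale.

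The BDG route, in exchange, does not use Gaussianity at all. It would therefore carry over verbatim to random predictable integrands, for instance multiplicative noise, where your Step~2 is unavailable.
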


\begin{proof}
    From~\eqref{eq:stoch conv perturbation}, the Burkholder-Davis-Gundy inequality, the ideal property of $\calL_2(L^2)$ (see~\eqref{eq:ideal}), and Lemma~\ref{lem:A_analytic} we obtain (for $\alpha\in (0,\frac{1}{2})$):
\begin{align*}
    \EE{\Ltwo{ Y_M(t)}^p} 
    & \leq C_{p}
    \left|
        \int_0^{t} \| e^{(t-s)A} P_M Q^{\nicefrac{1}{2}} \|_{\calL_2(L^2)}^2 \,ds 
    \right|^{\nicefrac{p}{2}}
    \\ & \leq C_{p} e^{\alpha p (\log(\alpha) -1)}
    \left|
        \int_0^{t} (t-s)^{-2\alpha}  \,ds 
    \right|^{\nicefrac{p}{2}} \| (-A)^{-\alpha} P_M Q^{\nicefrac{1}{2}} \|_{\calL_2(L^2)}^{p}.
\end{align*}
For $\alpha\in (0,\frac{1}{2})$, the result now follows by observing that 
\begin{equation}
\| (-A)^{-\alpha} P_M Q^{\nicefrac{1}{2}} \|_{\calL_2(L^2)}
= \| P_M (-A)^{-\alpha}  Q^{\nicefrac{1}{2}} \|_{\calL_2(L^2)} 
\leq \| (-A)^{-\alpha}  Q^{\nicefrac{1}{2}} \|_{\calL_2(L^2)},
\end{equation}
where again we used the ideal property and the fact that $\|P_M\|_{\mathcal{L}(L^2)} = 1$. For $\alpha=0$ the reasoning is analogous, using $\| e^{tA} \|_{\calL(L^2)}\leq 1$.
\end{proof}

We will also need an $L^\infty$ bound for $Y_M$:
\begin{lemma} \label{lem:Y Linf bound}
    Let $p \geq 1$. Then there exists $C_{p,T} \in (0,\infty)$ such that for all $M \in \N$, it holds that 
    \begin{align}
    \EE{\sup_{t\in [0,T]}\|Y_M(t)\|^p_{L^\infty}} \leq C_{p,T}\tr(Q)^{\nicefrac{p}{2}}. \label{eq:Y Linf}
\end{align}
\end{lemma}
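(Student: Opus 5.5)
We need $\mathbb{E}\sup_{t\in[0,T]}\|Y_M(t)\|_{L^\infty}^p \le C_{p,T}\,\tr(Q)^{p/2}$ with $C_{p,T}$ independent of $M$ and $Q$. We use the factorization method of Da Prato--Kwapień--Zabczyk together with the Sobolev embedding~\eqref{eq:Linf bound by L2}. By Hölder's inequality it suffices to treat large $p$. Fix $\delta\in(0,1)$ small and $\beta\in(0,\tfrac12)$ with $\beta>\tfrac1p+\tfrac{1+\delta}{4}$ (possible once $p$ is large and $\delta$ small). Using $\int_s^t (t-r)^{\beta-1}(r-s)^{-\beta}\,dr=\tfrac{\pi}{\sin(\pi\beta)}$ and the stochastic Fubini theorem, we write
\begin{equation*}
Y_M(t)=\frac{\sin(\pi\beta)}{\pi}\int_0^t (t-r)^{\beta-1}e^{(t-r)A}Z_M(r)\,dr,\qquad Z_M(r)=\int_0^r (r-s)^{-\beta}e^{(r-s)A}P_M\,dW^Q(s).
\end{equation*}

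First, bound $Z_M$ in $L^2$ only. By the Burkholder--Davis--Gundy inequality (Gaussianity suffices), $\|e^{tA}\|_{\calL(L^2)}\le1$, $\|P_M\|\le1$, and~\eqref{eq:ideal},
\begin{equation*}
\EE{\|Z_M(r)\|_{L^2}^p}\le C_p\Big(\int_0^r (r-s)^{-2\beta}\,\|Q^{\nicefrac12}\|_{\calL_2(L^2)}^2\,ds\Big)^{\nicefrac p2}\le C_{p,\beta}T^{\nicefrac{(1-2\beta)p}{2}}\tr(Q)^{\nicefrac p2}.
\end{equation*}
Here we used $\|Q^{1/2}\|_{\calL_2}^2=\tr(Q)$ and $\beta<\tfrac12$. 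Integrating over $r$ gives $\E\int_0^T\|Z_M(r)\|_{L^2}^p\,dr\le C\tr(Q)^{p/2}$, uniformly in $M$.

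Second, transfer this to a supremum in $L^\infty$ deterministically. Set $\alpha=\tfrac{1+\delta}{4}$. By~\eqref{eq:Linf bound by L2} and Lemma~\ref{lem:A_analytic},
\begin{equation*}
\|Y_M(t)\|_{L^\infty}\le C_\delta\int_0^t (t-r)^{\beta-1-\alpha}\|Z_M(r)\|_{L^2}\,dr\le C\Big(\int_0^T u^{(\beta-1-\alpha)p'}\,du\Big)^{\nicefrac1{p'}}\|Z_M\|_{L^p(0,T;L^2)},
\end{equation*}
where Hölder's inequality was applied with the conjugate exponent $p'$. The $u$-integral is finite exactly when $(\beta-1-\alpha)p'>-1$, i.e.\ $\beta>\alpha+\tfrac1p$, which is our choice of parameters. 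The right-hand side is independent of $t$, so taking the supremum over $t$, then the $p$-th power and the expectation yields~\eqref{eq:Y Linf}. Every constant depends only on $p,T,\beta,\delta$ and not on $M$ or $Q$, since $P_M$ was removed by contractivity.

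The step needing the most care is the parameter bookkeeping: the singular kernel must remain integrable after absorbing the $(-A)^{\alpha}$ loss with $\alpha>\tfrac14$, while $\beta<\tfrac12$ is needed for the $Z_M$ bound. This forces $p>\tfrac{4}{1-\delta}$, i.e.\ roughly $p>4$, which is why we reduce small $p$ to large $p$ via Jensen's inequality. A secondary point is justifying the factorization identity, which is routine here because $Y_M$ is finite-dimensional.
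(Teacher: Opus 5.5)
Your argument is correct, but it takes a different route from the paper.

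The paper's proof is essentially a reduction to a cited estimate. Via \eqref{eq:Linf bound by L2} it bounds $\sup_{t\in[0,T]}\|Y_M(t)\|_{L^\infty}$ by $C_{\lambda,\mu}\|(-A)^{\lambda}Y_M\|_{C^{\mu}([0,T],L^2)}$ for some $\lambda>\frac14$. It then invokes \cite[Lemma A.1]{Brehier_Cox_Millet_2024}, which bounds $\E\|(-A)^{\lambda}Y_M\|^p_{C^{\mu}([0,T],L^2)}$ by $C_{\lambda,\mu,p,T}\tr(Q)^{p/2}$ whenever $\lambda+\mu<\frac12$, with the choice $\lambda=\frac{5}{16}$, $\mu=\frac{1}{16}$.

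You instead prove the required bound from scratch with the factorization method, controlling only the supremum rather than a H\"older norm. Your parameter bookkeeping is sound: $\beta<\frac12$ for the moment bound on $Z_M$, and $\beta>\frac{1+\delta}{4}+\frac1p$ for the $L^{p'}$-integrability of $u^{\beta-1-\alpha}$, hence $p>4$. The Jensen reduction for smaller $p$ preserves the $\tr(Q)^{p/2}$ scaling. Removing $P_M$ and $e^{tA}$ by contractivity, together with the identity $\|Q^{1/2}\|_{\calL_2(L^2)}^2=\tr(Q)$, is also correct.

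The trade-off is as follows. The paper's version is shorter, and through the cited lemma it also gives time-H\"older regularity of $(-A)^{\lambda}Y_M$, which it reuses in the proof of Lemma~\ref{lem:(-A)X_M bound}. Yours is self-contained. It shows directly that the constant is independent of $M$ and depends on $Q$ only through $\tr(Q)^{p/2}$, which is precisely what the paper needs to track, instead of inheriting this from a reference.

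One detail is worth stating explicitly: before taking the supremum, the pathwise bound must hold for all $t$ simultaneously. This is fine. The factorized expression is a.s.\ continuous in $t$, because $\beta>\frac1p$ and $Z_M\in L^p(0,T;H_M)$ a.s. Since $Y_M$ is also continuous, the two processes are indistinguishable.
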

\begin{proof}
     By \eqref{eq:Linf bound by L2}, for all $\lambda >\frac{1}{4}$ there exists $C_{\lambda}$ such that
\begin{align*}
    \|Y_M(t)\|_{L^\infty} \leq C_{\lambda}\|(-A)^{\lambda}Y_M(t)\|_{L^2},
\end{align*}
and hence for all $\mu > 0$ one has
\begin{align}
    \sup_{t\in [0,T]}\|Y_M(t)\|_{L^\infty} \leq C_{\lambda,\mu}\|(-A)^{\lambda}Y_M\|_{C^{\mu}([0,T],L^2)}. \label{eq:Linf ineq 1}
\end{align}
Moreover, for all $M\in \N$, $\lambda,\mu > 0$ with $\lambda +\mu < \frac{1}{2}$ and $p \geq 1$, \cite[Lemma A.1]{Brehier_Cox_Millet_2024} states that there exists $C_{\lambda,\mu,p,T} > 0$ such that
\begin{align}
     \EE{\|(-A)^{\lambda}Y_M\|^{p}_{C^{\mu}([0,T],L^2)}}\leq C_{\lambda,\mu,p,T}\tr (Q)^{\nicefrac{p}{2}}. \label{eq:Linf ineq 2}
\end{align}
Choosing, say, $\lambda = \frac{5}{16}, \mu = \frac{1}{16}$ for both inequalities \eqref{eq:Linf ineq 1} and \eqref{eq:Linf ineq 2} and combining them finishes the proof.
\end{proof}
In addition, we have the following exponential estimate:
\begin{lemma} \label{lem:Y exp bound}
    For all $\alpha \in \left(0,\frac{1}{2\|Q\|_{\mathcal{L}(L^2)}}\right), M\in \N$, it holds that
\begin{align}
    \EE{\exp\left(\alpha \sup_{t\in [0,T]}\Ltwo{Y_M(t)}^2+\alpha\int_0^T \Ltwo{\nabla Y_M(s)}^2 ds\right)} &\leq  2e^{\alpha T \tr (Q)}. \label{eq:Y exp bound}
\end{align}
\end{lemma}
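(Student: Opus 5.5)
We apply Itô's formula to $F(y) = \alpha\|y\|_{L^2}^2$ along the finite-dimensional process $Y_M$, which is an $H_M$-valued semimartingale by \eqref{eq:Y_M evolution eq}. Since $\langle y, Ay\rangle_{L^2} = -\|\nabla y\|_{L^2}^2$ for $y\in H_M$, this gives
\begin{align*}
\alpha\|Y_M(t)\|_{L^2}^2 + 2\alpha\int_0^t \|\nabla Y_M(s)\|_{L^2}^2\,ds = \alpha t\,\tr(P_M Q P_M) + M_t,
\end{align*}
where $M_t = 2\alpha\int_0^t \langle Y_M(s), P_M\,dW^Q(s)\rangle_{L^2}$. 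The trace term satisfies $\tr(P_MQP_M)\le \tr(Q)$. The quadratic variation of the martingale is
\begin{align*}
\langle M\rangle_t = 4\alpha^2\int_0^t \|Q^{\nicefrac12}P_M Y_M(s)\|_{L^2}^2\,ds \le 4\alpha^2\|Q\|_{\mathcal{L}(L^2)}\int_0^t \|Y_M(s)\|_{L^2}^2\,ds.
\end{align*}

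Next we compare this quadratic variation with the dissipation term. By Poincar\'e \eqref{eq:poincare}, $\|Y_M\|_{L^2}^2 \le \tfrac12\|\nabla Y_M\|_{L^2}^2$, so
\begin{align*}
\tfrac12\langle M\rangle_t \le \alpha^2\|Q\|_{\mathcal{L}(L^2)}\int_0^t\|\nabla Y_M\|_{L^2}^2\,ds \le \tfrac{\alpha}{2}\int_0^t\|\nabla Y_M\|_{L^2}^2\,ds.
\end{align*}
The last inequality uses $\alpha\|Q\|_{\mathcal{L}(L^2)} < \tfrac12$. Write $Z_t = \alpha\|Y_M(t)\|_{L^2}^2 + \alpha\int_0^t\|\nabla Y_M\|_{L^2}^2\,ds$. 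Then
\begin{align*}
Z_t \le \alpha t\,\tr(Q) + M_t - \tfrac12\langle M\rangle_t,
\end{align*}
because the remaining $\alpha\int_0^t\|\nabla Y_M\|^2$ absorbs $\tfrac12\langle M\rangle_t$. The process $\mathcal{E}_t = \exp(M_t - \tfrac12\langle M\rangle_t)$ is a positive local martingale, hence a supermartingale with $\E[\mathcal{E}_t]\le 1$.

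The supremum in time is the main obstacle, since the bound above is pointwise in $t$. We handle it with Doob's maximal inequality for nonnegative supermartingales (Ville's inequality): $\Prob(\sup_{t\le T}\mathcal{E}_t \ge \lambda)\le \lambda^{-1}$. Combined with $\sup_t Z_t \le \alpha T\tr(Q) + \sup_t \log\mathcal{E}_t$, this gives $\Prob(\sup_t e^{Z_t} \ge e^{\alpha T\tr Q}\lambda) \le \lambda^{-1}$. This tail bound alone only yields a weak-$L^1$ estimate. To get a genuine expectation bound we run the argument at a slightly larger exponent. Choose $\alpha' = q\alpha$ with $q>1$ and still $\alpha'\|Q\|<\tfrac12$, which is possible because $\alpha\|Q\|<\tfrac12$ is strict. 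Then $\sup_t e^{Z_t}$ has tail $\Prob(\,\cdot\ge s)\le (e^{\alpha T \tr Q}/s)^{q}$. Integrating this tail gives
\begin{align*}
\E\big[\sup_t e^{Z_t}\big] \le \tfrac{q}{q-1}\,e^{\alpha T\tr(Q)}.
\end{align*}
Choosing $q=2$ yields the constant $2$, provided $2\alpha\|Q\|<\tfrac12$.

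If the constant $2$ must hold on the full range $\alpha\|Q\|<\tfrac12$, the plan changes at this step. The finer route is to keep a margin in the Poincar\'e absorption, using $\|Y\|^2\le \tfrac12\|\nabla Y\|^2$ with $2\pi^2$ in place of $2$ via the first eigenvalue $\pi^2$. Then $\langle M\rangle$ is dominated by a small multiple of the dissipation, which leaves room to use $\exp(2M - 2\langle M\rangle)$, a supermartingale. Applying Cauchy--Schwarz and Doob to the supremum in the form $\sup_t e^{Z_t}\le e^{\alpha T \tr Q}\sup_t \mathcal{E}_t$, together with the maximal inequality $\E\sup_t\mathcal{E}_t \le 2$, should give exactly the factor $2$. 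Here $\E\sup_t\mathcal{E}_t\le 2$ is obtained by integrating Ville's bound after truncation at level $1$, possibly with an $L\log L$ refinement. This bookkeeping on the constant is the delicate part of the proof.

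Finally, since the sum defining $Z$ has $\sup_t Z_t \ge \alpha\sup_t\|Y_M\|^2$ and $\sup_t Z_t \ge \alpha\int_0^T\|\nabla Y_M\|^2$, a bound on $\E\exp(\sup_t Z_t)$ does not immediately control the exponential of the sum of these two terms in the statement. Instead we bound the left-hand side of \eqref{eq:Y exp bound} by $\E[\sup_t e^{\alpha\|Y_M(t)\|^2 + \alpha\int_0^T\|\nabla Y_M\|^2}]$. We reuse the unused half of the dissipation $2\alpha\int\|\nabla Y\|^2$ from the Itô identity to include the full integral up to $T$: we apply the argument with $t=T$ for the integral term and with the supremum handled as above.
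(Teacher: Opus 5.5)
There is a genuine gap, and it sits exactly at the step you flagged as delicate. Your route is the paper's route: It\^o for $\alpha\|Y_M\|_{L^2}^2$, Poincar\'e to dominate $\langle M\rangle$ by the dissipation, an exponential supermartingale, Ville's inequality, and integrating a tail that decays faster than $1/s$. As written, however, it gives the constant $2$ only for $\alpha<\frac{1}{4\|Q\|_{\mathcal{L}(L^2)}}$, and your repair for the remaining range does not work.

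The loss is in the absorption step. Your own estimate reads $\langle M\rangle_t\le 2\alpha^2\|Q\|_{\mathcal{L}(L^2)}\int_0^t\|\nabla Y_M\|_{L^2}^2\,ds\le\alpha\int_0^t\|\nabla Y_M\|_{L^2}^2\,ds$. So the second copy of the dissipation dominates the \emph{whole} quadratic variation, not only half of it. Hence
\[
Z_t\le \alpha t\,\tr(Q)+M_t-\langle M\rangle_t=\alpha t\,\tr(Q)+\tfrac12\log\mathcal{E}(2M)_t,\qquad \mathcal{E}(2M)_t:=\exp\big(2M_t-2\langle M\rangle_t\big).
\]
Ville's inequality for the positive supermartingale $\mathcal{E}(2M)$ then gives $\Prob(\sup_t e^{Z_t}\ge s)\le (e^{\alpha T\tr(Q)}/s)^2$. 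Integrating this tail yields $\E[\sup_t e^{Z_t}]\le 2e^{\alpha T\tr(Q)}$ on the full range $\alpha\|Q\|_{\mathcal{L}(L^2)}<\frac12$. Neither the rescaling $\alpha\mapsto q\alpha$ nor the sharp Poincar\'e constant is needed. This is what the paper does, with the even larger exponent $c(\alpha)=1/(\alpha\|Q\|_{\mathcal{L}(L^2)})>2$ applied to $\hat N=\frac{2}{\|Q\|_{\mathcal{L}(L^2)}}N$, which gives the tail $e^{-c(\alpha)K}\le e^{-2K}$ for $Z_\alpha$.

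Your fallback instead asserts $\E\sup_t\mathcal{E}_t\le 2$ for $\mathcal{E}=\exp(M-\frac12\langle M\rangle)$, obtained by integrating Ville's bound. That fails: Ville only gives the tail $1/\lambda$, which is not integrable, and an $L\log L$ refinement would give a constant depending on $T$ and $Q$, not $2$. The bound $\E\sup_t\le 2$ does hold for $\exp(M-\langle M\rangle)=\mathcal{E}(2M)^{1/2}$, which is exactly the object above.

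Concerning your last paragraph, you are right that $\sup_t Z_t$ controls $\sup_t\big(\alpha\|Y_M(t)\|_{L^2}^2+\alpha\int_0^t\|\nabla Y_M\|_{L^2}^2\,ds\big)$, and not $\alpha\sup_t\|Y_M(t)\|_{L^2}^2+\alpha\int_0^T\|\nabla Y_M\|_{L^2}^2\,ds$. The paper's proof passes over this too: it simply takes the supremum in the energy identity. So what it really establishes is $\E[\exp(\sup_tZ_t)]\le 2e^{\alpha T\tr(Q)}$. That suffices for its only use, the estimate of $(C)$ in the proof of Theorem~\ref{thm:strong perturbation}, where only $\alpha\int_0^T\|\nabla Y_M\|_{L^2}^2\,ds\le Z_T$ enters.

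Your patch, however, is not an argument. The ``unused'' half of the dissipation is what compensates $\langle M\rangle$ (entirely so as $\alpha\|Q\|_{\mathcal{L}(L^2)}\uparrow\frac12$). In any case it lives on $[0,t]$, whereas you need $\alpha\int_t^T\|\nabla Y_M\|_{L^2}^2\,ds$. Extracting that term from the It\^o identity on $[t,T]$ puts $\alpha\|Y_M(t)\|_{L^2}^2$ back on the right-hand side, which amounts to doubling the exponent. The honest fix is
\[
\alpha\sup_t\|Y_M(t)\|_{L^2}^2+\alpha\int_0^T\|\nabla Y_M\|_{L^2}^2\,ds\le\sup_tZ_t+Z_T\le 2\sup_tZ_t .
\]
Combined with the argument above run at $2\alpha$ (tail exponent $1/(2\alpha\|Q\|_{\mathcal{L}(L^2)})>1$), this gives the stated quantity with constant $(1-2\alpha\|Q\|_{\mathcal{L}(L^2)})^{-1}e^{2\alpha T\tr(Q)}$ instead of $2e^{\alpha T\tr(Q)}$.
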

See Appendix~\ref{App A} for a proof.

\subsection{Moment bounds}\label{ssec:bounds}
In this section we present some relevant moment bounds for $X_M^Q$, i.e., for the Galerkin approximation of the solution to the Burgers equation driven by a $Q$-Brownian motion (see Section~\ref{ssec:setting}, in particular equation~\eqref{eq:Galerkin approx}) These moment bounds have been established in \cite[Section 3]{Brehier_Cox_Millet_2024}. The only difference between the results stated here and those in \cite[Section 3]{Brehier_Cox_Millet_2024} is that we are more precise about the dependence on $Q$ for the constants involved, which is crucial in our setting because we need to consider bounds for both $X^{Q_1}_M$ and $X^{Q_2}_M$.\par  
Here, we only state the results. In Appendix~\ref{App A} we provide proofs for those bounds where the dependency on $Q$ is not made explicit in~\cite[Appendix A]{Brehier_Cox_Millet_2024}.
\begin{lemma} \label{lem: X_M sup +exp moment bounds}
(i) Suppose there exists $p \geq 4$ such that $\E \left[\Ltwo{X_0}^p\right] < \infty$. Then there exists an increasing function $F_{p,T}:[0,\infty) \to (0,\infty)$ (dependent only on $p$ and $T$) such that for all positive self-adjoint $Q\in \calL_1(L^2)$ we have
\begin{align}
    \sup_{M\in \N} \E\left[\sup_{t \in [0,T]}\bigLtwo{X_M^Q(t)}^p + 2p\int_0^T \bigLtwo{X_M^Q(t)}^{p-2}\bigLtwo{\nabla X_M^Q(t)}^2\,dt\right] \leq F_{p,T}(\tr(Q))\left(\E\left[\Ltwo{X_0}^p\right]+1\right) .\label{eq:X_M p moment bound}
\end{align}
(ii) If there exists $\gamma_0 >0$ such that $\E\left[\exp (\gamma_0\|X_0\|_{L^2}^2)\right] < \infty$, then for all positive self-adjoint $Q\in \calL_1(L^2)$ and all $\beta \in \left(0,\frac{\gamma_0}{1+2\gamma_0\|Q\|_{\mathcal{L}(L^2)}}\right)$ we have
\begin{align}
     \sup_{M\in \N} \mathbb{E}\bigg[\exp\Big(\beta\sup_{t \in [0,T]}\|X_M^Q(t)\|_{L^2}^2 + \beta \int_0^T \|\nabla X_M^Q(t)\|_{L^2}^2 \,dt\Big)\bigg]\leq 2e^{\beta T  \tr(Q)}\E\left[\exp(\gamma_0\|X_0\|_{L^2}^2)\right]^{\frac{\beta}{\gamma_0}}. \label{eq:X_M exp bound random initial}
\end{align}
In particular, if $X_0 = x_0 \in L^2$ is a deterministic initial condition, \!then for all positive self-adjoint $Q\in \calL_1(L^2)$, $\beta \in \left(0,\frac{1}{2\|Q\|_{\mathcal{L}(L^2)}}\right)$,
\begin{align}
     \sup_{M\in \N} \mathbb{E}\bigg[\exp\bigg(\beta\sup_{t \in [0,T]}\bigLtwo{X_M^Q(t)}^2 + \beta \int_0^T \bigLtwo{\nabla X_M^Q(t)}^2\,dt\bigg)\bigg] \leq 2e^{\beta T  \tr(Q)}\exp(\beta \Ltwo{x_0}^2). \label{eq:X_M exp bound deterministic initial}
\end{align}    
\end{lemma}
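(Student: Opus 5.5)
We split the Galerkin solution into its stochastic convolution and a random-PDE remainder, and track how every constant depends on $Q$. Let $Y_M$ be as in~\eqref{eq:stoch conv perturbation} (with $Q$) and set $Z_M := X_M^Q - Y_M$. Then $Z_M$ is $H_M$-valued, pathwise $C^1$, and solves
\begin{equation*}
\tfrac{d}{dt} Z_M = A_M Z_M + B_M(Z_M + Y_M), \qquad Z_M(0) = P_M X_0 .
\end{equation*}
Both parts of the lemma then reduce to estimates on $Z_M$ together with the bounds on $Y_M$ in Proposition~\ref{prop:stoch conv moment bound}, Lemma~\ref{lem:Y Linf bound} and Lemma~\ref{lem:Y exp bound}. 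Those bounds depend on $Q$ only through $\tr(Q)$ and $\|Q\|_{\calL(L^2)}\le \tr(Q)$, which is how we keep the $Q$-dependence explicit.

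For (i), we would rather apply Itô's formula directly to $\|X_M^Q\|_{L^2}^p$. By~\eqref{eq:B_cancelations} (with $y=x$), $\innt{x,B_M(x)}=0$ for $x\in H_M$. Since $\innt{x,A_Mx}=-\|\nabla x\|_{L^2}^2$, we get
\begin{equation*}
d\|X\|^p = -p\|X\|^{p-2}\|\nabla X\|^2\,dt + \tfrac p2\|X\|^{p-2}\tr(P_MQP_M)\,dt + \tfrac{p(p-2)}{2}\|X\|^{p-4}\|Q^{1/2}P_MX\|^2\,dt + p\|X\|^{p-2}\innt{X,P_M\,dW^Q}.
\end{equation*}
The Itô correction terms are bounded by $C_p\tr(Q)\|X\|^{p-2}$, and Young's inequality gives $C_p\tr(Q)\|X\|^{p-2}\le C_p(\|X\|^p+\tr(Q)^{p/2})$. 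The martingale term is handled by the Burkholder--Davis--Gundy inequality: its supremum is bounded by $C_p\,\E[(\int_0^T \|X\|^{2p-2}\tr(Q)\,dt)^{1/2}]\le \tfrac12\E\sup\|X\|^p + C_p\tr(Q)\,T\,\E\int_0^T\|X\|^{p-2}dt$. Gronwall's inequality then yields~\eqref{eq:X_M p moment bound} with $F_{p,T}$ of the form $C_pe^{C_pT(1+\tr(Q))}(1+\tr(Q))^{p/2}$, which is increasing in $\tr(Q)$. The factor $2p$ in front of the gradient integral is matched by adjusting constants; since the gradient term enters with a good sign, we may keep half of it. A stopping-time localization makes the expectations finite during the argument.

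For (ii), we follow the standard exponential-martingale argument. Apply Itô's formula to $\beta\|X\|^2$ with $X=X_M^Q$, using $\innt{X,B_M(X)}=0$:
\begin{equation*}
\beta\|X(t)\|^2 + 2\beta\int_0^t\|\nabla X\|^2 = \beta\|P_MX_0\|^2 + \beta t\tr(P_MQP_M) + 2\beta\int_0^t\innt{X,dW^Q}.
\end{equation*}
Write $N(t)=2\beta\int_0^t\innt{X,dW^Q}$. Its quadratic variation satisfies $\langle N\rangle_t\le 4\beta^2\|Q\|\int_0^t\|X\|^2\le 2\beta^2\|Q\|\int_0^t\|\nabla X\|^2$, by Poincaré's inequality~\eqref{eq:poincare}. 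Hence
\begin{equation*}
\beta\|X(t)\|^2+\beta\int_0^t\|\nabla X\|^2 \le \beta\|X_0\|^2+\beta T\tr(Q) + \Big(N(t)-\tfrac{\kappa}{2}\langle N\rangle_t\Big)
\end{equation*}
for a suitable $\kappa$. Choosing $\kappa$ so that $\tfrac\kappa2\cdot 2\beta^2\|Q\|\le\beta$ makes $\exp(\kappa(N-\tfrac\kappa2\langle N\rangle))$ a supermartingale.

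The supremum in time inside the exponential is the main obstacle. We would take the approach of the proof of Lemma~\ref{lem:Y exp bound} (Appendix~\ref{App A}): combine the exponential supermartingale with Doob's maximal inequality, or with the bound $\Prob(\sup_t(N-\tfrac\kappa2\langle N\rangle)\ge a)\le e^{-\kappa a}$. This produces the factor $2$ and the range $\beta<\frac{1}{2\|Q\|}$ in the deterministic case~\eqref{eq:X_M exp bound deterministic initial}. For random $X_0$, condition on $\calF_0$ and apply the deterministic bound. Then use Jensen's (Hölder's) inequality to write $\E[e^{\beta\|X_0\|^2}]\le\E[e^{\gamma_0\|X_0\|^2}]^{\beta/\gamma_0}$ for $\beta\le\gamma_0$. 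The stated range $\beta<\gamma_0/(1+2\gamma_0\|Q\|)$ arises if the Hölder splitting between the initial-data term and the martingale term is optimized jointly, and we would check this bookkeeping carefully. Finally, all bounds are uniform in $M$ because $\|P_M\|=1$ and $\tr(P_MQP_M)\le\tr(Q)$.
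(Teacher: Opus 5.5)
Part (i) is essentially the paper's argument. The paper takes the It\^o/BDG/Gr\"onwall estimates for $\|X_M^Q(t\wedge\tau_R)\|_{L^2}^p$ from \cite{Brehier_Cox_Millet_2024} and only makes the $\tr(Q)$-dependence explicit; it absorbs the It\^o correction as $\tfrac14p^2\tr(Q)(\|X\|_{L^2}^p+1)$ rather than via Young, which changes nothing. For (ii) the paper gives no proof and simply cites \cite{Brehier_Cox_Millet_2024}. Your exponential-supermartingale route is the standard one and coincides with the paper's proof of Lemma~\ref{lem:Y exp bound}.

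Your hesitation about the range of $\beta$ for random $X_0$ is unnecessary. Set $\kappa=(\beta\|Q\|_{\calL(L^2)})^{-1}$ and $S=\sup_t(N(t)-\frac{\kappa}{2}\langle N\rangle_t)$. The maximal inequality applied to the nonnegative supermartingale $\mathbf{1}_F\exp(\kappa N-\frac{\kappa^2}{2}\langle N\rangle)$, $F\in\calF_0$, gives $\E[e^{S}\mid\calF_0]\le 2$ as soon as $\kappa\ge2$. Conditioning plus Jensen then yields the random-data bound for all $\beta\le\min\{\gamma_0,\frac{1}{2\|Q\|_{\calL(L^2)}}\}$, a range that contains the stated one. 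The stated range $\beta<\frac{\gamma_0}{1+2\gamma_0\|Q\|_{\calL(L^2)}}$ is what the cruder H\"older splitting $\E[e^{\beta\|X_0\|^2}e^{S}]\le\E[e^{\gamma_0\|X_0\|^2}]^{\beta/\gamma_0}\E[e^{r'S}]^{1/r'}$ produces, with $r'=\frac{\gamma_0}{\gamma_0-\beta}$ and the requirement $\kappa\ge2r'$.

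The genuine gap is the step you yourself flag as the main obstacle. Following the proof of Lemma~\ref{lem:Y exp bound} does not close it, because that proof makes the same unjustified jump. Your pathwise inequality holds with $\int_0^t$, so after taking the supremum it controls $\sup_t\bigl(\|X(t)\|_{L^2}^2+\int_0^t\|\nabla X(s)\|_{L^2}^2\,ds\bigr)$. That bounds $\sup_t\|X(t)\|_{L^2}^2$ and $\int_0^T\|\nabla X\|_{L^2}^2$ separately, but their sum only up to a factor $2$ in the exponent.

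This loss cannot be avoided: with the stated constants, \eqref{eq:X_M exp bound deterministic initial} is false. Take $Q=\epsilon\langle\cdot,h_1\rangle_{L^2}h_1$ with $2\beta\epsilon<1$ and $x_0=Rh_1$. The energy identity reads $\|X(t)\|_{L^2}^2+2\int_0^t\|\nabla X(s)\|_{L^2}^2\,ds=R^2+\epsilon t+\widetilde N(t)$, where $\widetilde N(t)=2\int_0^t\langle X(s),P_M\,dW^Q(s)\rangle_{L^2}$ is a true martingale. Together with $\|\nabla x\|_{L^2}^2\ge\pi^2\|x\|_{L^2}^2$ on $H_M$, this gives $\E\|X(T)\|_{L^2}^2\le e^{-2\pi^2T}R^2+\frac{\epsilon}{2\pi^2}$. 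Using $\sup_t\|X(t)\|_{L^2}^2\ge R^2$ and Jensen,
\begin{equation*}
\E\Big[\exp\Big(\beta\sup_{t\in[0,T]}\|X(t)\|_{L^2}^2+\beta\int_0^T\|\nabla X(t)\|_{L^2}^2\,dt\Big)\Big]\ge\exp\Big(\beta R^2+\tfrac{\beta}{2}\big(1-e^{-2\pi^2T}\big)R^2-\tfrac{\beta\epsilon}{4\pi^2}\Big).
\end{equation*}
For $R$ large this exceeds the claimed bound $2e^{\beta\epsilon T}e^{\beta R^2}$. For $\epsilon=0$ it is just the energy balance of the deterministic flow: $\sup_t\|X(t)\|^2=R^2$, while $\int_0^\infty\|\nabla X\|^2=R^2/2$. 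The same example with $X_0=Rh_1$ refutes \eqref{eq:X_M exp bound random initial}.

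What your argument does prove, and what is true, is
\begin{equation*}
\sup_{M\in\N}\E\Big[\exp\Big(\beta\sup_{t\in[0,T]}\Big(\|X_M^Q(t)\|_{L^2}^2+\int_0^t\|\nabla X_M^Q(s)\|_{L^2}^2\,ds\Big)\Big)\Big]\le 2e^{\beta T\tr(Q)}\E\big[\exp(\gamma_0\|X_0\|_{L^2}^2)\big]^{\frac{\beta}{\gamma_0}}.
\end{equation*}
This gives exponential moments of $\sup_t\|X_M^Q(t)\|_{L^2}^2$ and of $\int_0^T\|\nabla X_M^Q\|_{L^2}^2$ separately, which is all the paper uses afterwards. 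Lemma~\ref{lem:sup of psi} needs only the former, and the estimate of $(C)$ in the proof of Theorem~\ref{thm:strong perturbation} needs only the latter. So you should state and prove (ii) in this form rather than try to overcome the obstacle.
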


\begin{lemma} \label{lem:L_inf bound of X_M}
    Let $\alpha \in (\frac{1}{4}, \frac{1}{2})$ and $p \geq  \frac{8}{3}$ be such that the initial condition $X_0 \in D((-A)^{\alpha})$ a.s. and $\E\left[\Ltwo{(-A)^{\alpha}X_0}^p\right] + \E\left[\Ltwo{X_0}^{3p}\right] < \infty$. Then there is a non-decreasing  $F_{p,T}\colon [0,\infty) \to  (0,\infty)$ (dependent on $p$ and $T$) and $C_{p,\alpha,T} >0$ such that for all positive self-adjoint $Q\in \calL_1(L^2)$ we have
\begin{align}
    \sup_{M \in \N} \mathbb{E}\bigg[\sup_{t \in [0,T], z \in [0,1]}|X_M^Q(t)(z)|^p\bigg] \leq C_{p,\alpha,T}F_{p,T}(\tr(Q))\left(1+\EE{\Ltwo{(-A)^{\alpha}X_0}^p} + \EE{\Ltwo{X_0}^{3p}}\right).
\end{align}
\end{lemma}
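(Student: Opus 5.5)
The plan is to reduce to the $L^\infty$ regularity of the stochastic convolution via the splitting $X_M^Q = Y_M + Z_M$, where $Y_M$ is the stochastic convolution~\eqref{eq:stoch conv perturbation} started from zero and $Z_M := X_M^Q - Y_M$. The remainder $Z_M$ solves the pathwise random PDE
\begin{equation*}
Z_M(t) = e^{tA}P_M X_0 + \int_0^t e^{(t-s)A} B_M(Z_M(s)+Y_M(s))\,ds .
\end{equation*}
Since $\sup_{t,z}|X_M^Q(t)(z)|\le \sup_t\|Y_M(t)\|_{L^\infty}+\sup_t\|Z_M(t)\|_{L^\infty}$, Lemma~\ref{lem:Y Linf bound} handles the first term with a bound $C_{p,T}\tr(Q)^{p/2}$. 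This bound is uniform in $M$ and increasing in $\tr(Q)$, so it can be absorbed into $F_{p,T}$. Everything therefore reduces to an $L^p(\Omega)$ bound on $\sup_t\|Z_M(t)\|_{L^\infty}$.

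For $Z_M$ I fix $\lambda\in(\frac14,\alpha]$ and use~\eqref{eq:Linf bound by L2}, so that $\|Z_M(t)\|_{L^\infty}\le C\|(-A)^{\lambda}Z_M(t)\|_{L^2}$. The initial term is bounded by $\|(-A)^{\alpha}X_0\|_{L^2}$. For the nonlinear term, I write $B_M(x)=P_M\nabla(P_Mx)^2$, so that
\[
\|(-A)^{\lambda}e^{(t-s)A}B_M(x)\|_{L^2}\le C(t-s)^{-\lambda-\frac12}\|(P_Mx)^2\|_{L^2}\le C(t-s)^{-\lambda-\frac12}\|x\|_{L^\infty}\|x\|_{L^2}.
\]
Here I use that $(-A)^{1/2}$ and $\nabla$ agree in norm on the relevant spaces, together with Lemma~\ref{lem:A_analytic}. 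Because $\lambda<\frac12$ the kernel is integrable, and bounding $\|x\|_{L^\infty}\le\|Z_M\|_{L^\infty}+\|Y_M\|_{L^\infty}$ gives a Volterra-type inequality. The term $\|Z_M\|_{L^\infty}\|X_M\|_{L^2}$ is linear in the unknown, with a random coefficient $\sup_t\|X_M^Q(t)\|_{L^2}$.

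To close the estimate, I apply a singular Gronwall lemma to $\phi(t)=\|(-A)^\lambda Z_M(t)\|_{L^2}$. A naive Gronwall would produce a constant exponential in $\sup\|X\|_{L^2}^{1/(\frac12-\lambda)}$, which is not integrable. To avoid this, I use a better interpolation: in one dimension, $\|Z\|_{L^\infty}\lesssim\|Z\|_{L^2}^{1/2}\|\nabla Z\|_{L^2}^{1/2}$. I then estimate the energy of $Z_M$ directly. Testing the equation for $Z_M$ with $Z_M$ and using~\eqref{eq:B_cancelations} gives
\[
\tfrac{d}{dt}\|Z_M\|^2+2\|\nabla Z_M\|^2\lesssim \|Y_M\|_{L^\infty}\|Z_M\|\|\nabla Z_M\| + \|Y_M\|_{L^\infty}^2\|\nabla Y_M\|\cdots .
\]
This yields polynomial moments of $\sup_t\|Z_M\|^2+\int\|\nabla Z_M\|^2$ in terms of $\|Y_M\|_{L^\infty}$ and $\|X_0\|$, via Lemma~\ref{lem: X_M sup +exp moment bounds}. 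Feeding this into the mild estimate with exponent $\lambda+\frac12<1$ and Hölder in time gives
\[
\phi(t)\lesssim \|(-A)^\alpha X_0\|+\Big(\sup_s\|X_M\|_{L^2}\Big)\sup_s\|X_M\|_{L^\infty}^{1/2}\Big(\int_0^T\|\nabla X_M\|^2\Big)^{1/4}.
\]
Young's inequality absorbs $\sup\|X_M\|_{L^\infty}^{1/2}$ into the left-hand side, up to the controlled $Y_M$ part. What remains is a cubic expression in $L^2$-level quantities; by Lemma~\ref{lem: X_M sup +exp moment bounds}(i) with exponent $3p$, this explains the $\E\|X_0\|^{3p}_{L^2}$ term. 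The condition $p\ge\frac83$ should make $3p\ge 8$, so that the moment lemma applies, possibly after adjusting Hölder exponents.

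The main obstacle is the absorption step. The bound on $\sup\|Z\|_{L^\infty}$ must be linear in the unknown after Young, with powers of the $L^2$ and energy norms that total at most $3$. This has to be done without exponential Gronwall constants, since those would require exponential moments that the statement does not assume. If absorption fails, I would instead localize in time on random intervals where $\int\|\nabla X\|^2$ is small, and count the intervals using the energy bound.
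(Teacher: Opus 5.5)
Your overall plan is the one behind the paper's proof. The paper imports from Br\'ehier--Cox--Millet the pathwise bound $\sup_{t,z}|X_M^Q(t)(z)|\le C_\alpha\|(-A)^\alpha X_0\|_{L^2}+2\sup_t\|I_M(t)\|_{L^\infty}+C\big(\int_0^T\|X_M^Q\|_{L^2}^{3p-2}\|\nabla X_M^Q\|_{L^2}^2\,ds\big)^{1/p}$, written there with $4\tilde p=3p$. The factor $2$ and the cubic homogeneity of the last term suggest it comes from an absorption argument like yours. The paper then takes $p$-th moments using Lemma~\ref{lem:Y Linf bound} and Lemma~\ref{lem: X_M sup +exp moment bounds}(i) with exponent $3p$.

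\textbf{First problem: your central display does not follow from the argument you give.} To extract $(\int_0^T\|\nabla X_M\|_{L^2}^2\,ds)^{1/4}$ from $\int_0^t(t-s)^{-\lambda-1/2}\cdots ds$ by H\"older you need $(\lambda+\frac12)\cdot\frac43<1$, i.e.\ $\lambda<\frac14$. This contradicts the requirement $\lambda>\frac14$ coming from \eqref{eq:Linf bound by L2}. The fix is to interpolate only the half of $\|X_M\|_{L^\infty}$ that you do not absorb: $\|X_M\|_{L^\infty}\|X_M\|_{L^2}\le \sup_s\|X_M(s)\|_{L^\infty}^{1/2}\,C\|X_M\|_{L^2}^{5/4}\|\nabla X_M\|_{L^2}^{1/4}$. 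Then H\"older with exponent $8$ on the gradient factor works for $\lambda\in(\frac14,\min\{\alpha,\frac38\})$. After Young, the remainder is $C\sup_s\|X_M\|_{L^2}^{5/2}(\int_0^T\|\nabla X_M\|_{L^2}^2\,ds)^{1/4}$.

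\textbf{Second problem: the moment step is not covered by Lemma~\ref{lem: X_M sup +exp moment bounds}(i) as you claim.} That lemma controls $\E\sup_t\|X_M\|_{L^2}^{3p}$ and $\E\int_0^T\|X_M\|_{L^2}^{3p-2}\|\nabla X_M\|_{L^2}^2\,ds$. It does not control moments of the energy $\int_0^T\|\nabla X_M\|_{L^2}^2\,ds$ on its own. Once you have pulled $\sup_s\|X_M\|_{L^2}$ out of the time integral, you cannot recombine the two. The paper's inequality is arranged so that its remainder, raised to the power $p$, is exactly the integral in Lemma~\ref{lem: X_M sup +exp moment bounds}(i) with exponent $3p$. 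That is where the hypothesis on $\E\|X_0\|_{L^2}^{3p}$ comes from.

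To close your version you need an extra estimate, for example It\^o's formula for $\|X_M\|_{L^2}^2$ together with BDG. This gives $2\int_0^T\|\nabla X_M\|_{L^2}^2\,ds\le\|X_0\|_{L^2}^2+T\tr(Q)+2\sup_t|N_t|$, where $N_t=\int_0^t\langle X_M,P_M\,dW^Q\rangle_{L^2}$ satisfies $\E\sup_t|N_t|^{3p/2}\lesssim (T\tr(Q))^{3p/4}\,\E\sup_t\|X_M\|_{L^2}^{3p/2}$. Combined with Young's inequality $a^{5p/2}b^{p/4}\le a^{3p}+b^{3p/2}$, this yields the statement with exactly $3p$ moments of $X_0$.

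Your $Z_M$-energy estimate does not do this job. Pathwise, it bounds $\int_0^T\|\nabla Z_M\|_{L^2}^2$ by terms such as $T\sup_t\|Y_M\|_{L^\infty}^2\sup_t\|Z_M\|_{L^2}^2$. Separating the resulting product $\sup_t\|X_M\|_{L^2}^{3p}\sup_t\|Y_M\|_{L^\infty}^{p/2}$ then requires H\"older, which costs more than $3p$ moments of $X_0$. The singular Gronwall and the localisation fallback are not needed.
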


\begin{lemma} \label{lem:(-A)X_M bound}
    Let $\alpha \in (\frac{1}{4}, \frac{1}{2})$ and $p \geq  \frac{4}{3}$ be such that the initial condition $X_0 \in D((-A)^{\alpha})$ a.s. and $\EE{\Ltwo{(-A)^{\alpha}X_0}^{2p}} + \EE{\Ltwo{X_0}^{6p}} < \infty$. Then for all $\lambda,\gamma \in \left(0,\frac{1}{2}\right)$ with $\lambda + \gamma < \alpha$, there exists a non-decreasing $F_{p,T}:[0,\infty)\to (0,\infty)$ (dependent on $p$ and $T$) and $C_{p,\alpha,\gamma,\lambda,T} >0$ such that for all positive self-adjoint $Q\in \calL_1(L^2)$ we have 
\begin{align}
    \sup_{M \in \N} \EE{\|(-A)^{\lambda}X_M^Q\|^p_{C^\gamma([0,T],L^2)}} \leq C_{p,\alpha,\gamma,\lambda,T}F_{p,T}(\tr (Q))\left(1+\mathbb{E}[\Ltwo{(-A)^{\alpha}X_0}^{2p}] + \mathbb{E}[\Ltwo{X_0}^{6p}]\right). \label{eq:(-A)X_M inequality}
\end{align}
\end{lemma}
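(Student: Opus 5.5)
The plan is to follow the strategy of \cite[Lemma A.3 / Prop.~3.x]{Brehier_Cox_Millet_2024} while tracking the dependence on $Q$ only through $\tr(Q)$. We split $X_M^Q = Y_M + Z_M$, where $Y_M$ is the stochastic convolution~\eqref{eq:stoch conv perturbation} and $Z_M$ solves the random PDE
\begin{equation*}
Z_M(t) = e^{tA}P_M X_0 + \int_0^t e^{(t-s)A} B_M(Z_M(s)+Y_M(s))\,ds .
\end{equation*}
Since $\|(-A)^\lambda Y_M\|_{C^\gamma([0,T],L^2)}$ is already controlled for $\lambda+\gamma<\frac12$ by $C\tr(Q)^{\nicefrac{p}{2}}$ via \cite[Lemma A.1]{Brehier_Cox_Millet_2024} (as used in Lemma~\ref{lem:Y Linf bound}), it suffices to bound $\E\|(-A)^\lambda Z_M\|^p_{C^\gamma([0,T],L^2)}$.

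For $Z_M$, the first step is the initial-value term. For $0\le s<t\le T$ we write
\begin{equation*}
(-A)^\lambda\bigl(e^{tA}-e^{sA}\bigr)P_MX_0 = (-A)^{\lambda+\gamma-\alpha}\bigl(e^{(t-s)A}-I\bigr)(-A)^{-\gamma}\, e^{sA}(-A)^{\alpha}P_MX_0 .
\end{equation*}
Using $\|(-A)^{-\gamma}(e^{rA}-I)\|\le Cr^\gamma$ and $\lambda+\gamma<\alpha$, this term is bounded by $C|t-s|^\gamma\|(-A)^\alpha X_0\|_{L^2}$, whose $p$-th moment is finite by assumption.

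The second step is the convolution term. Write $B_M(x)=P_M\nabla(P_Mx)^2$, so that $(-A)^{\lambda}e^{(t-s)A}B_M(x)=(-A)^{\lambda+\frac12}e^{(t-s)A}\,[(-A)^{-\frac12}\nabla]\,P_M(P_Mx)^2$. Here $(-A)^{-\frac12}\nabla$ is bounded from $L^2$ (the cosine basis is mapped to the sine basis), so by Lemma~\ref{lem:A_analytic} we get
\begin{equation*}
\|(-A)^{\lambda}e^{(t-s)A}B_M(x)\|_{L^2}\le C(t-s)^{-\lambda-\frac12}\|x\|_{L^4}^2 ,
\end{equation*}
and $\lambda+\frac12<1$ makes this integrable. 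The standard Hölder estimate for convolutions $\int_0^t e^{(t-s)A}f(s)\,ds$ with $f$ in $L^\infty(0,T;D((-A)^{-\frac12}))$ then yields $C^\gamma$ regularity in $D((-A)^\lambda)$ whenever $\lambda+\gamma<\frac12$. The resulting bound is
\begin{equation*}
\|(-A)^\lambda \textstyle\int_0^\cdot e^{(\cdot-s)A}B_M(X_M^Q)\,ds\|_{C^\gamma}\le C\sup_{t}\|X_M^Q(t)\|_{L^4}^2\le C\sup_t\|X_M^Q(t)\|_{L^\infty}\|X_M^Q(t)\|_{L^2}.
\end{equation*}
Alternatively we can use $\|x\|_{L^4}^2\le \|x\|_{L^\infty}\|x\|_{L^2}$ directly. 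By Cauchy--Schwarz, the $p$-th moment is then controlled by
\begin{equation*}
\bigl(\E\sup\|X_M^Q\|_{L^\infty}^{2p}\bigr)^{\nicefrac12}\bigl(\E\sup\|X_M^Q\|_{L^2}^{2p}\bigr)^{\nicefrac12}.
\end{equation*}
Note that it is cleaner to estimate the convolution with $X_M^Q$ itself rather than with $Z_M+Y_M$ separately.

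The third step is to insert the earlier moment bounds. Lemma~\ref{lem:L_inf bound of X_M}, applied with exponent $2p$ (which needs $2p\ge\frac83$, i.e.\ $p\ge\frac43$), gives $\E\sup\|X^Q_M\|^{2p}_{L^\infty}\le CF(\tr Q)(1+\E\|(-A)^\alpha X_0\|^{2p}+\E\|X_0\|^{6p})$. Lemma~\ref{lem: X_M sup +exp moment bounds}(i) with exponent $2p\le 6p$ bounds the $L^2$ factor by $F(\tr Q)(1+\E\|X_0\|^{6p})$. Combining these via $\sqrt{ab}\le a+b$, and absorbing lower moments into higher ones together with $1$, produces the right-hand side of~\eqref{eq:(-A)X_M inequality}. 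The product of the non-decreasing functions of $\tr(Q)$ arising along the way is again non-decreasing, and every constant is uniform in $M$ because $\|P_M\|\le1$ and $P_M$ commutes with $A$.

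I expect the main obstacle to be the Hölder-in-time estimate of the nonlinear convolution in the second step. The difference $\int_s^t$ plus $\int_0^s (e^{(t-s)A}-I)e^{(s-r)A}\cdots$ must be split so that each piece is bounded by $|t-s|^\gamma$ times an integrable singularity $(s-r)^{-\lambda-\gamma-\frac12}$. This works exactly because $\lambda+\gamma<\alpha<\frac12$. It is also the step where one must confirm that only pathwise suprema of $X_M^Q$ enter, so that no $Q$-dependence beyond the cited lemmas appears.
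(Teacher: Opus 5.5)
Your argument is essentially the paper's: the mild formulation splits $(-A)^{\lambda}X_M^Q$ into the initial-value term, the nonlinear convolution controlled by $\sup_{t}\|X_M^Q(t)\|_{L^4}^2$, and the stochastic convolution, which is handled by \cite[Lemma A.1]{Brehier_Cox_Millet_2024}; Lemma~\ref{lem:L_inf bound of X_M} is then applied with exponent $2p$. The paper imports the $L^4$ bound from (A.20)--(A.21) of \cite{Brehier_Cox_Millet_2024}, and your derivation via $(-A)^{-\frac12}P_M\nabla$, with $P_M$ placed after $\nabla$, reproduces it. The only real difference is that the paper uses $\|\cdot\|_{L^4}\le\|\cdot\|_{L^\infty}$ on $(0,1)$ directly, which avoids your extra $L^2$ factor. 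If you keep that factor, note that Lemma~\ref{lem: X_M sup +exp moment bounds}(i) requires an exponent of at least $4$, so for $p\in[\frac43,2)$ you should apply it with exponent $6p$ and then use Jensen's inequality.
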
 

Based on the results above, we now provide some very specific bounds needed to establish the weak error bounds. We begin by introducing, for $q \geq 1$ and $\delta, \epsilon >0$, the function $\Psi_{\delta,\epsilon,q}\colon D((-A)^{\frac{1}{4}+\delta}) \rightarrow \R$ given by
\begin{align} \label{def:Psi}
    \Psi_{\delta,\epsilon,q}(x): = e^{\epsilon \Ltwo{x}^2}\left(1+\|(-A)^{\frac{1}{4}+\delta}x\|_{L^2}^q\right),\quad x \in D((-A)^{\frac{1}{4}+\delta}).
\end{align} 
By slightly adapting \cite[Lemma 5.4]{Brehier_Cox_Millet_2024}, we have the following bound for $\Psi_{\delta,\epsilon,q}(X_M^Q(t))$:
\begin{lemma} \label{lem:sup of psi}
    Suppose there exist $\gamma_0 >0$, $p>32$ and $\delta_0 >0$ such that $\EE{\exp(\gamma_0\|X_0\|_{L^2}^2)} < \infty$ and $\EE{\|(-A)^{\frac{1}{4}+\delta_0}X_0\|_{L^2}^p} < \infty$. Then, for all $q\in [1,\frac{p}{2})$, and $\delta \in [0,\delta_0)$ there exists a non-decreasing function $F_{p,T}:[0,\infty) \to (0,\infty)$ (dependent on $p$ and $T$) and $C_{\gamma_0,\delta,\delta_0,p,q,T}>0$ such that for all positive self-adjoint $Q\in \calL_1(L^2)$ and all $\epsilon \in (0,\frac{(p-2q)\gamma_0}{p(1+2\gamma_0\|Q\|_{\mathcal{L}(L^2)})})$ we have
    \begin{multline}
        \sup_{M\in \N} 
        \EE{\sup_{t\in [0,T]}\Psi_{\delta,\epsilon,q}(X_M^Q(t))} \\
        \leq C_{\gamma_0,\delta,\delta_0,p,q,T}F_{p,T}(\tr(Q))\left(1+\EE{\exp(\gamma_0 \|X_0\|_{L^2}^2)}\right)^2\left(1+\EE{\|(-A)^{\frac{1}{4}+\delta_0}X_0\|^{p}_{L^2}}\right)^{\frac{2q}{p}}. \label{eq:sup of Psi bound}
    \end{multline}
\end{lemma}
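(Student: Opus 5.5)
The plan is to split $\Psi_{\delta,\epsilon,q}$ with Hölder's inequality into an exponential factor and a polynomial factor, and then bound each factor with a moment estimate that is already available. Since $\sup_t \Psi_{\delta,\epsilon,q}(X_M^Q(t)) \le \exp(\epsilon\sup_t\Ltwo{X_M^Q(t)}^2)\,(1+\sup_t\|(-A)^{\frac14+\delta}X_M^Q(t)\|_{L^2}^q)$, I will apply Hölder with exponents $\frac{p}{p-2q}$ and $\frac{p}{2q}$, which gives
\begin{align*}
\EE{\sup_t\Psi_{\delta,\epsilon,q}(X_M^Q(t))}\le \EE{\exp\Big(\tfrac{\epsilon p}{p-2q}\sup_t\Ltwo{X_M^Q(t)}^2\Big)}^{\frac{p-2q}{p}}\EE{\Big(1+\sup_t\|(-A)^{\frac14+\delta}X_M^Q(t)\|_{L^2}^q\Big)^{\frac{p}{2q}}}^{\frac{2q}{p}}.
\end{align*}
The restriction on $\epsilon$ is designed for exactly this split: $\beta:=\frac{\epsilon p}{p-2q}<\frac{\gamma_0}{1+2\gamma_0\|Q\|_{\mathcal L(L^2)}}$.

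For the first factor, Lemma~\ref{lem: X_M sup +exp moment bounds}(ii) bounds it by $\big(2e^{\beta T\tr(Q)}\EE{\exp(\gamma_0\|X_0\|^2_{L^2})}^{\beta/\gamma_0}\big)^{(p-2q)/p}$. This is at most $2e^{\gamma_0 T\tr(Q)}(1+\EE{\exp(\gamma_0\|X_0\|_{L^2}^2)})$, because $\beta<\gamma_0$ and all exponents are at most $1$. The dependence on $\tr(Q)$ is then non-decreasing, as required.

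For the second factor I need $\EE{\sup_t\|(-A)^{\frac14+\delta}X_M^Q(t)\|_{L^2}^{p/2}}$. I will use Lemma~\ref{lem:(-A)X_M bound} with $\lambda=\frac14+\delta$, exponent $p/2$ in place of $p$, some $\gamma>0$ small, and $\alpha\in(\lambda+\gamma,\frac12)$ with $\alpha\le\frac14+\delta_0$. The Hölder norm dominates the supremum, so that lemma gives a bound $C\,F_{p,T}(\tr Q)(1+\EE{\|(-A)^\alpha X_0\|^{p}}+\EE{\|X_0\|^{3p}})$. The term $\EE{\|(-A)^\alpha X_0\|^p}$ is controlled by $\EE{\|(-A)^{\frac14+\delta_0}X_0\|^p}$ via \eqref{eq:A_fracpownorminc}. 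The term $\EE{\|X_0\|_{L^2}^{3p}}$ is finite and bounded by a constant $C_{\gamma_0,p}\EE{\exp(\gamma_0\|X_0\|^2)}$, using $x^{3p}\le C_{\gamma_0,p}e^{\gamma_0x^2}$. Raising to the power $2q/p$ and absorbing constants then yields a product of $(1+\EE{\exp(\gamma_0\|X_0\|^2)})$ and $(1+\EE{\|(-A)^{\frac14+\delta_0}X_0\|^p})^{2q/p}$. This accounts for the square on the exponential-moment factor, since that expectation appears once here and once from the first factor.

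I expect the main obstacle to be the admissible range of parameters. If $\delta_0$ is large, I will replace $\delta_0$ by $\min(\delta_0,\frac18)$, which costs nothing by \eqref{eq:A_fracpownorminc}. The real constraint is that $\lambda=\frac14+\delta$ must satisfy $\lambda+\gamma<\alpha<\frac12$ with $\alpha\le\frac14+\delta_0$. This is possible precisely because $\delta<\delta_0$, after choosing $\gamma<\delta_0-\delta$. There is also a borderline issue: if the lemma's hypothesis $p/2\ge\frac43$ or its $2p$-moment requirement were stated differently, I would adjust the Hölder exponent. Here $p>32$ leaves ample room to use the exponent $p/4$ instead, which requires only $\EE{\|(-A)^\alpha X_0\|^{p/2}}$ and is certainly fine. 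Taking the supremum over $M$ completes the argument. This is where the reference to adapting \cite[Lemma 5.4]{Brehier_Cox_Millet_2024} enters, with the $Q$-dependence now tracked through $F_{p,T}(\tr(Q))$ and $\|Q\|_{\mathcal L(L^2)}$ only.
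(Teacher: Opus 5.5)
Your proposal is correct and is essentially the paper's proof. It uses the same Hölder split with exponents $\frac{p}{p-2q}$ and $\frac{p}{2q}$, the exponential bound \eqref{eq:X_M exp bound random initial} with $\beta=\frac{p\epsilon}{p-2q}$, and Lemma~\ref{lem:(-A)X_M bound} with exponent $\frac p2$ and $\lambda=\frac14+\delta$; absorbing $\E\|X_0\|_{L^2}^{3p}$ into the exponential moment is what produces the square. One small slip: replacing $\delta_0$ by $\min\{\delta_0,\frac18\}$ would discard $\delta\in[\frac18,\delta_0)$, so instead pick $\alpha\in(\frac14+\delta,\min\{\frac14+\delta_0,\frac12\})$ and $\gamma<\alpha-\lambda$ directly; this, like the paper's reduction to $\delta_0<\frac14$, tacitly needs $\delta<\frac14$, because Lemma~\ref{lem:(-A)X_M bound} requires $\lambda+\gamma<\alpha<\frac12$.
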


\subsection{The Kolmogorov equation}
This section is concerned with the \emph{Kolmogorov backward partial differential equation}, or simply \emph{Kolmogorov equation}, associated to $X^Q_M$, i.e., to the Galerkin approximation of the stochastic Burgers equation  driven by a $Q$-Brownian motion, see equation \eqref{eq:Galerkin approx}. \par 

Recall the setting of Section~\ref{ssec:setting}, in particular, recall the definitions of $H_M, P_M, A_M,$ and $B_M$. Fix a positive self-adjoint $Q \in \calL_1(L^2)$. Given $x\in H_M$, let $X_M^x(t)$ denote the Galerkin approximation of stochastic Burger's equation \eqref{eq:Galerkin approx} driven by a $Q$-Brownian motion, and with deterministic initial value $X_0 = x$. 

\begin{theorem}[Kolmogorov equation] \label{Kolm thm}
Let $\varphi:L^2(0,1) \to \R$ be a twice continuously Fréchet differentiable function with bounded first and second derivatives. Define $u^Q_M \colon [0,T]\times H_M \to \R$ by
\begin{align}
    u^Q_M(t,x) = \EE{\varphi(X^x_M(t))}. \label{def:u_M}
\end{align}
Then $u^Q_M(t,\cdot)$ has first and second order Fréchet derivatives for all $t\in [0,T]$; we denote these derivatives by $Du^Q_M(t,x)$ and $D^2u^Q_M(t,x)$, respectively. 
Moreover, $u^Q_M$, $Du^Q_M$ and $D^2u^Q_M$ are continuous in both variables, $u^Q_M(\cdot,x)$ is differentiable, and $u^Q_M$ satisfies the following differential equation:
    \begin{equation}
    \begin{cases}
        \frac{\partial u^Q_M}{\partial t}(t,x) =  Du^Q_M(t,x).(Ax + B_M(x)) 
 + \frac{1}{2} \tr(D^2u^Q_M(t,x) P_M Q P_M)
\label{eq:Kolmogorov}, \\[0.5em]
         u^Q_M(0,x) = \varphi(x). 
    \end{cases}
    \end{equation}
\end{theorem}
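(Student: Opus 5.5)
This is a finite-dimensional statement: $X^x_M$ solves an SDE on $H_M\cong\R^M$ with drift $b_M(x)=A_Mx+B_M(x)$, which is a smooth polynomial of degree two, and additive noise $P_MW^Q$ with covariance $P_MQP_M$. The plan is the classical one.
\begin{enumerate}
\item Establish differentiability of the flow $x\mapsto X^x_M(t)$ with moment bounds on its derivatives.
\item Deduce differentiability of $u^Q_M$ by differentiating under the expectation.
\item Derive the Kolmogorov equation from Itô's formula and the Markov property.
\end{enumerate}

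\textbf{Step 1 (flow).} For fixed $\omega$, set $Z(t)=X^x_M(t)-P_MW^Q(t)$. This removes the noise and gives a random ODE on $H_M$,
\[
\dot Z = A_M(Z+P_MW^Q)+B_M(Z+P_MW^Q),
\]
with a smooth (quadratic) vector field in $Z$. Global existence follows from the energy identity $\langle B_M(x),x\rangle=0$. Classical ODE theory then gives that $x\mapsto X^x_M(t)$ is $C^\infty$ pathwise. Its derivatives $\eta^h(t)=DX^x_M(t).h$ and $\zeta^{g,h}(t)=D^2X^x_M(t).(g,h)$ solve the linearized equations
\[
\dot\eta^h=A\eta^h+2B_M[X^x_M,\eta^h],\qquad \dot\zeta^{g,h}=A\zeta^{g,h}+2B_M[X^x_M,\zeta^{g,h}]+2B_M[\eta^g,\eta^h].
\]

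\textbf{Step 2 (moment bounds on the derivatives).} Take the $L^2$ inner product with $\eta^h$ and use \eqref{eq:B_cancelations}. The skew part gives
\[
\langle \eta, B_M[X,\eta]\rangle=-\tfrac12\langle X,B(\eta)\rangle,
\]
which is bounded by $C\|X\|_{L^\infty}\|\eta\|\|\nabla\eta\|$. Absorbing $\|\nabla\eta\|^2$ into the dissipation yields
\[
\|\eta^h(t)\|^2\le \exp\Big(C\int_0^t\|X^x_M(s)\|_{L^\infty}^2ds\Big)\|h\|^2.
\]
Since $M$ is fixed, all norms on $H_M$ are equivalent, so $\|X\|_{L^\infty}\le C_M\|X\|_{L^2}$. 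The exponential moment bound \eqref{eq:X_M exp bound deterministic initial} is only available for small $\beta$, so it cannot control $\mathbb E\exp(C_M\int_0^t\|X\|^2)$ directly. This is the main obstacle. My first attempt is the standard trick from \cite{Brehier_Cox_Millet_2024}: write $X=Y_M+(X-Y_M)$ and exploit the smallness of the $\int\|\nabla X\|^2$ term together with Gagliardo–Nirenberg, $\|X\|_{L^\infty}^2\le C\|X\|\|\nabla X\|$, to get
\[
\int_0^t\|X\|_{L^\infty}^2\le \epsilon\int_0^t\|\nabla X\|^2+C_\epsilon\sup_s\|X\|^2 .
\]
It may still be necessary to restrict to short time intervals and iterate via the Markov property. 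Alternatively, for this qualitative statement it suffices to use localization: continuity and differentiability of $u$ only require local uniform integrability in $x$. Boundedness of $D\varphi$ and $D^2\varphi$, together with dominated convergence on the events $\{\sup_t\|X\|\le R\}$, should give the differentiability of $u$ and the continuity of $Du$ and $D^2u$, granted integrable moment bounds on $\eta$ and $\zeta$. Given these, the formulas
\[
Du.h=\mathbb E[D\varphi(X).\eta^h],\qquad D^2u.(g,h)=\mathbb E[D^2\varphi(X).(\eta^g,\eta^h)+D\varphi(X).\zeta^{g,h}]
\]
hold, and continuity in $(t,x)$ follows from pathwise continuity of the flow and its derivatives, plus uniform integrability.

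\textbf{Step 3 (the PDE).} Use the Markov property: $u(t+s,x)=\mathbb E[u(t,X^x_M(s))]$. Apply Itô's formula to $u(t,X^x_M(s))$ in the finite-dimensional space $H_M$; this is justified since $u(t,\cdot)\in C^2$ and has polynomial moment growth. This gives
\[
\frac{u(t+s,x)-u(t,x)}{s}=\frac1s\,\mathbb E\int_0^s\Big[Du(t,X_r).(AX_r+B_M(X_r))+\tfrac12\tr\big(D^2u(t,X_r)P_MQP_M\big)\Big]dr .
\]
Letting $s\downarrow0$, continuity yields the right derivative. Continuity in $t$ of the right-hand side of \eqref{eq:Kolmogorov} upgrades this to differentiability. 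The initial condition $u(0,x)=\varphi(x)$ is immediate.
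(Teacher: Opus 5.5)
Your overall architecture matches the sketch in Remark~\ref{diffb of u_M}: flow derivatives $\eta^h$ and $\zeta^{g,h}$ solving exactly \eqref{eq:eta evolution}--\eqref{eq:zeta evolution}, the representations \eqref{eq:Du expr}--\eqref{eq:D^2u expr}, then It\^o's formula plus the Markov property. Replacing the implicit function theorem in $H^p([0,T])$ by the pathwise random ODE for $X^x_M-P_MW^Q$ is a legitimate and more elementary way to get a smooth flow.

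\textbf{The gap is Step~2.} You flag it but do not close it, and everything else rests on it. Differentiating $\E[\varphi(X^x_M(t))]$ under the expectation ($\varphi$ is only Lipschitz, not bounded), the continuity of $Du^Q_M$ and $D^2u^Q_M$, and taking expectations in It\^o's formula all require $\sup_{y\in B(x,1)}\E\|\eta^{h,y}_M(t)\|_{L^2}^p<\infty$, and the analogue for $\zeta$.

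Localisation on $\{\sup_t\|X\|_{L^2}\le R\}$ does not remove this requirement. The complement contributes uniformly little in $h$ only if the quotients $\|X^{x+h}_M(t)-X^x_M(t)\|_{L^2}/\|h\|_{L^2}$ are uniformly integrable, which is the same moment bound.

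Your estimate $\exp(C\int_0^t\|X\|_{L^\infty}^2ds)$ cannot supply it. Its exponent is quadratic in $X$ with a fixed, non-small constant. Any redistribution (Agmon, Poincar\'e, or your $\epsilon\int\|\nabla X\|^2+C_\epsilon t\sup_s\|X\|^2$) leaves you needing $\E\exp(\beta\sup_s\|X\|^2+\beta\int\|\nabla X\|^2)$ with $\beta$ bounded below. But \eqref{eq:X_M exp bound deterministic initial} only holds for $\beta<1/(2\|Q\|_{\calL(L^2)})$, and such moments are genuinely infinite for large coefficients, as the Ornstein--Uhlenbeck process already shows. Splitting $[0,T]$ into short intervals does not help either, since the exponents add up again to a quadratic functional with a non-small constant.

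\textbf{The missing idea} is to spend less regularity on $\eta$. Integrate by parts once more,
\[
\langle\eta^h,B_M[X,\eta^h]\rangle_{L^2}=-\tfrac12\langle X,B(\eta^h)\rangle_{L^2}=\tfrac12\langle\nabla X,(\eta^h)^2\rangle_{L^2},
\]
and use the one-dimensional inequality $\|\eta\|_{L^4}^2\le\|\eta\|_{L^2}^{3/2}\|\nabla\eta\|_{L^2}^{1/2}$ together with Young's inequality. This gives
\[
\tfrac{d}{dt}\|\eta^h\|_{L^2}^2+\|\nabla\eta^h\|_{L^2}^2\le C\|\nabla X\|_{L^2}^{4/3}\|\eta^h\|_{L^2}^2 .
\]
Hence, for every $\epsilon>0$,
\[
\|\eta^h(t)\|_{L^2}^2+\int_0^t\|\nabla\eta^h\|_{L^2}^2ds\le 2\exp\Big(C_\epsilon t+\epsilon\int_0^t\|\nabla X\|_{L^2}^2ds\Big)\|h\|_{L^2}^2 .
\]
The exponent is now sub-quadratic in $X$. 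For $\epsilon$ small, \eqref{eq:X_M exp bound deterministic initial} yields $\E\|\eta^h(t)\|_{L^2}^p\le Ce^{c\epsilon\|x\|_{L^2}^2}\|h\|_{L^2}^p$, locally uniformly in $x$. For $\zeta$, use $\langle\zeta,B[\eta^g,\eta^h]\rangle_{L^2}=-\langle\nabla\zeta,\eta^g\eta^h\rangle_{L^2}$, the $L^4$ inequality above, and the bound on $\int_0^t\|\nabla\eta\|_{L^2}^2ds$.

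This is why Lemma~\ref{lem: X_M sup +exp moment bounds}(ii) controls $\beta\int_0^T\|\nabla X^Q_M\|_{L^2}^2dt$, and why Lemma~\ref{lem:D^2u bound} carries the factor $e^{\epsilon\|x\|_{L^2}^2}$. It also means $Du^Q_M(t,\cdot)$ grows like $e^{c\epsilon\|x\|^2}$, not polynomially. In Step~3 you should therefore justify taking expectations in It\^o's formula with these small exponential moments (or with stopping times), rather than with ``polynomial growth''.
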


\begin{remark} \label{diffb of u_M}
The proof of Theorem~\ref{Kolm thm} is beyond the scope of this paper, but we will sketch a possible approach. Indeed, a proof of this result for S(P)DEs with coefficients that have bounded derivatives up to third order can be found e.g. in~\cite[Chapter 7]{DaPrato_Zabczyk_2002} (see Theorems 7.3.6, 7.4.5, and 7.5.1). While these results are not directly applicable to our setting ($B_M$ does not have a bounded first derivative), the approach in \cite{DaPrato_Zabczyk_2002} can be extended to our setting (also using the bounds presented in Section~\ref{ssec:bounds}).\par 
Indeed, following the approach in \cite{DaPrato_Zabczyk_2002}, one first establishes the Fréchet differentiability of the map $H_M \ni x \mapsto X^x_M \in H^p([0,T])$, where $p\geq 1$ and $H^p([0,T])$ is a space of progressively measurable $H_M$-valued processes on $[0,T]$, which is a Banach space when endowed with the norm 
    \begin{align*}
        \|Y\|_{H^p([0,T])} = \sup_{t\in [0,T]}\mathbb{E}[\|Y(t)\|^p_{L^2}]^{\frac{1}{p}}.
    \end{align*} 
To this end, one considers $F:H_M\times H^p([0,T]) \to H^p([0,T])$ defined by 
    \begin{align*}
        F(x,Y)(t) = x + \int_0^t A_MY(s) + B_M(Y(s)) \, ds + P_MW^Q(t) - Y(t).
    \end{align*}
Note that for all $x \in H_M$, $F(x,X^x_M) = 0$. Then, by a suitable implicit function theorem, $X^x_M = f(x)$ for some differentiable $f:H_M \to H^p([0,T])$, moreover, using the explicit expressions for $Df$ and $D^2f$, one obtains the dynamics\footnote{Regarding $\eta$ and $\zeta$: we drop the dependence on $Q$ as we only deal with these concepts within this section.} of $\eta^{h,x}_M:=Df(x).(h)$ and $\zeta^{g,h,x}_M:=D^2f(x).(g,h) $:
\begin{align} 
    \eta^{h,x}_M(t) = h + \int_0^t A\eta^{h,x}_M(s) + 2B_M[X^x_M(s),\eta^{h,x}_M(s)]\,ds, \quad h,x\in H_M,\label{eq:eta evolution}
\end{align}
and
\begin{align}
    \zeta^{g,h,x}_M(t) = \int_0^t A\zeta^{g,h,x}_M(s) + 2B_M[X^x_M(s),\zeta^{g,h,x}_M(s)] + 2B_M[\eta^{g,x}_M(s),\eta^{h,x}_M(s)]\,ds, \quad g,h,x\in H_M.\label{eq:zeta evolution}
\end{align}
Subsequently, one obtains Fr\'echet differentiability of $u^Q_M(t,\cdot)$ by observing that
\begin{align} \label{eq:Du expr}
    Du^Q_M(t,x).(h) = \EE{D\varphi(X^x_M(t)).(\eta^{h,x}_M(t))},\quad h,x\in H_M,
\end{align}
and 
\begin{align} \label{eq:D^2u expr}
    D^2u^Q_M(t,x).(g,h) = \EE{D\varphi(X^x_M(t)).(\zeta^{g,h,x}_M(t))} + \EE{D^2\varphi(X^x_M(t)).(\eta^{g,x}_M(t),\eta^{h,x}_M(t))}, \quad g,h,x\in H_M.
\end{align}
The continuity of $u^Q_M$, $Du^Q_M$, and $D^2u^Q_M(\cdot,x)$ follows from the representation of $X_M$, $\eta^{h,x}_M$, and $\zeta^{g,h,x}_M$ as It\^o processes, combined with the bounds from Section~\ref{ssec:bounds}. Finally, It\^o's formula allows one to conclude that $u^Q_M$ satisfies~\eqref{eq:Kolmogorov}. Some further details can also be found in~\cite[Chapter 3]{Urbonas:2025}.
\end{remark}

\subsection{Estimates for the solution of the Kolmogorov equation and related bounds} \label{Kolm section}

We have the following (uniform in $M$) estimate for $D^2u^Q_M(t,x)$:
\begin{lemma}[{\hspace{1sp}\cite[(Theorem 4.1 (ii)]{Brehier_Cox_Millet_2024}}] \label{lem:D^2u bound}
    Let $\alpha,\beta\in [0,1)$ be such that $\alpha+\beta<1$, and let $\delta, \epsilon\in (0,\infty)$ be arbitrarily small auxiliary parameters. Then for all $M\in \N$, $x\in H_M$ the mapping $(-A)^{\beta} D^2 u^Q_M(t,x)(-A)^{\alpha}$ extends to a bounded operator on $(H_M,\|\cdot \|_{L^2})$ and there exists $C_{\alpha,\beta,\delta,\epsilon,T}(Q,\varphi)\in (0,\infty)$ such that for all $M\in \N$, $x\in H_M$ and $t\in (0,T]$ one has that
\begin{equation}
\begin{aligned}\label{eq:Kol_2nd_deriv_est}
\|(-A)^{\beta} D^2 u_M^{Q}(t,x)(-A)^{\alpha}\|_{\calL(H_M)} 
& \le C_{\alpha,\beta,\delta,\epsilon,T}(Q,\varphi)\, t^{-(\alpha + \beta)}\,   e^{\epsilon \|x\|_{L^2}^2}\,\big( 1+\|(-A)^{\frac{1}{4}+\delta }x\|_{L^2}^{16}\big).  
\end{aligned}
\end{equation}
\end{lemma}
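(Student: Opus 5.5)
The bound is the second-derivative estimate of \cite[Theorem 4.1 (ii)]{Brehier_Cox_Millet_2024}, the only change being that the constant is tracked in terms of $Q$. I would therefore follow that proof, keeping every $Q$-dependent constant explicit. Start from the representation~\eqref{eq:D^2u expr}:
\[
D^2u^Q_M(t,x).(g,h)=\EE{D\varphi(X^x_M(t)).(\zeta^{g,h,x}_M(t))}+\EE{D^2\varphi(X^x_M(t)).(\eta^{g,x}_M(t),\eta^{h,x}_M(t))}.
\]
Since $D\varphi$ and $D^2\varphi$ are bounded, it suffices to bound $\|(-A)^{-\beta}\zeta^{(-A)^{\alpha}g,h,x}_M(t)\|_{L^2}$ and $\|\eta^{(-A)^{\alpha}g,x}_M(t)\|_{L^2}\,\|\eta^{(-A)^{\beta}h,x}_M(t)\|_{L^2}$ in $L^1(\Omega)$, for $g,h\in H_M$ of unit norm. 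Here I use the symmetry of $D^2u$ to place $(-A)^{\beta}$ on the second argument, and duality to move $(-A)^{\beta}$ onto the test direction.

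\textbf{Step 1: the first variation $\eta$.} I would prove a pathwise bound of the form
\[
\|\eta^{(-A)^{\alpha}h,x}_M(t)\|_{L^2}\le C\,t^{-\alpha}\exp\Big(c\int_0^t\|\nabla X^x_M(s)\|_{L^2}^2\,ds\Big),\qquad \alpha\in[0,1).
\]
First, the energy estimate: test~\eqref{eq:eta evolution} with $\eta$ and use~\eqref{eq:B_cancelations} to write $2\langle\eta,B_M[X,\eta]\rangle=-\langle X,B_M(\eta)\rangle$. This term is bounded by $\|X\|_{L^\infty}\|\eta\|_{L^2}\|\nabla\eta\|_{L^2}$, or alternatively via $\|\nabla X\|_{L^2}$ and Gagliardo--Nirenberg. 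After absorbing into $\|\nabla\eta\|^2_{L^2}$, Gronwall gives the exponential of $\int_0^t\|\nabla X\|_{L^2}^2\,ds$. Second, for the singular case $h\mapsto(-A)^{\alpha}h$, write the mild form
\[
\eta(t)=e^{tA}(-A)^{\alpha}h+2\int_0^t e^{(t-s)A}B_M[X,\eta]\,ds,
\]
apply Lemma~\ref{lem:A_analytic} to the free part, and handle the Duhamel term with negative-order estimates on $B$. One could also interpolate, first at time $t/2$ and then from $t/2$ to $t$. The dependence on $x$ enters only through $X^x_M$, i.e.\ through $\sup_s\|X\|$, $\int_0^t\|\nabla X\|^2\,ds$ and $\|X\|_{L^\infty}$-type quantities.

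\textbf{Step 2: the second variation $\zeta$.} By Duhamel applied to~\eqref{eq:zeta evolution}, $\zeta$ is driven by the forcing $2B_M[\eta^g,\eta^h]$. I would pass $(-A)^{-\beta}$ through the semigroup and use $\|(-A)^{-\frac12-\kappa}B[u,v]\|_{L^2}\lesssim\|u\|_{L^2}\|v\|_{L^2}$-type bounds (valid for $\kappa>1/4$, via the Sobolev embedding~\eqref{eq:Linf bound by L2}), together with the time-weighted bounds on $\eta$ from Step 1. This produces the singularity $t^{-(\alpha+\beta)}$ through Beta-function integrals. The constraint $\alpha+\beta<1$ is exactly what keeps $\int_0^t (t-s)^{-\gamma}s^{-\alpha-\cdots}\,ds$ finite. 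The random factors again appear as exponentials of $\int\|\nabla X\|^2$ and polynomials in $\|(-A)^{\frac14+\delta}X\|_{L^2}$.

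\textbf{Step 3: taking expectations.} The exponential factors are controlled by~\eqref{eq:X_M exp bound deterministic initial}. This requires the coefficient $c$ to be small relative to $\beta<1/(2\|Q\|)$. When it is not, I would split $[0,t]$ into small intervals and use the Markov property, or use the smallness parameter $\epsilon$: $\epsilon$ appears in $e^{\epsilon\|x\|^2}$, so the constant $C(Q,\varphi)$ is allowed to depend on $\epsilon$. The polynomial factors are handled through Lemma~\ref{lem:L_inf bound of X_M}, Lemma~\ref{lem:(-A)X_M bound} and Lemma~\ref{lem:sup of psi}-type bounds with deterministic initial data, which yield $(1+\|(-A)^{\frac14+\delta}x\|^{16})$. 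Hölder's inequality combines the factors.

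I expect the main obstacle to be the exponential-moment step. The Gronwall constant in Step 1 must be made arbitrarily small, in order to match the admissible exponent and obtain an arbitrary $\epsilon$ in $e^{\epsilon\|x\|^2}$. This forces a careful splitting: use the $L^\infty$ norm of the stochastic convolution part $Y_M$ (Lemma~\ref{lem:Y Linf bound}) and treat the remainder $X-Y$ with a high-norm estimate, so that only a small multiple of $\int\|\nabla X\|^2$ remains in the exponent.
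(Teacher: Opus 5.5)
The paper does not prove this itself. It quotes \cite[Theorem 4.1(ii)]{Brehier_Cox_Millet_2024} and notes in Remark~\ref{rem:dependence in Q} that $Q$ enters only through the moment bounds of Section~\ref{ssec:bounds}. So redoing that argument while tracking $Q$, as you propose, is the intended route, and your overall structure matches it. However, your sketch has a genuine gap exactly at the step you call the main obstacle.

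Testing \eqref{eq:eta evolution} with $\eta=\eta^{h,x}_M$ and using \eqref{eq:B_cancelations} gives $\frac{d}{dt}\|\eta\|_{L^2}^2=-2\|\nabla\eta\|_{L^2}^2+2\langle\nabla X^x_M,\eta^2\rangle_{L^2}$. Suppose you close this via $\|X^x_M\|_{L^\infty}\|\eta\|_{L^2}\|\nabla\eta\|_{L^2}$, or in any way that leaves $c\|\nabla X^x_M\|_{L^2}^2\|\eta\|_{L^2}^2$ with a fixed $c$. Then the Gronwall factor cannot be fed into \eqref{eq:X_M exp bound deterministic initial} for general $Q$, and it never yields an arbitrary $\epsilon$. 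Your remedies do not repair this:
\begin{itemize}
\item The admissible range $\beta<1/(2\|Q\|_{\mathcal{L}(L^2)})$ in that lemma does not depend on the interval length. A Markov splitting of $[0,t]$ therefore still requires $c$ to be admissible, and still returns $e^{c\|x\|_{L^2}^2}$.
\item Splitting off $Y_M$ puts $\exp(C\int_0^t\|Y_M(s)\|_{L^\infty}^2\,ds)$ into the bound. Lemma~\ref{lem:Y Linf bound} gives only polynomial moments, so it does not control this. Since $Y_M$ is Gaussian, this expectation is finite only for $C$ small relative to $Q$ and $t$.
\end{itemize}

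The missing idea is that the coupling is \emph{subquadratic} in $\nabla X^x_M$. By Gagliardo--Nirenberg, $\|\eta\|_{L^4}^2\lesssim\|\eta\|_{L^2}^{3/2}\|\nabla\eta\|_{L^2}^{1/2}$, hence
\[
2|\langle\nabla X^x_M,\eta^2\rangle_{L^2}|\le\|\nabla\eta\|_{L^2}^2+C\|\nabla X^x_M\|_{L^2}^{4/3}\|\eta\|_{L^2}^2\le\|\nabla\eta\|_{L^2}^2+\big(\epsilon\|\nabla X^x_M\|_{L^2}^2+C_\epsilon\big)\|\eta\|_{L^2}^2 .
\]
Therefore $\|\eta^{h,x}_M(t)\|_{L^2}^2\le\exp\big(\epsilon\int_0^t\|\nabla X^x_M(s)\|_{L^2}^2\,ds+C_\epsilon t\big)\|h\|_{L^2}^2$ for every $\epsilon>0$. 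This is what lets \eqref{eq:X_M exp bound deterministic initial} apply for any $Q$ (take $\epsilon<1/(2\|Q\|_{\mathcal{L}(L^2)})$) and what produces $e^{\epsilon\|x\|_{L^2}^2}$. The growth of $C_\epsilon$ as $\epsilon\downarrow0$ is compatible with a constant that is non-decreasing in $\tr(Q)$. You do mention Gagliardo--Nirenberg, but you then conclude with $\exp(c\int_0^t\|\nabla X\|_{L^2}^2)$, missing that the exponent $4/3<2$ is the whole point.

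The same constraint governs the singular directions and $\zeta$. The argument must be arranged so that this small-exponent factor is the only exponential one, and $X^x_M$ otherwise enters only polynomially. This is how the factor $1+\|(-A)^{\frac14+\delta}x\|_{L^2}^{16}$ arises. If you close the mild equation for $\eta^{(-A)^\alpha h,x}_M$ by a singular Gronwall inequality, as your Step~1 suggests, you get a factor of the type $\exp(Ct\sup_{s\le t}\|X^x_M(s)\|_{L^\infty}^2)$. That fails for the same reason.

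Finally, your opening reduction is misstated. $D\varphi(X^x_M(t))$ is merely a bounded functional on $L^2$, so no duality can move $(-A)^{\beta}$ onto it. Since
\[
\|(-A)^\beta D^2u^Q_M(t,x)(-A)^\alpha\|_{\mathcal{L}(H_M)}=\sup_{\|g\|_{L^2}=\|h\|_{L^2}=1}|D^2u^Q_M(t,x).((-A)^\alpha g,(-A)^\beta h)|,
\]
what must be bounded is $\mathbb{E}\|\zeta^{(-A)^\alpha g,(-A)^\beta h,x}_M(t)\|_{L^2}$. Its forcing $2B_M[\eta^{(-A)^\alpha g,x}_M,\eta^{(-A)^\beta h,x}_M]$ carries both singularities. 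The quantity $\|(-A)^{-\beta}\zeta^{(-A)^\alpha g,h,x}_M(t)\|_{L^2}$ that you wrote is weaker and does not imply the estimate. Your Step~2, with $s^{-\alpha-\cdots}$ in the Beta integral, in effect treats the correct object, so this last point is repairable.
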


\begin{remark} \label{rem:dependence in Q}
 In fact, the constant $C_{\alpha,\beta,\delta, \epsilon,T}(Q,\varphi)$ above may be chosen such that $C_{\alpha,\beta,\delta, \epsilon,T}(Q,\varphi) = C_{\alpha,\beta,\delta, \epsilon,T}(\varphi)F_{T}(\tr(Q))$, where $F_{T}:[0,\infty)\to (0,\infty)$ is non-decreasing. This is because the dependence in $Q$ in the proofs of \cite{Brehier_Cox_Millet_2024} come from Lemmas proven in \cite[Section 3]{Brehier_Cox_Millet_2024}, which we revise in Section~\ref{ssec:bounds} to have more precise dependencies in $Q$.
\end{remark}

\section{Weak convergence}\label{sec:weak bound}
In this section we establish upper bounds for the quantity
\begin{equation}
|\E[\varphi(X^{Q_1})] - \E[\varphi(X^{Q_2})]|,
\end{equation}
where $X^{Q_1}$ and $X^{Q_2}$ are the solutions to~\eqref{eq:Burgers} with $Q=Q_1$ and $Q=Q_2$, $Q_1, Q_2\in \calL_1(L^2)$ are positive self-adjoint, and $\varphi\in C^2(L^2,\R)$ is sufficiently nice; see Corollary~\ref{cor:weak conv X X^{Q_2} X^{Q_2}_M} below. These bounds are obtained by first proving analogous, dimension-independent bounds for the Galerkin approximations, see Theorem~\ref{thm:weak perturbation} below. By combining Theorem~\ref{thm:weak perturbation} with the weak convergence rates for spectral Galerkin approximations of the stochastic Burgers equation recently obtained in~\cite{Brehier_Cox_Millet_2024}, we obtain the desired bounds. Note that this approach also allows us to establish weak convergence rates for an approximation obtained by applying a spectral Galerkin approximation in the spatial parameter and considering a Karhunen-Lo\'eve approximation of the noise; see Corollary~\ref{cor: weak conv X X^{Q_N} X^{Q_N}_M}.

\begin{theorem} \label{thm:weak perturbation}
    Suppose there exist $\gamma_0 >0$, $p>32$ and $\delta_0 >0$ such that $\EE{\exp(\gamma_0\|X_0\|_{L^2}^2)} < \infty$ and $\EE{\|(-A)^{\frac{1}{4}+\delta_0}X_0\|_{L^2}^p} < \infty$.  Let $K_{\max} \in (0,\infty)$ and let $Q_1,Q_2 \in \mathcal{L}_1(L^2)$ be positive and self-adjoint, such that $\max\{\tr(Q_1),\tr(Q_2)\} \leq K_{\max}$ and let $X^{Q_1}_M$, $X^{Q_2}_M$ ($M\in \N$) be solutions to~\eqref{eq:Galerkin approx} with $Q=Q_1$ and $Q=Q_2$. Then for all $\alpha \in [0,\min\{1,\frac{3}{4}+\delta_0\})$ there exists $C_{\alpha,\gamma_0,\delta_0,p,T}(X_0,K_{\max},\varphi) \in (0,\infty)$ such that
    \begin{align*}
        \sup_{M\in \N}\big|\mathbb{E}\big[\varphi(X^{Q_1}_M(T))\big] - \mathbb{E}\big[\varphi(X^{Q_2}_M(T))\big]\big| 
        \leq C_{\alpha,\gamma_0,\delta_0,p,T}(X_0,K_{\max},\varphi)
        \| (-A)^{-\alpha} (Q_1-Q_2) \|_{\calL_1(L^2)}.
    \end{align*}
\end{theorem}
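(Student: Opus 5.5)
We follow the standard Kolmogorov-equation approach. Fix $M\in\N$ and let $u_M:=u^{Q_2}_M$ be the solution of the Kolmogorov equation associated with $Q_2$ (Theorem~\ref{Kolm thm}). Since $X_0$ is independent of the noise and $u_M(T,\cdot)$ is the transition semigroup applied to $\varphi$, we have $\E[\varphi(X^{Q_2}_M(T))]=\E[u_M(T,P_MX_0)]$ and $\E[\varphi(X^{Q_1}_M(T))]=\E[u_M(0,X^{Q_1}_M(T))]$. We apply It\^o's formula to $s\mapsto u_M(T-s,X^{Q_1}_M(s))$ on $[0,T-\tau]$ and let $\tau\downarrow 0$, justified by the continuity in Theorem~\ref{Kolm thm}. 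The drift terms $Du_M.(AX+B_M(X))$ cancel against $-\partial_t u_M$ via~\eqref{eq:Kolmogorov}, and the martingale term has zero expectation given the moment bounds of Section~\ref{ssec:bounds}. What remains is
\begin{equation*}
\E[\varphi(X^{Q_1}_M(T))]-\E[\varphi(X^{Q_2}_M(T))]=\frac12\int_0^T\E\Big[\tr\big(D^2u_M(T-s,X^{Q_1}_M(s))P_M(Q_1-Q_2)P_M\big)\Big]\,ds .
\end{equation*}

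To estimate the trace, we write $P_M(Q_1-Q_2)P_M=(-A)^{\alpha}P_M(-A)^{-\alpha}(Q_1-Q_2)P_M$, using that $P_M$ commutes with $A$. The trace duality $|\tr(ST)|\le\|S\|_{\calL(H)}\|T\|_{\calL_1(H)}$ and the ideal property~\eqref{eq:ideal} then give
\begin{equation*}
\big|\tr\big(D^2u_M(t,x)P_M(Q_1-Q_2)P_M\big)\big|\le \|D^2u_M(t,x)(-A)^{\alpha}\|_{\calL(H_M)}\|(-A)^{-\alpha}(Q_1-Q_2)\|_{\calL_1(L^2)}.
\end{equation*}
Lemma~\ref{lem:D^2u bound} with $\beta=0$ and $\alpha<1$, together with Remark~\ref{rem:dependence in Q} and $\tr(Q_2)\le K_{\max}$, bounds the operator norm by $C(\varphi)F_T(K_{\max})(T-s)^{-\alpha}\Psi_{\delta,\epsilon,16}(X^{Q_1}_M(s))$, with $\delta,\epsilon>0$ small. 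The factor $(T-s)^{-\alpha}$ is integrable on $[0,T]$ precisely because $\alpha<1$.

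It remains to bound $\sup_{s}\E[\Psi_{\delta,\epsilon,16}(X^{Q_1}_M(s))]$ uniformly in $M$, which is the main obstacle. Lemma~\ref{lem:sup of psi} does this with $q=16<p/2$ (this is where $p>32$ enters) and $\delta<\delta_0$. Two constraints must be checked. First, $\epsilon$ must be admissible for Lemma~\ref{lem:sup of psi} uniformly in $Q_1$; since $\|Q_1\|_{\calL(L^2)}\le\tr(Q_1)\le K_{\max}$, it suffices to take $\epsilon<\frac{(p-32)\gamma_0}{p(1+2\gamma_0K_{\max})}$, which is allowed because Lemma~\ref{lem:D^2u bound} holds for arbitrary $\epsilon>0$. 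Second, the constant in Lemma~\ref{lem:sup of psi} must be controlled by $F_{p,T}(K_{\max})$, using monotonicity of $F_{p,T}$.

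The role of the restriction $\alpha<\frac34+\delta_0$ should be reconciled with the stated range: the $\delta$ used in Lemma~\ref{lem:D^2u bound} must be below $\delta_0$, while $\alpha<1$ handles the time singularity. If the proof of Lemma~\ref{lem:D^2u bound} in~\cite{Brehier_Cox_Millet_2024} in fact requires extra spatial regularity of $x$ for $\alpha$ close to $1$, the constraint $\alpha<\frac34+\delta_0$ is exactly where this enters, and we would choose $\delta\in(0,\delta_0)$ accordingly. Integrating in $s$ and taking the supremum over $M$ then yields the claim.
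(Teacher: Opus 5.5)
Your proposal is correct and follows essentially the paper's proof. It\^o's formula together with the Kolmogorov equation reduces the error to $\frac12\int_0^T\E\big[\tr\big(D^2u_M\,P_M(Q_1-Q_2)P_M\big)\big]\,dt$, which is then bounded using Lemma~\ref{lem:D^2u bound} with $\beta=0$ and Lemma~\ref{lem:sup of psi} with $q=16$, with $\epsilon$ chosen uniformly via $K_{\max}$. The only difference is cosmetic: you run the Kolmogorov solution $u^{Q_2}_M$ along $X^{Q_1}_M$, whereas the paper runs $u^{Q_1}_M$ along $X^{Q_2}_M$. Also, as in the paper, only $\alpha<1$ is actually used, so the restriction $\alpha<\frac34+\delta_0$ plays no role and your speculation about where it enters is unnecessary.
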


We postpone the proof of this result to the end of this section. In order to obtain bounds when $M\rightarrow \infty$, we recall the following:

\begin{theorem}[{\hspace{1sp}\cite[Theorem 5.1]{Brehier_Cox_Millet_2024}}] \label{thm: weak rates Galerkin approx}
\!Assume there exist $\gamma_0,\delta_0>0$ and $p>32$\! such that $\EE{\exp(\gamma_0\|X_0\|_{L^2}^2)} < \infty$ and \mbox{$\EE{\|(-A)^{\frac{1}{4}+\delta_0}X_0\|_{L^2}^p} < \infty$.} Let $Q\in \mathcal{L}_1(L^2)$ be positive and self-adjoint and let $X^{Q}$, $X^{Q}_M$ ($M\in \N$) be solutions to~\eqref{eq:Burgers} and~\eqref{eq:Galerkin approx}. Let $\varphi:L^2 \to \R$ be twice Fréchet differentiable with bounded, continuous first and second order derivatives. Then, for all $\alpha \in [0,\min\{1,\frac{3}{4}+\delta_0\})$ there exists $C_{\alpha,\gamma_0,\delta_0,p,T}(Q,\varphi)\in (0,\infty)$ and an increasing function $F_{p,T}\colon [0,\infty)\rightarrow [1,\infty)$ such that for all $M\in \N$
\begin{multline}
    \big|\mathbb{E}\big[\varphi(X^Q(T))\big] - \mathbb{E}\big[\varphi(X_M^Q(T))\big]\big| \\ \leq C_{\alpha,\gamma_0,\delta_0,p,T}(\varphi) F_{p,T}(\tr(Q)) \left(1+\EE{\exp(\gamma_0 \|X_0\|_{L^2}^2)}\right)^2\left(1+\EE{\|(-A)^{\frac{1}{4}+\delta_0}X_0\|^{p}_{L^2}}\right)M^{-2\alpha}.\label{eq:weak conv X_M to X}
\end{multline}
\end{theorem}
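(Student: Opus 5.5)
The plan is to compare $X^Q_M$ with a finer Galerkin approximation $X^Q_N$, $N>M$, through the Kolmogorov equation of Theorem~\ref{Kolm thm}, obtain a bound uniform in $N$, and then let $N\to\infty$. For the limit step I will use that $X^Q_N(T)\to X^Q(T)$ in $L^2(\Omega;L^2)$ (this is the classical strong convergence of spectral Galerkin approximations; compare the perturbation argument of Section~\ref{sec: strong bound}). Since $D\varphi$ is bounded, this gives $\E[\varphi(X^Q_N(T))]\to\E[\varphi(X^Q(T))]$. It therefore suffices to bound $|\E[\varphi(X^Q_N(T))]-\E[\varphi(X^Q_M(T))]|$ by the right-hand side of \eqref{eq:weak conv X_M to X}, uniformly in $N>M$.

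Fix $N>M$ and let $u_N=u^Q_N$. Because $H_M\subseteq H_N$, $X^Q_M$ is an $H_N$-valued It\^o process, so It\^o's formula applies to $t\mapsto u_N(T-t,X^Q_M(t))$. Using \eqref{eq:Kolmogorov} to cancel the time derivative, and using $A_N x=A_M x$ for $x\in H_M$, I expect
\begin{align*}
\E[\varphi(X^Q_M(T))]-\E[\varphi(X^Q_N(T))]
&= \E\big[u_N(T,P_MX_0)-u_N(T,P_NX_0)\big]\\
&\quad -\E\int_0^T Du_N(T-t,X^Q_M(t)).\big((P_N-P_M)B(X^Q_M(t))\big)\,dt\\
&\quad +\tfrac12\E\int_0^T \tr\Big(D^2u_N(T-t,X^Q_M(t))\big(P_MQP_M-P_NQP_N\big)\Big)\,dt .
\end{align*}
Here I used $B_N(x)-B_M(x)=(P_N-P_M)B(x)$ for $x\in H_M$. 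I will bound the three terms separately.

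\emph{Noise term.} Write $P_NQP_N-P_MQP_M=(P_N-P_M)QP_N+P_MQ(P_N-P_M)$ and insert $(-A)^{\alpha}(-A)^{-\alpha}$ next to $P_N-P_M$, for $\alpha<1$. By the ideal property \eqref{eq:ideal}, each trace is at most
\[
\|D^2u_N(T-t,x)(-A)^{\alpha}\|_{\calL(H_N)}\,\|(-A)^{-\alpha}(P_N-P_M)\|_{\calL(L^2)}\,\tr(Q).
\]
Lemma~\ref{lem:D^2u bound} together with Remark~\ref{rem:dependence in Q} bounds the first factor by $(T-t)^{-\alpha}\Psi_{\delta,\epsilon,16}(x)$ times a constant of the form $C(\varphi)F_T(\tr Q)$. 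The second factor is at most $(\pi M)^{-2\alpha}$. The time singularity is integrable because $\alpha<1$, and Lemma~\ref{lem:sup of psi} bounds $\E\sup_t\Psi_{\delta,\epsilon,16}(X^Q_M(t))$. Taking $\epsilon$ small is needed here, and $q=16<p/2$ is exactly why $p>32$ is assumed.

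\emph{Initial term.} The mean value theorem gives a factor $Du_N(T,\xi)(-A)^{\alpha}$ acting on $(-A)^{-\alpha}(P_N-P_M)X_0$, with $\xi$ on the segment between $P_MX_0$ and $P_NX_0$. I need the companion first-derivative estimate $\|Du_N(t,x)(-A)^{\gamma}\|\lesssim t^{-\gamma}\Psi(x)$ for $\gamma<1$, which is \cite[Theorem 4.1(i)]{Brehier_Cox_Millet_2024}. Since $t=T$ there is no singularity. Instead of spending $(-A)^{-\alpha}$ on $P_N-P_M$, I can move $(-A)^{\frac14+\delta_0}$ onto $X_0$, which improves the decay in $M$. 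In both cases the moments are controlled by the exponential and $(-A)^{\frac14+\delta_0}$-moments of $X_0$.

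\emph{Nonlinear term.} I expect this to be the main obstacle; it is where the restriction $\alpha<\frac34+\delta_0$ enters. Crude $H^{-1}$ regularity of $B(x)=\nabla(x^2)$ only gives a rate $M^{-1+}$. I will instead use the product estimate
\[
\|(-A)^{-\frac14+\delta}B(x)\|_{L^2}\lesssim \|x\|_{L^\infty}\,\|(-A)^{\frac14+\delta}x\|_{L^2}\lesssim\|(-A)^{\frac14+\delta}x\|_{L^2}^2,
\]
which follows from $\nabla x\in D((-A)^{-\frac14+\delta})$, the fact that $x\in L^\infty\cap D((-A)^{\frac14+\delta})$ acts as a multiplier there, and \eqref{eq:Linf bound by L2}. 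Then, with $\gamma=\alpha+\frac14-\delta<1$,
\[
\big|Du_N(s,x).\big((P_N-P_M)B(x)\big)\big|\le \|Du_N(s,x)(-A)^{\gamma}\|\,\|(-A)^{-\alpha}(P_N-P_M)\|\,\|(-A)^{-\frac14+\delta}B(x)\|\lesssim s^{-\gamma}M^{-2\alpha}\Psi(x).
\]
The time singularity $s^{-\gamma}$ is integrable. The extra quadratic power of $\|(-A)^{\frac14+\delta}x\|_{L^2}$ is absorbed into $\Psi_{\delta,\epsilon,q}$ with $q$ increased, which is still covered by Lemma~\ref{lem:sup of psi} for $p>32$ after adjusting exponents. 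I also track that every constant depends on $Q$ only through a non-decreasing function of $\tr(Q)$. Collecting the three terms yields \eqref{eq:weak conv X_M to X}.
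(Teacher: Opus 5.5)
Your proposal is correct and follows essentially the argument the paper relies on. The paper does not reprove the statement. It cites \cite[Theorem 5.1]{Brehier_Cox_Millet_2024}, which (judging from the tools the paper borrows from it) is proved by the same Kolmogorov-equation comparison of $X^Q_M$ with $X^Q_N$ that you propose: It\^o's formula for $u^Q_N(T-t,X^Q_M(t))$, exactly as in the proof of Theorem~\ref{thm:weak perturbation}, an initial/nonlinear/noise splitting controlled by \cite[Theorem 4.1]{Brehier_Cox_Millet_2024} and Lemma~\ref{lem:sup of psi}, and then $N\to\infty$. The paper's only addition, the monotone dependence on $\tr(Q)$, comes as you indicate from feeding the refined Lemmas~\ref{lem: X_M sup +exp moment bounds}--\ref{lem:sup of psi} into that argument. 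You should make this explicit for the first-derivative bound \cite[Theorem 4.1(i)]{Brehier_Cox_Millet_2024} as well, and check that its polynomial exponent plus the extra square from the nonlinearity stays below $p/2$.
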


\begin{remark}
Theorem 5.1 in \cite{Brehier_Cox_Millet_2024} is actually slightly less precise than the statement above: in \cite{Brehier_Cox_Millet_2024}, a bound is established involving a constant depending on $Q$, but it is not stated that this constant increases in $\tr(Q)$. However, in view of the more precise formulation of Lemmas~\ref{lem: X_M sup +exp moment bounds}-\ref{lem:(-A)X_M bound} (compared to~\cite[Lemmas 3.1-3.3]{Brehier_Cox_Millet_2024}), we can deduce the slightly stronger statement provided above. This is crucial for the following corollaries.
\end{remark}
Combining Theorems~\ref{thm:weak perturbation} and~\ref{thm: weak rates Galerkin approx} we obtain:
\begin{corollary}\label{cor:weak conv X X^{Q_2} X^{Q_2}_M} 
Assume that there exist $\gamma_0,\delta_0>0$ and $p>32$ such that $\EE{\exp(\gamma_0\|X_0\|_{L^2}^2)} < \infty$ and \mbox{$\EE{\|(-A)^{\frac{1}{4}+\delta_0}X_0\|_{L^2}^p} < \infty$.} Let $K_{\max} \in (0,\infty)$ and let $Q_1,Q_2 \in \mathcal{L}_1(L^2)$ be positive and self-adjoint, such that $\max\{\tr(Q_1),\tr(Q_2)\} \leq K_{\max}$ and let $X^{Q_1}$, $X^{Q_2}$, and $X^{Q_2}_M$ ($M\in \N$) be solutions to~\eqref{eq:Burgers} with $Q=Q_1$, to~\eqref{eq:Burgers} with $Q=Q_2$, and to~\eqref{eq:Galerkin approx} with $Q=Q_2$. Let $\varphi:L^2 \to \R$ be twice Fréchet differentiable with bounded, continuous first and second order derivatives. Then, for all $\alpha \in [0,\min\{1,\frac{3}{4}+\delta_0\})$ there exists $C_{\alpha,\gamma_0,\delta_0,p,T}(X_0,K_{\max}, \varphi)\in (0,\infty)$ such that
    \begin{align}\label{eq:weak X^Q_1 vs X^Q_2}
        \big|\mathbb{E}\big[\varphi(X^{Q_1}(T))\big] - \mathbb{E}\big[\varphi(X^{Q_2}(T))\big]\big| 
        \leq C_{\alpha,\gamma_0,\delta_0,p,T}(X_0,K_{\max}, \varphi)
    \| (-A)^{-\alpha} (Q_1-Q_2) \|_{\calL_1(L^2)},
    \end{align}
and, for all $M\in \N$:
    \begin{align*}
        \big|\mathbb{E}\big[\varphi(X^{Q_1}(T))\big] - \mathbb{E}\big[\varphi(X^{Q_2}_M(T))\big]\big| \leq C_{\alpha,\gamma_0,\delta_0,p,T}(X_0,K_{\max}, \varphi)
        \left( M^{-2\alpha} + 
    \| (-A)^{-\alpha} (Q_1-Q_2) \|_{\calL_1(L^2)} \right).
    \end{align*}
\end{corollary}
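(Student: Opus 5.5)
The plan is a triangle inequality through the Galerkin approximations. For fixed $M\in\N$ we write
\begin{align*}
\big|\E[\varphi(X^{Q_1}(T))] - \E[\varphi(X^{Q_2}(T))]\big|
&\leq \big|\E[\varphi(X^{Q_1}(T))] - \E[\varphi(X^{Q_1}_M(T))]\big|
+ \big|\E[\varphi(X^{Q_1}_M(T))] - \E[\varphi(X^{Q_2}_M(T))]\big| \\
&\quad + \big|\E[\varphi(X^{Q_2}_M(T))] - \E[\varphi(X^{Q_2}(T))]\big|.
\end{align*}
The middle term is bounded, uniformly in $M$, by Theorem~\ref{thm:weak perturbation}. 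This gives the constant $C_{\alpha,\gamma_0,\delta_0,p,T}(X_0,K_{\max},\varphi)\,\|(-A)^{-\alpha}(Q_1-Q_2)\|_{\calL_1(L^2)}$, and the hypotheses of that theorem coincide with those of the corollary.

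The first and third terms are bounded by Theorem~\ref{thm: weak rates Galerkin approx} with $Q=Q_1$ and $Q=Q_2$ respectively. The point to check is that the resulting constant depends on $Q_i$ only through $\tr(Q_i)$. This holds because the factor $F_{p,T}(\tr(Q_i))$ is increasing, so $F_{p,T}(\tr(Q_i))\le F_{p,T}(K_{\max})$. The remaining factors $C_{\alpha,\gamma_0,\delta_0,p,T}(\varphi)$ and the moments of $X_0$ are independent of $Q_i$. Hence each of these terms is at most $\tilde C(X_0,K_{\max},\varphi)M^{-2\alpha}$.

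For the first inequality \eqref{eq:weak X^Q_1 vs X^Q_2} we let $M\to\infty$. The left-hand side does not depend on $M$, the outer terms vanish since $\alpha>0$, and the middle bound is uniform in $M$. (For $\alpha=0$ the monotonicity $\|(-A)^{-\alpha}\cdot\|_{\calL_1}\le\|(-A)^{-\alpha'}\cdot\|_{\calL_1}$ lets us use any small $\alpha'>0$ instead, or we can apply the same limiting argument directly, since Theorem~\ref{thm: weak rates Galerkin approx} at any positive exponent already gives convergence.)

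For the second inequality we apply the triangle inequality
\[
|\E\varphi(X^{Q_1}(T))-\E\varphi(X^{Q_2}_M(T))|\le |\E\varphi(X^{Q_1}(T))-\E\varphi(X^{Q_2}(T))|+|\E\varphi(X^{Q_2}(T))-\E\varphi(X^{Q_2}_M(T))|.
\]
The first term is controlled by \eqref{eq:weak X^Q_1 vs X^Q_2}, and the second by Theorem~\ref{thm: weak rates Galerkin approx} with the $K_{\max}$-uniform constant as above. Taking the maximum of the constants finishes the proof.

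The only delicate step is the uniformity of the constants in $Q_1,Q_2$. It rests entirely on the monotone dependence on $\tr(Q)$ recorded in Theorem~\ref{thm: weak rates Galerkin approx} and built into the constant of Theorem~\ref{thm:weak perturbation}. Beyond that the argument is routine.
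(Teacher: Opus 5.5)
Your proposal is correct and is exactly the argument the paper intends. The paper states the corollary as an immediate consequence of Theorems~\ref{thm:weak perturbation} and~\ref{thm: weak rates Galerkin approx}: a triangle inequality through $X^{Q_1}_M, X^{Q_2}_M$, the bound $F_{p,T}(\tr(Q_i))\le F_{p,T}(K_{\max})$, and then $M\to\infty$. One cosmetic slip: in your $\alpha=0$ remark the monotonicity is written backwards; for $\alpha'>0$ one has $\|(-A)^{-\alpha'}(Q_1-Q_2)\|_{\calL_1(L^2)}\le\|Q_1-Q_2\|_{\calL_1(L^2)}$, which is the direction you actually need, and your second alternative (a positive exponent only in Theorem~\ref{thm: weak rates Galerkin approx}) avoids the issue anyway.
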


\begin{remark}[On bounding $\| (-A)^{-\alpha} (Q_1-Q_2) \|_{\calL_1(L^2)}$.] \label{rem:bounding Q}
First of all note that for all $\alpha,\beta \geq 0$ satisfying $0\leq \beta < \alpha -\frac{1}{2}$ we have $\| (-A)^{-\alpha+\beta} \|_{\calL_1(L^2)}<\infty$ by \eqref{eq: A L_1 reg} and (by~\eqref{eq:ideal})
\begin{equation}
\| (-A)^{-\alpha} (Q_1-Q_2) \|_{\calL_1(L^2)}
\leq \| (-A)^{-\alpha+\beta} \|_{\calL_1(L^2)}
\| (-A)^{-\beta}(Q_1-Q_2) \|_{\calL(L^2)}
\end{equation}
(note that the right-hand side involves a operator norm). Thus, under the assumptions of Theorem~\ref{thm:weak perturbation} there exists, for all $\beta \in [0,\min\{\frac{1}{2},\frac{1}{4}+\delta_0\})$, a constant $C_{\beta,\gamma_0,\delta_0,p,T}(X_0,K_{\max},\varphi) \in (0,\infty)$ such that
    \begin{align*}
        \sup_{M\in \N}\big|\mathbb{E}\big[\varphi(X^{Q_1}_M(T))\big] - \mathbb{E}\big[\varphi(X^{Q_2}_M(T))\big]\big|  
        \leq 
        C_{\beta,\gamma_0,\delta_0,p,T}(X_0,K_{\max},\varphi)
        \| (-A)^{-\beta} (Q_1-Q_2) \|_{\calL(L^2)}.
    \end{align*}
Also note that one can obtain better estimates if $Q_1$, $Q_2$, and $A$ are jointly diagonalisable (i.e., if the eigenvectors of $Q_1$ and $Q_2$ correspond to the eigenvectors $(h_k)_{k\in \N}$ of $A$), indeed, in this case we obtain
\begin{equation*}
\| (-A)^{-\alpha} (Q_1-Q_2) \|_{\calL_1(L^2)} = \sum_{k\in \N} (\pi k)^{-2\alpha} |\lambda_{Q_1}(h_k)- \lambda_{Q_2}(h_k)|,
\end{equation*}
where $\lambda_{Q_i}(h_k)$ is the eigenvalue of $Q_i$ ($i\in \{1,2\})$ corresponding to the eigenvector $h_k$.
\end{remark}

In particular, taking $Q_1 = \sum_{k=1}^{\infty}q_k \langle \cdot , e_k\rangle e_k$ and $Q_2 = \sum_{k=1}^{N} q_k \langle \cdot , e_k\rangle e_k$ in Corollary~\ref{cor:weak conv X X^{Q_2} X^{Q_2}_M} and using the rather crude estimate
\begin{equation*}
\| (-A)^{-\frac{3}{4}} (Q_1-Q_2) \|_{\calL_1(L^2)}
\leq C \left\| \sum_{k=N+1}^{\infty}q_k \langle \cdot , e_k\rangle e_k \right\|_{\calL(L^2)} = C q_{N+1}
\end{equation*}
we obtain:

\begin{corollary}\label{cor: weak conv X X^{Q_N} X^{Q_N}_M}
Assume there exist $\delta_0,\gamma_0 >0$ and $p>32$ such that $\EE{\exp(\gamma_0\Ltwo{X_0}^2)} < \infty$ and $\EE{\Ltwo{(-A)^{\frac{1}{4}+\delta_0}X_0}^p} < \infty$. Let $(q_k)_{k\in \N}\in \ell^1$ be a non-increasing sequence of non-negative real numbers and $(e_k)_{k\in \N}$ an orthonormal basis in $L^2$, and let  
\begin{equation}
Q=\sum_{k=1}^{\infty}q_k \langle \cdot , e_k\rangle e_k, \qquad 
Q_N=\sum_{k=1}^{N} q_k \langle \cdot , e_k\rangle e_k.
\end{equation}
Let $X^{Q}$, $X^{Q_N}$, and $X^{Q_N}_M$ ($N, M\in \N$) be solutions to~\eqref{eq:Burgers}  with $Q=Q$, to~\eqref{eq:Burgers} with $Q=Q_N$, and to~\eqref{eq:Galerkin approx} with $Q=Q_N$.
Then, there exists $C_{\gamma_0,\delta_0,p,T}(X_0,Q,\varphi)\in (0,\infty)$ such that
    \begin{align}\label{eq:weak X^Q vs X^Q_N}
        \big|\mathbb{E}\big[\varphi(X^{Q}(T))\big] - \mathbb{E}\big[\varphi(X^{Q_N}(T))\big]\big|
        \leq C_{\gamma_0,\delta_0,p,T}(X_0,Q, \varphi)\,
         q_{N+1},
    \end{align}
and for all $\alpha \in [0,\min\{1,\frac{3}{4}+\delta_0\})$ there exists $C_{\alpha,\gamma_0,\delta_0,p,T}(X_0,Q,\varphi)\in (0,\infty)$ such that for all $M\in \N$
    \begin{align}\label{eq: weak conv X X^{Q_N}_M 2}
        \big|\mathbb{E}\big[\varphi(X^{Q}(T))\big] - \mathbb{E}\big[\varphi(X^{Q_N}_M(T))\big]\big|  
        \leq C_{\alpha,\gamma_0,\delta_0,p,T}(X_0,Q, \varphi)
        \left( M^{-2\alpha}
        + q_{N+1} \right).
    \end{align}
\end{corollary}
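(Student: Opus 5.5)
This follows by applying Corollary~\ref{cor:weak conv X X^{Q_2} X^{Q_2}_M} with $Q_1=Q$ and $Q_2=Q_N$, and then estimating the trace-class quantity on its right-hand side through Remark~\ref{rem:bounding Q}.

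\textbf{Step 1: admissibility.} Both $Q$ and $Q_N$ are positive and self-adjoint. Since $q_k\ge 0$ and $(q_k)\in\ell^1$, we have $\tr(Q_N)\le\tr(Q)=\sum_k q_k<\infty$. We may therefore take $K_{\max}=\tr(Q)$, which depends only on $Q$ and not on $N$. This uniformity in $N$ is the only delicate point. It is exactly what Corollary~\ref{cor:weak conv X X^{Q_2} X^{Q_2}_M} provides, because its constant depends on $Q_1,Q_2$ only through $K_{\max}$. This in turn rests on the monotone dependence on $\tr(Q)$ in Lemmas~\ref{lem: X_M sup +exp moment bounds}--\ref{lem:sup of psi} and Remark~\ref{rem:dependence in Q}.

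\textbf{Step 2: bounding the trace-class norm.} Fix some $\alpha\in(\tfrac12,\min\{1,\tfrac34+\delta_0\})$, for instance $\alpha=\min\{\tfrac34,\tfrac58+\tfrac{\delta_0}{2}\}$. Then $(-A)^{-\alpha}\in\calL_1(L^2)$ by \eqref{eq: A L_1 reg}. By \eqref{eq:ideal} (Remark~\ref{rem:bounding Q} with $\beta=0$),
\[
\|(-A)^{-\alpha}(Q-Q_N)\|_{\calL_1(L^2)}\le \|(-A)^{-\alpha}\|_{\calL_1(L^2)}\,\|Q-Q_N\|_{\calL(L^2)} .
\]
Now $Q-Q_N=\sum_{k>N}q_k\langle\cdot,e_k\rangle e_k$ is diagonal in the orthonormal basis $(e_k)$ with non-negative eigenvalues $(q_k)_{k>N}$. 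Since the sequence is non-increasing, its operator norm equals $\sup_{k>N}q_k=q_{N+1}$. Inserting this into \eqref{eq:weak X^Q_1 vs X^Q_2} gives \eqref{eq:weak X^Q vs X^Q_N}, with constant $C_{\alpha,\gamma_0,\delta_0,p,T}(X_0,\tr(Q),\varphi)\,\|(-A)^{-\alpha}\|_{\calL_1(L^2)}$. Because $\alpha$ is fixed in terms of $\delta_0$, this constant has the stated dependence.

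\textbf{Step 3: the Galerkin bound.} For \eqref{eq: weak conv X X^{Q_N}_M 2}, let $\alpha\in[0,\min\{1,\tfrac34+\delta_0\})$ be arbitrary. Use the triangle inequality
\[
|\E\varphi(X^Q(T))-\E\varphi(X^{Q_N}_M(T))|\le |\E\varphi(X^Q(T))-\E\varphi(X^{Q_N}(T))|+|\E\varphi(X^{Q_N}(T))-\E\varphi(X^{Q_N}_M(T))| .
\]
The first term is bounded by Step 2 by $Cq_{N+1}$. The second term is bounded by Theorem~\ref{thm: weak rates Galerkin approx} applied with $Q=Q_N$. Since $F_{p,T}$ is increasing, $F_{p,T}(\tr(Q_N))\le F_{p,T}(\tr(Q))$. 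This gives $CM^{-2\alpha}$ with a constant independent of $N$. Alternatively, one can apply the second estimate of Corollary~\ref{cor:weak conv X X^{Q_2} X^{Q_2}_M} directly and bound its trace term as in Step 2, taking the auxiliary exponent $\alpha'>\tfrac12$ separately from $\alpha$. Adding the two bounds gives the claim.
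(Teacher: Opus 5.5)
Your proof is correct and follows the paper's route. You apply Corollary~\ref{cor:weak conv X X^{Q_2} X^{Q_2}_M} with $Q_1=Q$, $Q_2=Q_N$ and $K_{\max}=\tr(Q)$, then use the ideal-property bound $\|(-A)^{-\alpha}(Q-Q_N)\|_{\calL_1(L^2)}\le \|(-A)^{-\alpha}\|_{\calL_1(L^2)}\,q_{N+1}$ for a fixed $\alpha\in(\tfrac12,\min\{1,\tfrac34+\delta_0\})$; the paper simply takes $\alpha=\tfrac34$, which is always admissible. In the Galerkin bound you decouple the exponent on $M$ from the trace-class exponent, a detail the paper leaves implicit: for $\alpha\le\tfrac12$ the term $\|(-A)^{-\alpha}(Q-Q_N)\|_{\calL_1(L^2)}$ need not be $O(q_{N+1})$.
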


\begin{proof}[Proof of Theorem~\ref{thm:weak perturbation}]
    Let $M\in \N$. Firstly, note that $\EE{\exp(\gamma_0\|X_0\|_{L^2}^2)} < \infty$ implies $\EE{\Ltwo{X_0}^4}<\infty$. Then, by the tower property for conditional expectation, the definition of $u^{Q_1}_M$, \cite[Lemma 5.3]{Brehier_Cox_Millet_2024}, and the fact that $X^{Q_1}_M(0) = P_MX_0 = X^{Q_2}_{M}(0)$, 
\begin{align*}
    \EE{\varphi(X^{Q_1}_M(T))} - \EE{\varphi(X^{Q_2}_M(T))}
    & \hspace{0.3em}= \EE{u^{Q_1}_M(T,X^{Q_2}_M(0))-u^{Q_1}_M(0,X^{Q_2}_M(T))} =\colon \text{err}.
\end{align*}
Applying Itô formula to the process $u^{Q_1}_M(T-t,X^{Q_2}_M(t))$ on $t\in [0,T]$, recalling the evolution equation \eqref{eq:Galerkin approx} (with $Q = Q_2$) for $X^{Q_2}_M$, and taking expectation, one obtains
\begin{align*}
    \text{err} = &\int_0^T \mathbb{E}\Big[\frac{\partial u^{Q_1}_M}{\partial t}(T-t,X^{Q_2}_M(t))\Big] dt \\
    &-\int_0^T \EE{Du^{Q_1}_M(T-t,X^{Q_2}_M(t)).(A X^{Q_2}_M(t) + B_M(X^{Q_2}_M(t)))}dt \\
    &-\frac{1}{2}\int_0^T \EE{\tr(D^2 u^{Q_1}_M(T-t,X^{Q_2}_M(t))P_M Q_2 P_M )}dt.
\end{align*}
Recalling that $u^{Q_1}_M$ satisfies the Kolmogorov equation \eqref{eq:Kolmogorov}, above becomes
\begin{align}
    \text{err} = \frac{1}{2}\int_0^T \EE{\tr(D^2 u^{Q_1}_M(T-t,X^{Q_2}_M(t))P_M (Q_1-Q_2) P_M )}dt. \label{eq: weak proof 1}
\end{align}
Now, by Lemma~\ref{lem:D^2u bound} (see also Remark~\ref{rem:dependence in Q}), for $\delta, \epsilon>0$ (to be specified later) there exists $C_{\alpha,\delta,\epsilon,T}(\varphi) \in (0,\infty)$ and non-decreasing $F_{p,T}:[0,\infty)\to (0,\infty)$ such that 
\begin{align*}
 &\tr(D^2 u^{Q_1}_M(T-t,X^{Q_2}_M(t))P_M(Q_1-Q_2)P_M )\\
 &\leq \| D^2 u^{Q_1}_M(T-t,X^{Q_2}_M(t))P_M(Q_1-Q_2)P_M  \|_{\calL_1(L^2)}
 \\
    & \leq \| D^2 u^{Q_1}_M(T-t,X^{Q_2}_M(t)) (-A)^{\alpha} \|_{\calL(H_M)}  \| (-A)^{-\alpha} (Q_1-Q_2) \|_{\calL_1(L^2)}
    \\
    &\leq C_{\alpha,\delta,\epsilon,T}(\varphi) F_{p,T}(\tr(Q_1))(T-t)^{-\alpha} e^{\epsilon\|X^{Q_2}_M(t)\|^2_{L^2}}
    \left(1+\|(-A)^{\frac{1}{4}+\delta}X^{Q_2}_M(t)\|^{16}_{L^2}\right)
    \| (-A)^{-\alpha} (Q_1-Q_2) \|_{\calL_1(L^2)}.
\end{align*}
Inserting this in \eqref{eq: weak proof 1}, one obtains
\begin{align*}
    |\text{err}| &\leq  C_{\alpha,\delta,\epsilon,T}(\varphi) F_{p,T}(K_{\max})\int_0^T (T-t)^{-\alpha} \EE{\Psi_{\delta,\epsilon,16}(X^{Q_2}_M(t))} dt \cdot \| (-A)^{-\alpha} (Q_1-Q_2) \|_{\calL_1(L^2)}\\
    &\leq C_{\alpha,\delta,\epsilon,T}(\varphi) F_{p,T}(K_{\max}) 
    \tfrac{T^{1-\alpha}}{1-\alpha} \,\EE{\sup_{t\in [0,T]}\Psi_{\delta,\epsilon,16}(X^{Q_2}_M(t))}
    \| (-A)^{-\alpha} (Q_1-Q_2) \|_{\calL_1(L^2)}.
\end{align*}
Lastly, choose $\delta = \frac{\delta_0}{2}, \epsilon = \frac{p-32}{2p}\frac{\gamma_0}{1+2\gamma_0K_{\max}}$. Note that since $\|Q_2\|_{\mathcal{L}(L^2)} \leq \tr(Q_2) \leq K_{\max}$, one has $\epsilon \in (0,\frac{p-32}{p}\frac{\gamma_0}{1+2\gamma_0\|Q_2\|_{\mathcal{L}(L^2)}})$. Hence, by Lemma~\ref{lem:sup of psi} (with $Q=Q_2,q=16$) and noting that the constant can be made independent of $Q_2$ by using $\tr(Q_2) \leq K_{\max}$, we conclude the result.
\end{proof}

\section{Strong convergence}\label{sec: strong bound}

The main aim of this section is to establish bounds for the strong error after the perturbation of the noise, i.e., to provide bounds for
\begin{align*}
    \sup_{t\in [0,T]}\|X^{Q_1}(t) - X^{Q_2}(t)\|_{L^r(\Omega; L^2)}, \quad r\in [1,\infty).
\end{align*}
where $X^{Q_1}$ and $X^{Q_2}$ are the solutions to~\eqref{eq:Burgers} with $Q=Q_1$ and $Q=Q_2$, $Q_1, Q_2\in \calL_1(L^2)$ are positive self-adjoint; see Corollary~\ref{cor:strong conv X X^{Q_2} X^{Q_2}_M} below. Once again, these bounds are obtained by first proving analogous, dimension-independent bounds for the Galerkin approximations, see Theorem~\ref{thm:strong perturbation} below. By combining Theorem~\ref{thm:strong perturbation} with the strong convergence rates for spectral Galerkin approximations of the stochastic Burgers equation obtained in~\cite{Hutzenthaler_Jentzen_2020}, we obtain the desired bounds. Again, this approach also allows us to establish strong convergence rates for an approximation obtained by applying a spectral Galerkin approximation in the spatial parameter and considering a Karhunen-Lo\'eve approximation of the noise; see Corollary~\ref{cor: strong conv X X^{Q_N} X^{Q_N}_M}.

\begin{theorem} \label{thm:strong perturbation}
    Suppose that there exist $\gamma_0,\delta_0 >0$ and $p>2 $ such that $\EE{\exp(\gamma_0 \|X_0\|_{L^2}^2)} < \infty$ and $\EE{\|(-A)^{\frac{1}{4}+\delta_0}X_0\|_{L^2}^{2p}} < \infty$. Let $K_{\max} \in (0,\infty)$ and let $Q_1,Q_2 \in \mathcal{L}_1(L^2)$ be positive and self-adjoint, such that $\max\{\tr(Q_1),\tr(Q_2)\} \leq K_{\max}$ and let $X^{Q_1}_M$, $X^{Q_2}_M$ ($M\in \N$) be solutions to~\eqref{eq:Galerkin approx} with $Q=Q_1$ and $Q=Q_2$. Then, for all $r\in [1,p)$, $\alpha\in [0,1)$ there exists $C_{\alpha,\gamma_0,\delta_0,p,r,T}(X_0,K_{\max}) \in (0,\infty)$ such that 
    \begin{align*}
        \sup_{M\in \N} \sup_{t\in [0,T]}\|X^{Q_1}_M(t) - X^{Q_2}_M(t)\|_{L^r(\Omega; L^2)} \leq C_{\alpha, \gamma_0,\delta_0,p,r,T}(X_0,K_{\max})
        \big\| (-A)^{-\nicefrac{\alpha}{2}} \big|Q_1^{\nicefrac{1}{2}} -Q_2^{\nicefrac{1}{2}}\big| \big\|_{\calL_2(L^2)}.
    \end{align*}
\end{theorem}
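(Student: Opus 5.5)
We plan to split each Galerkin solution as $X^{Q_i}_M = \tilde X^{Q_i}_M + Y^{i}_M$. Here $Y^i_M(t)=\int_0^t e^{(t-s)A}P_M\,dW^{Q_i}(s)$ is the stochastic convolution from~\eqref{eq:stoch conv perturbation}, and $\tilde X^{Q_i}_M$ solves the random PDE $\partial_t \tilde X^{Q_i}_M = A\tilde X^{Q_i}_M + B_M(\tilde X^{Q_i}_M + Y^i_M)$. We realise $W^{Q_1}$ and $W^{Q_2}$ on the same Brownian motions $(W^{(k)})$ with the same basis in~\eqref{eq:defWQ}. Then
\[
Y^1_M - Y^2_M = \int_0^t e^{(t-s)A}P_M (Q_1^{\nicefrac12}-Q_2^{\nicefrac12})\,dW(s)
\]
is itself a stochastic convolution with a cylindrical Wiener process and operator $S=Q_1^{\nicefrac12}-Q_2^{\nicefrac12}$. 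Because $S$ is self-adjoint, $\|(-A)^{-\nicefrac{\alpha}{2}}S\|_{\calL_2}=\|(-A)^{-\nicefrac{\alpha}{2}}|S|\|_{\calL_2}$: use the polar decomposition $S=U|S|$ with $U$ a partial isometry commuting with $|S|$, so $S^*S=|S|^2$. Arguing as in Proposition~\ref{prop:stoch conv moment bound}, with BDG and Lemma~\ref{lem:A_analytic} applied to $e^{(t-s)A}P_M S$ and the time singularity $(t-s)^{-\alpha}$ integrable for $\alpha<1$, we expect
\[
\|Y^1_M(t)-Y^2_M(t)\|_{L^q(\Omega;L^2)}\le C\big\|(-A)^{-\nicefrac{\alpha}{2}}|S|\big\|_{\calL_2(L^2)}
\]
for every $q$, uniformly in $M$. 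The quantity $\|\nabla(Y^1_M-Y^2_M)\|_{L^2(0,T;L^2)}$ should satisfy a similar bound: one checks $\E\int_0^T\|(-A)^{\nicefrac12}e^{sA}S\|_{\calL_2}^2ds\lesssim \|(-A)^{-\nicefrac{\alpha}{2}}S\|^2_{\calL_2}$ for $\alpha\ge0$.

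Next we estimate the difference $D=\tilde X^{Q_1}_M-\tilde X^{Q_2}_M$. It satisfies $D(0)=0$ and
\[
\tfrac{d}{dt}D = AD + B_M(X^{Q_1}_M)-B_M(X^{Q_2}_M).
\]
Write $X^{Q_1}_M-X^{Q_2}_M = D + \Delta Y$ with $\Delta Y=Y^1_M-Y^2_M$, and note that $B_M(x)-B_M(y)=B_M[x-y,x+y]$. An energy estimate on $\|D\|^2_{L^2}$ then uses the cancellation~\eqref{eq:B_cancelations}. The term $\langle D,B_M[D,X^{Q_1}_M+X^{Q_2}_M]\rangle$ is bounded by $\|D\|_{L^2}\|\nabla D\|_{L^2}\|X^{Q_1}_M+X^{Q_2}_M\|_{L^\infty}$; this is absorbed into $-\|\nabla D\|^2$ at the cost of a Gronwall factor $\exp(c\int_0^t\|X^{Q_1}_M+X^{Q_2}_M\|^2_{L^\infty}ds)$. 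For the term $\langle D,B_M[\Delta Y,X^{Q_1}_M+X^{Q_2}_M]\rangle$ we integrate by parts, which puts derivatives on $D$ and on $\Delta Y$ or $X$. It is bounded by $\|\nabla D\|(\|\Delta Y\|_{L^2}\|X\|_{L^\infty})$ plus a term with $\|\nabla \Delta Y\|$ or with $\|\Delta Y\|_{L^\infty}$. This is precisely the situation of the abstract perturbation result of~\cite{Hutzenthaler_Jentzen_2020} mentioned in the introduction. We would invoke it with the monotonicity-type Lyapunov weight $\exp(\int_0^t \phi(X^{Q_1}_M(s))\,ds)$, where $\phi(x)\approx c\|\nabla x\|^2_{L^2}$ arises from interpolating $\|x\|_{L^\infty}^2\le\|x\|\|\nabla x\|$.

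The main obstacle is controlling the Gronwall exponential factor in $L^{q}(\Omega)$ uniformly in $M$ and in $Q_1,Q_2$ with trace at most $K_{\max}$. Here we apply Hölder and the exponential moment bound of Lemma~\ref{lem: X_M sup +exp moment bounds}(ii). The issue is that this bound only allows small exponents $\beta$, so we must use the perturbation theorem of~\cite{Hutzenthaler_Jentzen_2020} in its form that weights only $\int\|\nabla X\|^2$ with an arbitrarily small constant, trading it against $\sup\|X\|^2$ via Young's inequality. We combine this with Lemma~\ref{lem:Y exp bound} for $Y^i_M$ and with the moments in Lemmas~\ref{lem:L_inf bound of X_M}, \ref{lem:Y Linf bound}, which require $\E\|(-A)^{\frac14+\delta_0}X_0\|^{2p}<\infty$. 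Finally, Hölder with exponents tuned to $r<p$ yields
\[
\|X^{Q_1}_M(t)-X^{Q_2}_M(t)\|_{L^r(\Omega;L^2)}\le \|D(t)\|_{L^r(\Omega;L^2)}+\|\Delta Y(t)\|_{L^r(\Omega;L^2)}\lesssim \big\|(-A)^{-\nicefrac{\alpha}{2}}|S|\big\|_{\calL_2},
\]
with constants depending on $\tr(Q_i)$ only through nondecreasing functions, hence only on $K_{\max}$.
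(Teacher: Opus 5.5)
Your decomposition matches the paper's: $D=\tilde X^{Q_1}_M-\tilde X^{Q_2}_M$ is exactly $-Z_M$ with $Z_M=X^{Q_2}_M-X^{Q_1}_M+Y_M$, and the paper also runs it through \cite[Corollary 2.11]{Hutzenthaler_Jentzen_2020}. However, the decisive estimate, the exponential (Gronwall) factor, fails as you set it up. Write $V:=X^{Q_1}_M+X^{Q_2}_M$. You bound $\langle D,B_M[D,V]\rangle_{L^2}$ by $\|D\|_{L^2}\|\nabla D\|_{L^2}\|V\|_{L^\infty}$ and absorb $\|\nabla D\|_{L^2}^2$. Since only the available dissipation can be used, this leaves a weight $\exp(c\int_0^t\|V\|_{L^\infty}^2\,ds)$ with $c$ bounded below, which must be controlled in $L^q(\Omega)$, $q=(\frac1r-\frac1p)^{-1}$. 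Your remedy is $\|x\|_{L^\infty}^2\le\|x\|_{L^2}\|\nabla x\|_{L^2}$ plus Young with a small $\beta$ in front of $\int_0^T\|\nabla V\|_{L^2}^2\,ds$. This necessarily puts $q^2c^2/(4\beta)$ in front of $\int_0^T\|V\|_{L^2}^2\,ds\le T\sup_t\|V(t)\|_{L^2}^2$. The product of the two coefficients is fixed, so except for small $T$ they cannot both lie below the threshold $\gamma_0/(1+2\gamma_0\|Q\|_{\calL(L^2)})$ of Lemma~\ref{lem: X_M sup +exp moment bounds}(ii), which is the only exponential estimate available for $X^{Q_i}_M$. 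Nor can you hope for large exponential moments of $\sup_t\|V(t)\|_{L^2}^2$: $\sup_t\|V(t)\|_{L^2}\ge 2\|P_MX_0\|_{L^2}$, and only $\E[\exp(\gamma_0\|X_0\|_{L^2}^2)]<\infty$ is assumed. So ``trading against $\sup\|X\|^2$'' never produces a weight whose random part has a small coefficient, and the bound on the exponential factor does not go through.

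The missing idea is to integrate by parts in the opposite direction and trade against the difference itself, whose $L^\infty$-norm is controlled by its own dissipation. By \eqref{eq:B_cancelations}, $\langle D,B_M[D,V]\rangle_{L^2}=-\tfrac12\langle V,B(D)\rangle_{L^2}=\tfrac12\langle D^2,\nabla V\rangle_{L^2}\le\tfrac{1}{32\delta}\|D\|_{L^\infty}^2+2\delta\|\nabla V\|_{L^2}^2\|D\|_{L^2}^2$. The first term satisfies $\tfrac{1}{32\delta}\|D\|_{L^\infty}^2\le\kappa(\delta)\|D\|_{L^2}^2+\tfrac12\|\nabla D\|_{L^2}^2$ and is absorbed by $-\|\nabla D\|_{L^2}^2$. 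The weight becomes $\exp(\int_0^t[\kappa(\delta)+2\delta\|\nabla V\|_{L^2}^2]\,ds)$: the large constant is deterministic and the random part has an arbitrarily small coefficient. After H\"older, Lemma~\ref{lem: X_M sup +exp moment bounds}(ii) therefore applies; the paper takes $\delta=\gamma_0/(20q(1+8\gamma_0K_{\max}))$, and with its split $\nabla Y_M$ also enters and is handled by Lemma~\ref{lem:Y exp bound}.

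A second, smaller error concerns your claim $\int_0^T\|(-A)^{1/2}e^{sA}S\|_{\calL_2(L^2)}^2\,ds\lesssim\|(-A)^{-\alpha/2}S\|_{\calL_2(L^2)}^2$, which is false for $\alpha>0$. The left side equals $\sum_k\tfrac12(1-e^{-2\pi^2k^2T})\|Sh_k\|_{L^2}^2$; for $S=\langle\cdot,h_j\rangle_{L^2}h_j$ it stays of order one while the right side is $(\pi j)^{-2\alpha}$. So $\nabla\Delta Y$ carries no rate. The cross term must be written entirely as $-\langle\nabla D,\Delta Y\,V\rangle_{L^2}$, so that the rate comes only from $\|\Delta Y\|_{L^2}$ and $V$ enters only through $L^\infty$-moments (Lemma~\ref{lem:L_inf bound of X_M}). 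This is what the paper does with $\|Y_M\|_{L^2}\|Y_M-2X^{Q_1}_M\|_{L^\infty}$.
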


The proof of this theorem is postponed to the end of this section. In order to obtain bounds when $M\rightarrow \infty$ we recall the following, see  \cite[Equation 107]{Hutzenthaler_Jentzen_2020}, also \cite[Remark 5.2]{Brehier_Cox_Millet_2024}: 

\begin{theorem}\label{thm: strong conv Galerkin} Assume that there exist $\gamma_0,\delta_0>0$ and $p>32$ such that $\EE{\exp(\gamma_0\|X_0\|_{L^2}^2)} < \infty$ and $\EE{\|(-A)^{\frac{1}{4}+\delta_0}X_0\|_{L^2}^p} < \infty$. Let $Q\in \mathcal{L}_1(L^2)$ be positive and self-adjoint and let $X^{Q}$, $X^{Q}_M$ ($M\in \N$) be solutions to~\eqref{eq:Burgers} and~\eqref{eq:Galerkin approx}. Then, for all $r\in [1,\frac{p}{4})$, $\alpha \in (0,\min\{1,\frac{1}{2}+2\delta_0\})$ there exists $C_{\alpha,\gamma_0,\delta_0,p,r,T}(X_0)\in (0,\infty)$ and an increasing function $F_{p,T}\colon [0,\infty)\rightarrow [0,\infty)$ such that for all $M\in \N$
\begin{align}
    \sup_{t \in [0,T]}\|X^Q(t) - X^Q_M(t)\|_{L^r(\Omega;L^2)} \leq C_{\alpha,\gamma_0,\delta_0,p,r,T}(X_0) F_{p,T}(\tr(Q)) M^{-\alpha}. \label{eq:strong conv of X_M}
\end{align}
\end{theorem}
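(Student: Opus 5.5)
The statement is essentially \cite[Equation 107]{Hutzenthaler_Jentzen_2020}. What has to be added is that the constant depends on $Q$ only through a non-decreasing function of $\tr(Q)$. So I would re-run the argument behind it, replacing each $Q$-dependent ingredient by its counterpart from Section~\ref{ssec:bounds}, where the dependence on $\tr(Q)$ is explicit.

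\textbf{Splitting the error.} First pass the bounds of Lemmas~\ref{lem: X_M sup +exp moment bounds}--\ref{lem:(-A)X_M bound} to the limit $M\to\infty$ (Fatou, plus lower semicontinuity of the norms) to obtain the same bounds for $X^Q$. Then write
\[
X^Q-X^Q_M=(I-P_M)X^Q+e_M, \qquad e_M:=P_MX^Q-X^Q_M .
\]
The first term is handled by $\|(I-P_M)(-A)^{-\lambda}\|_{\calL(L^2)}\le (\pi M)^{-2\lambda}$ together with the bound of Lemma~\ref{lem:(-A)X_M bound} for $X^Q$, for any $\lambda<\frac14+\delta_0$. This gives $M^{-\alpha}$ with $\alpha=2\lambda<\frac12+2\delta_0$, capped at $1$.

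\textbf{Pathwise equation for $e_M$.} Both processes are driven by $P_MW^Q$, so the noise cancels and $e_M$ solves
\[
\tfrac{d}{dt}e_M = Ae_M + B_M(P_MX^Q)-B_M(X^Q_M) + P_M\big(B(X^Q)-B(P_MX^Q)\big), \qquad e_M(0)=0 .
\]
Use $B_M(a)-B_M(b)=B_M[a-b,a+b]$ and the cancellation~\eqref{eq:B_cancelations}:
\[
\langle e_M,B_M[e_M,a+b]\rangle=-\tfrac12\langle a+b,B(e_M)\rangle,
\]
which is bounded by $\|a+b\|_{L^\infty}\|e_M\|_{L^2}\|\nabla e_M\|_{L^2}$. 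Absorbing the gradient into the dissipation yields
\[
\tfrac{d}{dt}\|e_M\|^2_{L^2}\le C\big(\|P_MX^Q\|_{L^\infty}^2+\|X^Q_M\|_{L^\infty}^2\big)\|e_M\|^2_{L^2}+\|e_M\|^2_{L^2}+\|R_M\|^2_{L^2},
\]
where $R_M=P_M(B(X^Q)-B(P_MX^Q))$. This remainder is bounded via Lemmas~\ref{lem:L_inf bound of X_M} and~\ref{lem:(-A)X_M bound} by $M^{-\alpha}$ times polynomial moments. Gronwall then gives
\[
\|e_M(t)\|^2_{L^2}\le \exp\Big(C\!\int_0^T \big(\|P_MX^Q\|_{L^\infty}^2+\|X^Q_M\|_{L^\infty}^2+1\big)\,ds\Big)\int_0^T\|R_M\|_{L^2}^2\,ds ,
\]
and H\"older's inequality in $\omega$ separates the exponential factor from the remainder. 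The restriction $r<\frac p4$ should come from this H\"older split combined with the moment requirements in Lemmas~\ref{lem:L_inf bound of X_M} and~\ref{lem:(-A)X_M bound}.

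\textbf{Main obstacle: the exponential factor.} Controlling the exponential Gronwall weight is the hard step. The naive interpolation $\|x\|_{L^\infty}^2\le \|x\|_{L^2}\|\nabla x\|_{L^2}$ produces both $\int\|\nabla X\|^2$ and $\sup\|X\|^2$ in the exponent with coefficients that cannot both be made smaller than the admissible $\beta$ in Lemma~\ref{lem: X_M sup +exp moment bounds}(ii). I would instead follow \cite{Hutzenthaler_Jentzen_2020}.
\begin{itemize}
\item Decompose $X^Q_M=Y_M+V_M$, with $Y_M$ the stochastic convolution.
\item Control the Gaussian part $Y_M$ through Lemma~\ref{lem:Y exp bound} and Fernique-type bounds based on Lemma~\ref{lem:Y Linf bound}; all of these depend on $Q$ only through $\tr(Q)$ and $\|Q\|_{\calL(L^2)}\le\tr(Q)$.
\item Handle the more regular part $V_M$ by using the monotone structure $\langle B(x),x\rangle=0$ and the exponential moments of Lemma~\ref{lem: X_M sup +exp moment bounds}(ii), applied with a small coefficient.
\end{itemize}
If this works, Hutzenthaler and Jentzen's abstract perturbation estimate (an exponential weight of the form $\exp(\int U(X_s)\,ds)$ with $U$ a Lyapunov-type function) can be applied directly. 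Every constant it produces is monotone in $\tr(Q)$, which yields $F_{p,T}(\tr(Q))$ in~\eqref{eq:strong conv of X_M}.
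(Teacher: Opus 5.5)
The paper does not reprove this statement. It recalls it from \cite[Equation 107]{Hutzenthaler_Jentzen_2020} (cf.\ \cite[Remark 5.2]{Brehier_Cox_Millet_2024}), with the monotone dependence on $\tr(Q)$ inherited from the explicit forms of Lemmas~\ref{lem: X_M sup +exp moment bounds}--\ref{lem:(-A)X_M bound}. Your opening sentence matches this. However, the self-contained argument you sketch does not close, and it fails exactly at the step you leave open.

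\textbf{The Gronwall weight.} You bound $\langle e_M,B_M[e_M,y]\rangle$, with $y=P_MX^Q+X^Q_M$, by $\|y\|_{L^\infty}\Ltwo{e_M}\Ltwo{\nabla e_M}$ and then absorb $\nabla e_M$ into the dissipation. Because the dissipation has unit size, this forces a Gronwall weight $\exp\bigl(c\int_0^T\|y(s)\|_{L^\infty}^2\,ds\bigr)$ with a fixed absolute constant $c$ that cannot be made small.

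Lemma~\ref{lem: X_M sup +exp moment bounds}(ii) only controls exponentials with coefficients below $\gamma_0/(1+2\gamma_0\|Q\|_{\calL(L^2)})$. The Gaussian part does not help either. Since $\|Y_M\|_{L^\infty}\ge\Ltwo{Y_M}$, the quantity $\E\bigl[\exp\bigl(c\int_0^T\|Y_M(s)\|_{L^\infty}^2ds\bigr)\bigr]$ is finite only for $c$ below a threshold. That threshold tends to $0$ when $Q$ is replaced by $\sigma^2Q$ with $\sigma\to\infty$; Fernique-type bounds only ever give integrability for small $c$. So splitting $X^Q_M=Y_M+V_M$ cannot rescue this weight, and the proposal contains no mechanism that does.

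The missing idea is to put the $L^\infty$ norm on the error and the derivative on the solutions. This is what \cite[Section 3.2.3]{Hutzenthaler_Jentzen_2020} does, and what the paper does in its proof of Theorem~\ref{thm:strong perturbation}. Write $\langle e,B[e,y]\rangle=\tfrac12\langle e^2,\nabla y\rangle\le\tfrac1{32\delta}\|e\|_{L^\infty}^2+2\delta\Ltwo{e}^2\Ltwo{\nabla y}^2$, and then use $\tfrac1{32\delta}\|e\|_{L^\infty}^2\le\kappa(\delta)\Ltwo{e}^2+\tfrac12\Ltwo{\nabla e}^2$. The weight becomes $\exp\bigl(\int_0^T\kappa(\delta)+2\delta\Ltwo{\nabla y(s)}^2\,ds\bigr)$ with $\delta$ free. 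Choosing $\delta$ small in terms of the H\"older exponent, $\gamma_0$ and $\tr(Q)$ makes it integrable by Lemma~\ref{lem: X_M sup +exp moment bounds}(ii), plus Fatou for $X^Q$. The factor $e^{T\kappa(\delta)}$ is where the monotone dependence on $\tr(Q)$ enters.

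\textbf{The remainder.} A second, smaller problem is that the remainder cannot be estimated through $\Ltwo{R_M}$. We have $R_M=P_M\nabla\bigl((I-P_M)X^Q\,(X^Q+P_MX^Q)\bigr)$, so bounding it in $L^2$ costs a full derivative. The natural estimate is $\Ltwo{R_M}\le\pi M\,\Ltwo{(I-P_M)X^Q\,(X^Q+P_MX^Q)}\lesssim M^{1-2\lambda}$. A genuine rate would need $X^Q(t)\in D((-A)^{\lambda})$ with $\lambda>\frac12$, which fails for general trace-class $Q$.

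Since $e_M\in H_M$, write instead $\langle e_M,R_M\rangle=-\langle\nabla e_M,(X^Q)^2-(P_MX^Q)^2\rangle$. Absorb $\tfrac12\Ltwo{\nabla e_M}^2$ into the dissipation and keep
\[
\Ltwo{(I-P_M)X^Q}^2\,\|X^Q+P_MX^Q\|_{L^\infty}^2\le(\pi M)^{-4\lambda}\Ltwo{(-A)^{\lambda}X^Q}^2\,\|X^Q+P_MX^Q\|_{L^\infty}^2 .
\]
Control $\|P_MX^Q\|_{L^\infty}$ through \eqref{eq:Linf bound by L2}, since $P_M$ is not bounded on $L^\infty$. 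This is the analogue of the term $\tfrac12\Ltwo{Y_M(Y_M-2X^{Q_1}_M)}^2$ in the proof of Theorem~\ref{thm:strong perturbation}.

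With these two changes, your pathwise Gronwall--H\"older route becomes a workable hands-on version of the Hutzenthaler--Jentzen perturbation estimate.
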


Combining Theorems~\ref{thm:strong perturbation} and~\ref{thm: strong conv Galerkin} we obtain:
\begin{corollary}\label{cor:strong conv X X^{Q_2} X^{Q_2}_M}
Assume that there exist $\gamma_0,\delta_0>0$ and $p>32$ such that $\EE{\exp(\gamma_0\|X_0\|_{L^2}^2)} < \infty$ and $\EE{\|(-A)^{\frac{1}{4}+\delta_0}X_0\|_{L^2}^p} < \infty$. Let $K_{\max} \in (0,\infty)$ and let $Q_1,Q_2 \in \mathcal{L}_1(L^2)$ be positive and self-adjoint, such that $\max\{\tr(Q_1),\tr(Q_2)\} \leq K_{\max}$ and let $X^{Q_1}$, $X^{Q_2}$, and $X^{Q_2}_M$ ($M\in \N$) be solutions to~\eqref{eq:Burgers} with $Q=Q_1$, to~\eqref{eq:Burgers} with $Q=Q_2$, and to~\eqref{eq:Galerkin approx} with $Q=Q_2$. Then, for all $r\in [1,\frac{p}{4})$, $\alpha \in (0,\min\{1,\frac{1}{2}+2\delta_0\})$ there exists  $C_{\alpha,\gamma_0,\delta_0,p,r,T}(X_0,K_{\max}) \in (0,\infty)$ such that 
    \begin{align}\label{eq:strong X^Q_1 vs X^Q_2}
        \sup_{t\in [0,T]}\|X^{Q_1}(t) - X^{Q_2}(t)\|_{L^r(\Omega; L^2)} \leq C_{\alpha, \gamma_0,\delta_0,p,r,T}(X_0,K_{\max})
        \big\| (-A)^{-\nicefrac{\alpha}{2}} \big|Q_1^{\nicefrac{1}{2}} -Q_2^{\nicefrac{1}{2}}\big| \big\|_{\calL_2(L^2)},
    \end{align}
and, for all $M\in \N$:
    \begin{align*}        
        \sup_{t\in [0,T]}\|X^{Q_1}(t) - X^{Q_2}_M(t)\|_{L^r(\Omega; L^2)}  \leq
        C_{\alpha,\gamma_0,\delta_0,p,r,T}(X_0,K_{\max})\Big( M^{-\alpha}
        +
        \big\| (-A)^{-\nicefrac{\alpha}{2}} \big|Q_1^{\nicefrac{1}{2}} -Q_2^{\nicefrac{1}{2}}\big| \big\|_{\calL_2(L^2)}\Big).
    \end{align*}
\end{corollary}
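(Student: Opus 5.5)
The corollary follows from Theorems~\ref{thm:strong perturbation} and~\ref{thm: strong conv Galerkin} by a triangle inequality and a limit $M\to\infty$. First check the hypotheses. From $p>32$ and $r\in[1,\frac{p}{4})$, choose $\tilde p := \frac{p}{2} > 16 > 2$. The assumption $\EE{\|(-A)^{\frac14+\delta_0}X_0\|_{L^2}^{p}}<\infty$ is exactly $\EE{\|(-A)^{\frac14+\delta_0}X_0\|_{L^2}^{2\tilde p}}<\infty$. Moreover $r<\frac p4<\tilde p$. Hence Theorem~\ref{thm:strong perturbation} applies with $\tilde p$ in place of $p$, for the given $r$ and for every $\alpha\in(0,1)$, which covers the admissible $\alpha\in(0,\min\{1,\frac12+2\delta_0\})$. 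It gives, uniformly in $M\in\N$,
\begin{equation*}
\sup_{t\in[0,T]}\|X^{Q_1}_M(t)-X^{Q_2}_M(t)\|_{L^r(\Omega;L^2)}\le C(X_0,K_{\max})\,\big\|(-A)^{-\nicefrac{\alpha}{2}}\big|Q_1^{\nicefrac12}-Q_2^{\nicefrac12}\big|\big\|_{\calL_2(L^2)}.
\end{equation*}

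Next, for every $t\in[0,T]$ and $M\in\N$, split
\begin{equation*}
\|X^{Q_1}(t)-X^{Q_2}_M(t)\|_{L^r(\Omega;L^2)}\le \|X^{Q_1}(t)-X^{Q_1}_M(t)\|_{L^r(\Omega;L^2)}+\|X^{Q_1}_M(t)-X^{Q_2}_M(t)\|_{L^r(\Omega;L^2)}.
\end{equation*}
Bound the first term by Theorem~\ref{thm: strong conv Galerkin} with $Q=Q_1$. Its hypotheses ($p>32$, $r<\frac p4$, the same $\alpha$ range) are exactly those of the corollary. This gives $C(X_0)F_{p,T}(\tr(Q_1))M^{-\alpha}\le C(X_0)F_{p,T}(K_{\max})M^{-\alpha}$, using that $F_{p,T}$ is increasing. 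The second term is handled by the display above. Together these give the second (Galerkin) estimate of the corollary, with constant depending only on $X_0$, $K_{\max}$ and the parameters.

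For the first estimate~\eqref{eq:strong X^Q_1 vs X^Q_2}, use a three-term split:
\begin{equation*}
\|X^{Q_1}(t)-X^{Q_2}(t)\|_{L^r(\Omega;L^2)}\le \|X^{Q_1}(t)-X^{Q_1}_M(t)\|_{L^r(\Omega;L^2)}+\|X^{Q_1}_M(t)-X^{Q_2}_M(t)\|_{L^r(\Omega;L^2)}+\|X^{Q_2}_M(t)-X^{Q_2}(t)\|_{L^r(\Omega;L^2)}.
\end{equation*}
Apply Theorem~\ref{thm: strong conv Galerkin} to $Q_1$ and to $Q_2$ for the outer terms, each bounded by $C(X_0)F_{p,T}(K_{\max})M^{-\alpha}$. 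The middle term is bounded by the display above. Taking the supremum over $t$ and letting $M\to\infty$ removes the $M^{-\alpha}$ terms.

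The main point to check is that all constants are independent of $Q_1,Q_2$ beyond $K_{\max}$. For Theorem~\ref{thm:strong perturbation} this is built into the statement. For Theorem~\ref{thm: strong conv Galerkin} it rests on $F_{p,T}$ being increasing together with $\tr(Q_i)\le K_{\max}$. The other check is the bookkeeping of the moment exponent $p$ versus $2\tilde p$, which is handled above by the choice $\tilde p=\frac p2$.
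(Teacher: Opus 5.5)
Your proposal is correct and follows the paper's route. The paper obtains the corollary by combining the uniform-in-$M$ bound of Theorem~\ref{thm:strong perturbation} with the Galerkin rate of Theorem~\ref{thm: strong conv Galerkin} through the triangle inequality, using monotonicity of $F_{p,T}$ with $\tr(Q_i)\le K_{\max}$ and letting $M\to\infty$ for the first estimate. Your choice $\tilde p=\frac{p}{2}$ (so that $2\tilde p=p$ and $r<\frac{p}{4}<\tilde p$) is exactly the moment bookkeeping needed to apply Theorem~\ref{thm:strong perturbation}, which the paper leaves implicit.
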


\begin{remark}[On bounding $\left\| (-A)^{-\nicefrac{\alpha}{2}} \left|Q_1^{\nicefrac{1}{2}} -Q_2^{\nicefrac{1}{2}}\right| \right\|_{\calL_2(L^2)}$]\label{rem:bounding Q HS}
See also Remark~\ref{rem:bounding Q}: for $\alpha,\beta \geq 0$ satisfying $0\leq \beta < \alpha -\frac{1}{2}$ we have $\| (-A)^{-\nicefrac{(\alpha-\beta)}{2}} \|_{\calL_2(L^2)}<\infty$ by \eqref{eq: A L_1 reg} and (by~\eqref{eq:ideal})
\begin{equation*}
\big\| (-A)^{-\nicefrac{\alpha}{2}} \big|Q_1^{\nicefrac{1}{2}} -Q_2^{\nicefrac{1}{2}}\big| \big\|_{\calL_2(L^2)} \leq \| (-A)^{-\nicefrac{(\alpha-\beta)}{2}} \|_{\calL_2(L^2)} \big\| (-A)^{-\nicefrac{\beta}{2}} (Q_1^{\nicefrac{1}{2}} -Q_2^{\nicefrac{1}{2}}) \big\|_{\calL(L^2)}.
\end{equation*}
Moreover, once again one can obtain better estimates if $Q_1$, $Q_2$, and $A$ are jointly diagonalisable, indeed, in this case we obtain
\begin{equation*}
\big\| (-A)^{-\nicefrac{\alpha}{2}} \big|Q_1^{\nicefrac{1}{2}} -Q_2^{\nicefrac{1}{2}}\big| \big\|_{\calL_2(L^2)} = \left(\sum_{k\in \N} (\pi k)^{-2\alpha} \big|\lambda_{Q_1}^{\nicefrac{1}{2}}(h_k)- \lambda_{Q_2}^{\nicefrac{1}{2}}(h_k)\big|^2\right)^{\nicefrac{1}{2}},
\end{equation*}
where $\lambda_{Q_i}(h_k)$ is the eigenvalue of $Q_i$ ($i\in \{1,2\})$ corresponding to the eigenvector $h_k$.
\end{remark}

In particular, taking $Q_1 = \sum_{k=1}^{\infty}q_k \langle \cdot , e_k\rangle e_k$ and $Q_2 = \sum_{k=1}^{N} q_k \langle \cdot , e_k\rangle e_k$ in Corollary~\ref{cor:strong conv X X^{Q_2} X^{Q_2}_M} and using the rather crude estimate
\begin{equation*}
\big\| (-A)^{-\frac{1}{2}-\min\{\frac{1}{4},\delta_0\}} \big|Q_1^{\nicefrac{1}{2}}-Q_2^{\nicefrac{1}{2}}\big| \big\|_{\calL_2(L^2)}
\leq C \left\| \sum_{k=N+1}^{\infty}\sqrt{q_k} \langle \cdot , e_k\rangle e_k \right\|_{\calL(L^2)} = C \sqrt{q_{N+1}}
\end{equation*}
we obtain:
\begin{corollary} \label{cor: strong conv X X^{Q_N} X^{Q_N}_M}
    Suppose that there exist $\gamma_0,\delta_0 >0,$ $p>32 $ such that $\EE{\exp(\gamma_0 \|X_0\|_{L^2}^2)} < \infty$ and $\EE{\|(-A)^{\frac{1}{4}+\delta_0}X_0\|_{L^2}^{p}} < \infty$. Let $(q_k)_{k\in \N}\in \ell^1$ be a non-increasing sequence of non-negative real numbers and $(e_k)_{k\in \N}$ an orthonormal basis in $L^2$, and let  
\begin{equation}
Q=\sum_{k=1}^{\infty}q_k \langle \cdot , e_k\rangle e_k, \qquad 
Q_N=\sum_{k=1}^{N} q_k \langle \cdot , e_k\rangle e_k.
\end{equation}
Let $X^{Q}$, $X^{Q_N}$, and $X^{Q_N}_M$ ($N, M\in \N$) be solutions to~\eqref{eq:Burgers}  with $Q=Q$, to~\eqref{eq:Burgers} with $Q=Q_N$, and to~\eqref{eq:Galerkin approx} with $Q=Q_N$. Then, for all $r\in [1,\frac{p}{4})$ there exists $C_{\gamma_0,\delta_0,p,r,T}(X_0,Q) \in (0,\infty)$ such that for all $N\in \N$,
    \begin{align}\label{eq:strong X^Q vs X^Q_N}
        \sup_{t\in [0,T]}\|X(t) - X^{Q}(t)\|_{L^r(\Omega; L^2)} \leq C_{\gamma_0,\delta_0,p,r,T}(X_0,Q)\sqrt{q_{N+1}}
    \end{align}
and for all $r\in [1,\frac{p}{4})$, $\alpha \in (0,\min\{1,\frac{1}{2}+2\delta_0\})$ there exists $C_{\alpha,\gamma_0,\delta_0,p,r,T}(X_0,Q) > 0$ such that for all $M\in \N$
    \begin{align}\label{eq:strong X^Qvvs X_M^Q_N}
        \sup_{t\in [0,T]}\|X^{Q}(t) - X^{Q_N}_M(t)\|_{L^r(\Omega; L^2)} \leq C_{\alpha, \gamma_0,\delta_0,p,r,T}(X_0,Q)
        \left( M^{-\alpha} + \sqrt{q_{N+1}}\right).
    \end{align}
\end{corollary}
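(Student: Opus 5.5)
The plan is to specialise Corollary~\ref{cor:strong conv X X^{Q_2} X^{Q_2}_M} to $Q_1 = Q$ and $Q_2 = Q_N$ with $K_{\max} := \tr(Q)$. Then bound the Hilbert--Schmidt quantity on its right-hand side by $\sqrt{q_{N+1}}$ using Remark~\ref{rem:bounding Q HS} with $\beta = 0$. (In~\eqref{eq:strong X^Q vs X^Q_N} the process $X$ is to be read as $X^{Q_N}$.)

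\emph{Admissibility of $Q$ and $Q_N$.} Both operators are positive and self-adjoint, since $q_k \geq 0$. Moreover $\tr(Q_N) = \sum_{k\le N} q_k \leq \sum_{k} q_k = \tr(Q) < \infty$ because $(q_k)\in\ell^1$. Hence $K_{\max} = \tr(Q)$ is admissible uniformly in $N$, and the constants in Corollary~\ref{cor:strong conv X X^{Q_2} X^{Q_2}_M} depend on $Q$ only through $\tr(Q)$.

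\emph{Noise term.} Since $Q$ and $Q_N$ are diagonal in the same basis $(e_k)$, we have
\[
Q^{\nicefrac12} - Q_N^{\nicefrac12} = \sum_{k>N}\sqrt{q_k}\,\langle\cdot,e_k\rangle e_k .
\]
This operator is positive, so it equals its own absolute value. Because $(q_k)$ is non-increasing, its operator norm is $\sup_{k>N}\sqrt{q_k} = \sqrt{q_{N+1}}$. Fix
\[
\alpha_0 := \tfrac12 + \min\{\tfrac14,\delta_0\}.
\]
Then $\alpha_0 \in (\tfrac12, \min\{1, \tfrac12 + 2\delta_0\})$. By~\eqref{eq: A L_1 reg} we have $\|(-A)^{-\nicefrac{\alpha_0}{2}}\|_{\calL_2(L^2)} < \infty$, because $\alpha_0 > \tfrac12$. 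The ideal property~\eqref{eq:ideal} then gives
\[
\big\|(-A)^{-\nicefrac{\alpha_0}{2}}\big|Q^{\nicefrac12}-Q_N^{\nicefrac12}\big|\big\|_{\calL_2(L^2)} \le C\sqrt{q_{N+1}} .
\]
Inserting this into~\eqref{eq:strong X^Q_1 vs X^Q_2} with $\alpha = \alpha_0$ yields~\eqref{eq:strong X^Q vs X^Q_N}.

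\emph{Second estimate.} For the second estimate, $\alpha$ is an arbitrary element of $(0,\min\{1,\tfrac12+2\delta_0\})$, possibly smaller than $\alpha_0$. So I will not use the second bound of the corollary directly. Instead I use the triangle inequality:
\[
\|X^{Q} - X^{Q_N}_M\| \le \|X^{Q} - X^{Q_N}\| + \|X^{Q_N} - X^{Q_N}_M\| .
\]
The first term is bounded by $C\sqrt{q_{N+1}}$ by the step above. The second term is bounded by Theorem~\ref{thm: strong conv Galerkin} with $Q = Q_N$, giving $C(X_0)\,F_{p,T}(\tr(Q_N))\,M^{-\alpha}$. Since $F_{p,T}$ is increasing, this is at most $C(X_0)\,F_{p,T}(\tr(Q))\,M^{-\alpha}$, which is uniform in $N$.

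The only delicate point is choosing the exponent correctly. The paper's displayed crude estimate uses the power $-\tfrac12 - \min\{\tfrac14,\delta_0\}$, which corresponds to $\alpha = 1 + 2\min\{\tfrac14,\delta_0\}$ in Corollary~\ref{cor:strong conv X X^{Q_2} X^{Q_2}_M}, and that is outside the admissible range. The choice $\alpha_0 = \tfrac12 + \min\{\tfrac14,\delta_0\}$ lies strictly inside the admissible range and still exceeds $\tfrac12$, which is all that is needed for $(-A)^{-\nicefrac{\alpha_0}{2}}$ to be Hilbert--Schmidt.
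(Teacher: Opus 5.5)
Your proposal is correct and follows the paper's own route. You specialise Corollary~\ref{cor:strong conv X X^{Q_2} X^{Q_2}_M} to $Q_1=Q$, $Q_2=Q_N$ with $K_{\max}=\tr(Q)$, and bound the Hilbert--Schmidt term by $\|(-A)^{-\nicefrac{\alpha_0}{2}}\|_{\calL_2(L^2)}\sqrt{q_{N+1}}$ via the ideal property; your choice $\alpha_0=\tfrac12+\min\{\tfrac14,\delta_0\}$ correctly repairs the exponent in the paper's displayed crude estimate, which read literally corresponds to an inadmissible $\alpha>1$. Handling the second bound through the triangle inequality and Theorem~\ref{thm: strong conv Galerkin} just unpacks how the second bound of Corollary~\ref{cor:strong conv X X^{Q_2} X^{Q_2}_M} is obtained, and it legitimately covers $\alpha\le\tfrac12$ (alternatively, apply that bound with $\max\{\alpha,\alpha_0\}$ and use $M^{-\max\{\alpha,\alpha_0\}}\le M^{-\alpha}$).
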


\begin{remark}\label{rem:sharp strong bound}
We suspect that the bounds obtained in Theorem~\ref{thm: strong conv Galerkin} are essentially sharp: indeed, assume $Q_1$, $Q_2$ and $A$ are jointly diagonalisable and consider the Ornstein-Uhlenbeck processes $Y^{Q_i}$, $i\in \{1,2\}$ given by 
\begin{align*}
Y^{Q_i}(t) & = \int_0^{t} e^{(t-s)A} \,dW^{Q_i}(s) = \sum_{k=1}^{\infty} \lambda^{\nicefrac{1}{2}}_{Q_i}(h_k) h_k\int_0^{t} e^{-(t-s)k^2\pi^2}\,dW^{(k)}(s), \quad i\in \{1,2\}.
\end{align*}
It follows from It\^o's isometry that
\begin{equation*}
\E \big[\big\| Y^{Q_1}(1) - Y^{Q_2}(1) \big\|_{L^2}^2 \big] \simeq \big\| (-A)^{-\nicefrac{1}{2}}\big|Q_1^{\nicefrac{1}{2}}-Q_2^{\nicefrac{1}{2}}\big| \big\|^2_{\mathcal{L}_2(L^2)}.
\end{equation*}
\end{remark}
\begin{remark}
The conditions under which~\eqref{eq:strong X^Q_1 vs X^Q_2} and ~\eqref{eq:strong X^Q vs X^Q_N} (respectively,~\eqref{eq:weak X^Q_1 vs X^Q_2} and~\eqref{eq:weak X^Q vs X^Q_N}) hold can presumably be weakened to the assumptions in Theorem \ref{thm:strong perturbation} (respectively, Theorem~\ref{thm:weak perturbation}) by using Fatou's lemma and e.g.\ \cite[Corollary 4.5]{CoxEtAl:2021} instead of Theorem~\ref{thm: strong conv Galerkin} (respectively, Theorem~\ref{thm: weak rates Galerkin approx}).
\end{remark}

The proof of Theorem~\ref{thm:strong perturbation} is an adaptation of the proof for the strong convergence of $X^Q_M$ to $X^Q$, as in \cite[Section 3.2.3 and Proposition 3.7]{Hutzenthaler_Jentzen_2020}. In particular, the perturbation estimate in \cite[Corollary 2.11]{Hutzenthaler_Jentzen_2020} plays a key role. We state it here with $H = U= L^2, \epsilon = 0, F_1 = F_2 =: F$ and $B_i(x)=R_i^{\nicefrac{1}{2}}$ constant in $x$, where $R_i\in \mathcal{L}_1(L^2)$ are positive, self-adjoint for $i=1,2$:
\begin{theorem}[{\hspace{1sp}\cite[Corollary 2.11]{Hutzenthaler_Jentzen_2020}}] \label{thm:Cor 2.11}
    Let $A:D(A) \subseteq L^2 \to L^2$ be a densely defined linear operator, let $\mathcal{O}\subseteq D(A)$ and let $F \colon \mathcal{O} \rightarrow L^2$ be measurable. Furthermore, let $\chi \colon [0,T]\times \Omega \to \R$, and let $ \Hat{X}\colon [0,T]\times \Omega \to L^2$, $X_1,X_2 \colon [0,T]\times \Omega \to \mathcal{O}$ be predictable stochastic processes, which satisfy $\int_0^T \|AX_i(s)\|_{L^2} +\|F(X_i(s))\|_{L^2}\,ds< \infty $ a.s.\ for $i=1,2$ and $\int_0^T  \|A\Hat{X}(s)\|_{L^2}  + \|F(\Hat{X}(s))\|_{L^2} \,ds< \infty $ a.s., and moreover satisfy the stochastic evolution equations $X_i(t) = X_i(0) + \int_0^t AX_i(s) + F(X_i(s))\, ds +  W^{R_i}(t)$ for $i =1,2$ and $\Hat{X}(t) = X_2(0) + \int_0^t A\Hat{X}(t) + F(X_1(s)) \, ds + W^{R_2}(t)$. Assume that
\begin{align*}
\int_0^T \left[\frac{\langle X_2(t) - \Hat{X}(t), A(X_2(t) -\Hat{X}(t)) + F(X_2(t)) -F(\Hat{X}(t))\rangle_{L^2}}{\|X_2(t) -\Hat{X}(t)\|_{L^2} ^2} + \chi(t)\right]^+dt < \infty.
\end{align*}

Then, for all $t \in [0,T]$, $p \geq 2$ and $ q, r >0$ such that $\frac{1}{p} + \frac{1}{q} = \frac{1}{r}$, it holds that

\begin{align*}
    &\|X_1(t) - X_2(t)\|_{L^r(\Omega; L^2)} \leq \|X_1(t) - \Hat{X}(t)\|_{L^r(\Omega; L^2)} \\
    &\hspace{1em}+\Big\|p\|X_2 - \Hat{X}\|_{L^2} ^{p-2}\left[\langle X_2 - \Hat{X}, F(\Hat{X})- F(X_1)\rangle_{L^2}  - \chi \|X_2 - \Hat{X}\|_{L^2}^2 \right]^+ \Big\|^{\frac{1}{p}}_{L^1([0,t]\times \Omega; \R)} \\
    &\hspace{1em}\times \Bigg\|\exp\left(\int_0^t \left[\frac{\langle X_2(s) - \Hat{X}(s), A(X_2(s) -\Hat{X}(s)) + F(X_2(s)) -F(\Hat{X}(s))\rangle_{L^2}}{\|X_2(s) -\Hat{X}(s)\|_{L^2} ^2} + \chi(s)\right]^+\,ds\right) \Bigg\|_{L^q(\Omega;\R)}.
\end{align*}

\end{theorem}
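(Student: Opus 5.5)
The plan is to split off the auxiliary process $\Hat{X}$ and control the remainder pathwise. By the triangle inequality in $L^r(\Omega;L^2)$,
\[
\|X_1(t)-X_2(t)\|_{L^r(\Omega;L^2)} \leq \|X_1(t)-\Hat{X}(t)\|_{L^r(\Omega;L^2)} + \|X_2(t)-\Hat{X}(t)\|_{L^r(\Omega;L^2)}.
\]
It therefore suffices to bound the second term by the product of the two remaining factors in the statement.

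The key observation is that $X_2$ and $\Hat{X}$ are driven by the same noise $W^{R_2}$ and start from the same initial value $X_2(0)$. Hence $E:=X_2-\Hat{X}$ satisfies, for almost every $\omega$, the random integral equation
\[
E(t)=\int_0^t \big[AE(s)+F(X_2(s))-F(X_1(s))\big]\,ds,
\]
with no stochastic integral. By the integrability assumptions, $E$ is an absolutely continuous $L^2$-valued path with $E(0)=0$. I would write the drift as
\[
\big[AE+F(X_2)-F(\Hat{X})\big]+\big[F(\Hat{X})-F(X_1)\big]
\]
and set
\[
a(s):=\Big[\tfrac{\langle E(s),AE(s)+F(X_2(s))-F(\Hat{X}(s))\rangle_{L^2}}{\|E(s)\|_{L^2}^2}+\chi(s)\Big]^+,
\]
using the convention that the quotient is $0$ where $E(s)=0$. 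Then $a\ge 0$, and $\int_0^T a\,ds<\infty$ a.s.\ by hypothesis.

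Since $x\mapsto\|x\|_{L^2}^p$ is $C^1$ for $p\geq 2$, the chain rule for absolutely continuous functions gives, for a.e.\ $s$,
\[
\tfrac{d}{ds}\|E\|^p_{L^2}=p\|E\|^{p-2}_{L^2}\langle E,E'\rangle_{L^2}.
\]
Adding and subtracting $\chi\|E\|_{L^2}^2$ then yields
\[
\tfrac{d}{ds}\|E\|_{L^2}^p\le p\,a\,\|E\|_{L^2}^p+g,\qquad g:=p\|E\|_{L^2}^{p-2}\big[\langle E,F(\Hat{X})-F(X_1)\rangle_{L^2}-\chi\|E\|_{L^2}^2\big]^+ .
\]
Applying Gronwall with integrating factor $\exp(-p\int_0^s a)$, and using $a\ge0$ and $E(0)=0$, gives the pathwise bound
\[
\|E(t)\|_{L^2}^p\le \exp\Big(p\int_0^t a(s)\,ds\Big)\int_0^t g(s)\,ds,
\]
hence
\[
\|E(t)\|_{L^2}\le \exp\Big(\int_0^t a(s)\,ds\Big)\Big(\int_0^t g(s)\,ds\Big)^{1/p}.
\]

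To conclude, I apply Hölder's inequality on $\Omega$ with $\frac1r=\frac1p+\frac1q$:
\[
\|E(t)\|_{L^r(\Omega;L^2)}\le \Big\|\Big(\int_0^t g\,ds\Big)^{1/p}\Big\|_{L^p(\Omega)}\,\Big\|\exp\Big(\int_0^t a\,ds\Big)\Big\|_{L^q(\Omega)}.
\]
The first factor equals $\|g\|_{L^1([0,t]\times\Omega)}^{1/p}$, which is exactly the claimed product.

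The main technical obstacle is justifying the differentiation of $\|E\|^p_{L^2}$ pathwise when $A$ is unbounded. I would handle this by noting that $E(t)-E(0)$ is a Bochner integral of an $L^1([0,T];L^2)$ integrand (here we use $X_i,\Hat{X}\in\mathcal{O}\subseteq D(A)$). Consequently $E$ is a.e.\ differentiable in $L^2$ with derivative equal to the integrand, and $\|E\|^p$ is absolutely continuous as a composition of a $C^1$ map with an absolutely continuous path. A secondary point is the set $\{E=0\}$: there $g=0$, so the differential inequality holds trivially, and the convention that the quotient vanishes is consistent with it.
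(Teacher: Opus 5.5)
The paper does not prove this statement; it quotes it from \cite{Hutzenthaler_Jentzen_2020} (Corollary 2.11), specialized to additive noise, $F_1=F_2$ and $\epsilon=0$. There it follows from a general perturbation estimate for It\^o processes, whose proof rests on It\^o's formula for an exponentially weighted $p$-th power of the difference. Your route is genuinely different and more elementary. Because $\hat X$ has the same initial value and the same noise $W^{R_2}$ as $X_2$, the difference $E=X_2-\hat X$ solves a random integral equation with no martingale part. The chain rule for $\|\cdot\|_{L^2}^p$ along an absolutely continuous path, plus Gr\"onwall with an integrating factor, therefore suffices. This even gives the almost sure bound $\|E(t)\|_{L^2}\le \exp(\int_0^t a\,ds)\,(\int_0^t g\,ds)^{1/p}$ before any expectation is taken, with no stopping times or It\^o correction terms. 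Your handling of the chain rule, of the set $\{E=0\}$, of the integrating factor and of the final H\"older step is correct. What the It\^o-based route buys is state-dependent diffusion coefficients: there $X_2-\hat X$ carries a stochastic integral and correction terms become unavoidable. For the additive-noise version stated here, your argument is enough.

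One caveat concerns your first step, the triangle inequality in $L^r(\Omega;L^2)$. It needs $r\geq 1$, whereas the statement allows all $q,r>0$. This cannot be repaired, because as transcribed the inequality is false for every $r\in(0,1)$.
\begin{itemize}
\item Take $A=0$ with $D(A)=L^2$, $R_1=R_2=0$, a unit vector $e\in L^2$, $F(x)=f(\langle x,e\rangle_{L^2})\,e$ with $f(y)=\sqrt{|y(1-y)|}$, and $T\geq\pi$.
\item Take $S\in\calF_0$ with $\Prob(S)=s$. On $S$ let $X_1\equiv e$ and $X_2\equiv 0$. On $S^c$ let $X_1\equiv 0$ and $X_2(t)=\sin^2(\min\{t,\pi\}/2)\,e$; this is a second solution from $0$, which is allowed since $F$ is not Lipschitz.
\item Then $\hat X\equiv 0$. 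Choose $\chi=-\langle E,F(X_2)\rangle_{L^2}/\|E\|_{L^2}^2$ on $\{E\neq 0\}$ and $\chi=0$ elsewhere. This makes the exponential factor equal to $1$ and gives $\int_0^t g\,ds=\|E(t)\|_{L^2}^p$.
\item At $t=\pi$ the left-hand side equals $1$. The right-hand side is $s^{1/r}+(1-s)^{1/p}\le 1+s^{1/r}-s/p$, which is $<1$ as soon as $s^{1/r-1}<1/p$.
\end{itemize}
So the statement should be read with $r\geq 1$. That covers the only use made of it in the paper ($r\in[1,p)$), and for that range your proof is complete.
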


We are now ready to prove Theorem~\ref{thm:strong perturbation}:
\begin{proof}[Proof of Theorem~\ref{thm:strong perturbation}]
Let $M \in \N$. Because of \eqref{eq:A_fracpownorminc}, without loss of generality we may assume $\delta_0 <\frac{1}{4}$.  We will use Theorem~\ref{thm:Cor 2.11} with $p,r$ as given; $q =  \left(\frac{1}{r} - \frac{1}{p}\right)^{-1}$, $\mathcal{O} = H_M$, $X_1 = X^{Q_1}_M, X_2 = X^{Q_2}_M$, so that $A$ is the Dirichlet Laplace operator, $X_1(0) = X_2(0) = P_M X_0, F = B_M, R_i = P_M Q_i P_M$, $i\in \{1,2\}$. Set $\Hat{X} = X^{Q_1}_M - Y_M$, where $Y_M$ satisfies the evolution equation
\begin{align}
    Y_M(t) = \int_0^t A Y_M(s)\, ds + P_M(W^{Q_1}(t) - W^{Q_2}(t)),\quad \forall t\in [0,T]. \label{eq:Y_M evo eq}
\end{align}
Note that $W^{Q_1}(t) - W^{Q_2}(t)$ is a $|Q_1^{\nicefrac{1}{2}}-Q_2^{\nicefrac{1}{2}}|^2$-Brownian motion. Therefore, the stochastic convolution $Y_M$ given by
\begin{align}
    Y_M(t) = \int_0^t e^{(t-s)A}P_M\,dW^{|Q_1^{\nicefrac{1}{2}}-Q_2^{\nicefrac{1}{2}}|^2}(s),\quad \forall t\in [0,T], \label{eq:Y_M rep 1}
\end{align}
satisfies \eqref{eq:Y_M evo eq}. Moreover, $\Hat{X}$ satisfies
\begin{align*}
\Hat{X}(t) &= X^{Q_1}_M(t) - Y_M(t) \\
&=P_MX_0 + \int_0^t A X^{Q_1}_M(s) + B_M(X^{Q_1}_M(s)) \, ds +  P_M W^{Q_1}(t) - \int_0^t A Y_M(s) \,ds  - P_M(W^{Q_1}(t) - W^{Q_2}(t)) \\
&=P_MX_0 + \int_0^t A(X^{Q_1}_M(s)-Y_M(s)) + B_M(X^{Q_1}_M(s)) \, ds + P_M W^{Q_2}(t)\\
&=X_2(0) + \int_0^t A \hat{X}(t) + F(X_1(s)) \,ds + W^{R_2}(t).
\end{align*}

To ease the notation, denote $Z_M = X^{Q_2}_M - X^{Q_1}_M +Y_M \in H_M$. Given a $\chi$ that is specified later, Theorem~\ref{thm:Cor 2.11} asserts that if
\begin{align}\int_0^T \left[\frac{\langle Z_M(t), AZ_M(t) + B_M(X^{Q_2}_M(t)) -B_M(X^{Q_1}_M(t) - Y_M(t))\rangle_{L^2}}{\|Z_M(t)\|_{L^2}^2} + \chi(t)\right]^+ dt < \infty, \label{eq:condition}
\end{align}
then
\begin{align}
    \|X^{Q_1}_M(t) - X^{Q_2}_M(t)\|_{L^r(\Omega; L^2)} \leq (A) +  (B)\times (C),
\end{align}
where 
\begin{align*}
    &(A) := \|Y_M(t)\|_{L^r(\Omega; L^2)}, \\
    &(B) := \Big\|p\|Z_M\|_{L^2}^{p-2}\left[\langle Z_M, B_M(X^{Q_1}_M - Y_M)- B_M(X^{Q_1}_M)\rangle_{L^2} - \chi \|Z_M\|_{L^2}^2 \right]^+ \Big \|^{\frac{1}{p}}_{L^1([0,t]\times \Omega; \R)} ,\\
    &(C) := \Bigg\|\exp\Big(\int_0^t \bigg[\frac{\langle Z_M(s), AZ_M(s) + B_M(X^{Q_2}_M(s)) -B_M(X^{Q_1}_M(s) - Y_M(s))\rangle_{L^2}}{\|Z_M(s)\|_{L^2}^2} + \chi(s)\bigg]^+ds\Big) \Bigg\|_{L^q(\Omega;\R)}.
\end{align*}

We will provide bounds for each of $(A)-(C)$. Note that by proving bounds for $(C)$, we obtain in particular that \eqref{eq:condition} holds.\\
\textit{Estimate for $(A)$}: since $Y_M$ is a stochastic convolution given by \eqref{eq:Y_M rep 1}, using the Proposition~\ref{prop:stoch conv moment bound} with $Q=|Q_1^{\nicefrac{1}{2}}-Q_2^{\nicefrac{1}{2}}|^2$, for all $\alpha\in [0,1)$ and $r\in [1,\infty)$ there exists $C_{\alpha,r,T}>0$ such that
\begin{align}
    \|Y_M(t)\|_{L^r(\Omega; L^2)} \leq C_{\alpha,r,T}\big\| (-A)^{-\nicefrac{\alpha}{2}} \big|Q_1^{\nicefrac{1}{2}}-Q_2^{\nicefrac{1}{2}}\big| \big\|_{\calL_2(L^2)}. \label{eq:(A) bound}
\end{align}
\textit{Estimate for $(B)$}: Let 
\begin{align}
    \chi(s) = \Tilde{\chi}(s) +\frac{1/2-1/p}{T} -\frac{\langle Z_M(s), AZ_M(s) + B_M(X^{Q_2}_M(s)) -B_M(X^{Q_1}_M(s) - Y_M(s))\rangle_{L^2}}{\|Z_M(s)\|_{L^2}^2}, \label{eq:chi}
\end{align}
where $\Tilde{\chi}$ is specified later. With this choice, $(B)$ takes form
\begin{align}
    (B) &= \bigg\|p\|Z_M\|_{L^2}^{p-2}\bigg[\langle Z_M, B_M(X^{Q_1}_M - Y_M)- B_M(X^{Q_1}_M)\rangle_{L^2} + \langle Z_M, AZ_M + B_M(X^{Q_2}_M) -B_M(X^{Q_1}_M - Y_M)\rangle_{L^2} \nonumber\\
    &\quad-\left(\Tilde{\chi}(s) +\frac{1/2-1/p}{T}\right)\|Z_M\|_{L^2}^2\bigg]^+\bigg\|^{\frac{1}{p}}_{L^1([0,t]\times \Omega; \R)}. \label{eq:(B) form}
\end{align}
Recalling the definitions of $B_M(\cdot)$, $Z_M$ and the identity~\eqref{eq:B_cancelations} we obtain:
\begin{align*}
    &\langle Z_M, B_M(X^{Q_1}_M - Y_M)- B_M(X^{Q_1}_M)\rangle_{L^2} + \langle Z_M, AZ_M + B_M(X^{Q_2}_M) -B_M(X^{Q_1}_M - Y_M)\rangle_{L^2} 
    \\&\qquad = -\langle \nabla Z_M, (X^{Q_1}_M - Y_M)^2- (X^{Q_1}_M)^2\rangle_{L^2} -\|\nabla Z_M\|_{L^2}^2 + \tfrac{1}{2}\langle Z_M^2, \nabla(X^{Q_1}_M - Y_M + X^{Q_2}_M)\rangle_{L^2}\\
    &\qquad \leq \tfrac{1}{2}\|\nabla Z_M\|_{L^2}^2 + \tfrac{1}{2}\|Y_M(Y_M-2X^{Q_1}_M)\|_{L^2}^2 -\|\nabla Z_M\|_{L^2}^2+ \tfrac{1}{2}\|Z_M^2\|_{L^2}\|\nabla(X^{Q_1}_M - Y_M + X^{Q_2}_M)\|_{L^2}\\
    &\qquad  \leq \tfrac{1}{2}\bigLtwo{Y_M(Y_M-2X^{Q_1}_M)}^2 -\tfrac{1}{2}\|\nabla Z_M\|_{L^2}^2+ \tfrac{1}{2}\|Z_M\|_{L^2}\|Z_M\|_{L^\infty} \bigLtwo{\nabla(X^{Q_1}_M - Y _M+ X^{Q_2}_M)}.
\end{align*}
Furthermore, Young's inequality in the form $ab \leq \frac{a^2}{32\delta} + 8\delta b^2$ applied to the last term shows, for all $\delta > 0$,
\begin{equation}\label{eq:est 2}
\begin{aligned}
    &\|Z_M, B_M(X^{Q_1}_M - Y_M)- B_M(X^{Q_1}_M)\rangle_{L^2} + \langle Z_M, AZ_M + B_M(X^{Q_2}_M) -B_M(X^{Q_1}_M - Y_M)\rangle_{L^2}  \\
    &\qquad \leq \tfrac{1}{2}\bigLtwo{Y_M(Y_M-2X^{Q_1}_M)}^2 -\tfrac{1}{2}\|\nabla Z_M\|_{L^2}^2+ \tfrac{1}{32\delta}\|Z_M\|^2_{L^\infty}
    + 2\delta \|Z_M\|_{L^2}^2\bigLtwo{\nabla(X^{Q_1}_M - Y_M + X^{Q_2}_M)}^2 \\
    &\qquad \leq \tfrac{1}{2}\bigLtwo{Y_M(Y_M-2X^{Q_1}_M)}^2 + \Big(\kappa(\delta)+ 2\delta\bigLtwo{\nabla(X^{Q_1}_M - Y_M + X^{Q_2}_M)}^2\Big)\|Z_M\|_{L^2}^2, 
\end{aligned}
\end{equation}
where in the last line we use the existence of a strictly decreasing function $\kappa \colon (0,\infty) \to (0,\infty)$ with the property that $\frac{1}{32\delta}\|x\|^2_{L^\infty} \leq \kappa(\delta)\Ltwo{x}^2 + \frac{1}{2}\Ltwo{\nabla x}^2$ (see \cite[Section 3.2.3, p. 34]{Hutzenthaler_Jentzen_2020}). Now, with $\delta$ to be specified later, choose 
\begin{align}
    \Tilde{\chi} = \kappa(\delta)+ 2\delta\bigLtwo{\nabla(X^{Q_1}_M - Y_M + X^{Q_2}_M)}^2. \label{eq:chi tilde}
\end{align}
 Combining \eqref{eq:(B) form}, \eqref{eq:est 2} and the choice of $\Tilde{\chi}$ gives
\begin{align}
    (B)\leq \Big\|p\|Z_M\|_{L^2}^{p-2}\left[\tfrac{1}{2}\bigLtwo{Y_M(Y_M-2X^{Q_1}_M)}^2 - \tfrac{1/2-1/p}{T}\|Z_M\|_{L^2}^2\right]^+ \Big\|^{\frac{1}{p}}_{L^1([0,t]\times \Omega; \R)}.\label{eq:(B) bound 2}
\end{align}
Moreover, by Young's inequality with conjugate exponents $\frac{p}{2}$ and $\frac{p-2}{p}$, it holds that 
\begin{equation}
\begin{aligned}
    &\Big\|p\|Z_M\|_{L^2}^{p-2}\left[\tfrac{1}{2}\bigLtwo{Y_M(Y_M-2X^{Q_1}_M)}^2 - \tfrac{1/2-1/p}{T}\|Z_M\|_{L^2}^2\right]^+\Big\|^{\tfrac{1}{p}}_{L^1([0,t]\times \Omega; \R)} \label{eq:est 2 1}\\
    &\qquad = \bigg(p\int_0^t \EE{\left[\tfrac{1}{2}\bigLtwo{Y_M(s)(Y_M(s)-2X^{Q_1}_M(s))}^2\|Z_M(s)\|_{L^2}^{p-2} - \tfrac{1/2-1/p}{T}\|Z_M(s)\|_{L^2}^p\right]^+}\, ds\bigg)^{\frac{1}{p}} \\
    &\qquad \leq \bigg(p\int_0^t \EE{\left[\tfrac{1}{p} T^{\frac{p-2}{2}} \bigLtwo{Y_M(s)(Y_M(s)-2X^{Q_1}_M(s))}^p+\tfrac{p-2}{2pT}\|Z_M(s)\|_{L^2}^{p} - \tfrac{p-2}{2pT}\|Z_M(s)\|_{L^2}^p\right]^+}\, ds\bigg)^{\frac{1}{p}} \\
    &\qquad 
    \leq T^{\frac{p-2}{2p}}\bigg(\int_0^T \EE{\bigLtwo{Y_M(s)(Y_M(s)-2X^{Q_1}_M(s))}^p}\, ds\bigg)^{\frac{1}{p}}.
\end{aligned}
\end{equation}
Furthermore, one has
\begin{align}
    \EE{\bigLtwo{Y_M(s)(Y_M(s)-2X^{Q_1}_M(s))}^p} &\leq \EE{\big\| Y_M(s)-2X^{Q_1}_M(s)\big\|^p_{L^\infty}\bigLtwo{Y_M(s)}^p} \nonumber \\
    &\leq \EE{\big\|Y_M(s)-2X^{Q_1}_M(s)\big\|^{2p}_{L^\infty}}^\frac{1}{2}\EE{\bigLtwo{Y_M(s)}^{2p}}^\frac{1}{2}\label{eq:est 2 2} \\
    &\leq 2^{p}\left(\EE{\|Y_M(s)\|^{2p}_{L^\infty}}+\EE{\big\| 2X^{Q_1}_M(s)\big\|^{2p}_{L^\infty}}\right)^\frac{1}{2}\EE{\bigLtwo{Y_M(s)}^{2p}}^\frac{1}{2}. \nonumber
\end{align}
Now, note that  $\EE{\|X_0\|_{L^2}^{6p}} < \infty$ since $\EE{\exp(\gamma_0 \|X_0\|_{L^2}^2)} < \infty$. Hence, by Lemma~\ref{lem:L_inf bound of X_M} there exists a non-decreasing  $F_{p,T}\colon [0,\infty) \to  (0,\infty)$ and $C_{\delta_0,p,T}(X_0)\in (0,\infty)$ such that, for all $s \in [0,T]$, 
\begin{align*}
    \EE{\|X^{Q_1}_M(s)\|^{2p}_{L^\infty}} &\leq C_{\delta_0,p,T}(X_0)F_{p,T}(\tr(Q_1)) \leq C_{\delta_0,p,T}(X_0)F_{p,T}(K_{\max}).
\end{align*}
Moreover, recalling that $Y_M$ may be written as in \eqref{eq:Y_M rep 1}, by Lemma~\ref{lem:Y Linf bound} there exists $C_{p,T}>0$ such that for all $s\in [0,T]$ it holds that 
\begin{align*}
    \EE{\|Y_M(s)\|^{2p}_{L^\infty}} &\leq C_{p,T}\tr (|Q_1^{\nicefrac{1}{2}}-Q_2^{\nicefrac{1}{2}}|^2)^p  \leq C_{p,T}(2\tr(Q_1)+2\tr(Q_2))^p
    \leq C_{p,T}(4K_{\max})^p.
\end{align*} 
Combining equations \eqref{eq:est 2 1} and \eqref{eq:est 2 2}, the estimates above, and Proposition~\ref{prop:stoch conv moment bound}, we obtain that there exists $C_{\alpha,\delta_0,p,T}(X_0,K_{\max}) \in (0,\infty)$ such that
\begin{equation}
\label{eq:(B) bound 3}
\begin{aligned}
    &\left\|
        p\|Z_M\|_{L^2}^{p-2}
        \left[
            \tfrac{1}{2}
                \bigLtwo{Y_M(Y_M-2X^{Q_1}_M)}^2 - \frac{1/2-1/p}{T}
                \|
                    Z_M
                \|_{L^2}^2
        \right]^+\right\|^{\frac{1}{p}}_{L^1([0,t]\times \Omega; \R)}
        \\ &\qquad \leq C_{\delta_0,p,T}(X_0,K_{\max})\left[\int_0^T \EE{\bigLtwo{Y_M(s)}^{2p}}^\frac{1}{2}ds\right]^{\frac{1}{p}} \\
    &\qquad \leq C_{\alpha, \delta_0,p,T}(X_0,K_{\max}) 
    \big\| (-A)^{-\nicefrac{\alpha}{2}} \big| Q_1^{\nicefrac{1}{2}} - Q_2^{\nicefrac{1}{2}} \big| \big\|_{\calL_2(L^2)}.
\end{aligned}
\end{equation}
Hence, combining \eqref{eq:(B) bound 2} and \eqref{eq:(B) bound 3} shows
\begin{align}
     (B)\leq C_{\alpha, \delta_0,p,T}(X_0,K_{\max})\big\| (-A)^{-\nicefrac{\alpha}{2}} \big| Q_1^{\nicefrac{1}{2}} - Q_2^{\nicefrac{1}{2}} \big| \big\|_{\calL_2(L^2)}. \label{eq:(B) term estimate}
\end{align}
\textit{Estimate for $(C)$}: For the choice of $\chi$ and $\Tilde{\chi}$ in \eqref{eq:chi} and \eqref{eq:chi tilde}, one has 
\begin{align*}
    &\left[\frac{\langle Z_M(s), AZ_M(s) + B_M(X^{Q_2}_M(s)) -B_M(X^{Q_1}_M(s) - Y_M(s))\rangle_{L^2}}{\|Z_M(s)\|_{L^2}^2} + \chi(s)\right]^+ \\
    &\qquad = \frac{1/2-1/p}{T} + \kappa(\delta)+ 2\delta\bigLtwo{\nabla(X^{Q_1}_M(s) - Y_M(s) + X^{Q_2}_M(s))}^2 \\
    &\qquad \leq \frac{1/2-1/p}{T} + \kappa(\delta)+ 6\delta\bigLtwo{\nabla X^{Q_1}_M(s)}^2 + 6\delta \bigLtwo{\nabla Y_M(s)}^2 + 6\delta \bigLtwo{\nabla X^{Q_2}_M(s)}^2.
\end{align*}
Hence, by Hölder's inequality
\begin{align}
    (C)&=\Bigg\|\exp\Big(\int_0^t \Big[\frac{\langle Z_M(s), AZ_M(s) + B_M(X^{Q_2}_M(s)) -B_M(X^{Q_1}_M(s) - Y_M(s))\rangle_{L^2}}{\|Z_M(s)\|_{L^2}^2} + \chi(s)\Big]^+ds\Big)\Bigg\|_{L^q(\Omega;\R)} \nonumber\\
    &\leq e^{\frac{(1/2-1/p)t}{T} + t\kappa(\delta)} \bigg\|\exp\left(6 \delta \int_0^t \bigLtwo{\nabla X^{Q_1}_M(s)}^2 + \bigLtwo{\nabla X^{Q_2}_M(s)}^2 +\Ltwo{\nabla Y_M(s)}^2 ds\right)\bigg\|_{L^q(\Omega;\R)} \label{eq:(C) bound 1}\\
    &\leq e^{1/2-1/p + T\kappa(\delta)}\EE{\exp\left(\int_0^T 18q\delta\bigLtwo{\nabla X^{Q_1}_M(s)}^2 ds\right)}^{\frac{1}{3q}} \nonumber\\
    &\hspace{1em}\times \EE{\exp\left(\int_0^T 18q\delta\bigLtwo{\nabla X^{Q_2}_M(s)}^2 ds\right)}^{\frac{1}{3q}}\EE{\exp\left(\int_0^T 18q\delta\Ltwo{\nabla Y_M(s)}^2 ds\right)}^{\frac{1}{3q}}, \nonumber
\end{align}
where in the last inequality we also take $t=T$ as the expression was increasing in $t$.\\
Now, take $\delta = \frac{\gamma_0}{20q(1+8\gamma_0 K_{\max})}$. Then in particular $18q\delta \in (0,\frac{\gamma_0}{1+2\gamma_0 \|Q_1\|_{\mathcal{L}(L^2)}})$, so by \eqref{eq:X_M exp bound random initial} in Lemma~\ref{lem: X_M sup +exp moment bounds} and the fact that $\tr(Q_1)\leq K_{\max}$, one has 
\begin{equation}
\label{eq:(C) bound 2}
\begin{aligned}
    \EE{\exp\left(\int_0^T 18q\delta\bigLtwo{\nabla X^{Q_1}_M(s)}^2 ds\right)}  &\leq 2e^{18q\delta T  K_{\max}}\EE{\exp(\gamma_0\Ltwo{X_0}^2)}^{\frac{18q\delta}{\gamma_0}}.
\end{aligned}
\end{equation}
Similarly, 
\begin{equation}
\label{eq:(C) bound 3}
\begin{aligned}
    \EE{\exp\left(\int_0^T 18q\delta\bigLtwo{\nabla X^{Q_2}_M(s)}^2 ds\right)} &\leq 2e^{18q\delta T  K_{\max}}\EE{\exp(\gamma_0\Ltwo{X_0}^2)}^{\frac{18q\delta}{\gamma_0}}.
\end{aligned} 
\end{equation}
 In addition, since $18q\delta < \frac{\gamma_0}{1+8\gamma_0 K_{\max}} \leq \frac{\gamma_0}{1+2\gamma_0\||Q_1^{\nicefrac{1}{2}}-Q_2^{\nicefrac{1}{2}}|^2\|_{\mathcal{L}(L^2)}} < \frac{1}{2\||Q_1^{\nicefrac{1}{2}}-Q_2^{\nicefrac{1}{2}}|^2\|_{\mathcal{L}(L^2)}}$, by the exponential bound \eqref{eq:Y exp bound}
 \begin{align}
     \EE{\exp\left(\int_0^T 18q\delta\Ltwo{\nabla Y_M(s)}^2 ds\right)}\leq 2e^{18q\delta T \tr(|Q_1^{\nicefrac{1}{2}}-Q_2^{\nicefrac{1}{2}}|^2)} \leq 2e^{72 q\delta T K_{\max}}.\label{eq:(C) Y bound}
 \end{align}
 Thus, the estimates \eqref{eq:(C) bound 1}, \eqref{eq:(C) bound 2}, \eqref{eq:(C) bound 3}, \eqref{eq:(C) Y bound} show an existence of  $C_{\gamma_0,p,q,T}(X_0,K_{\max}) \in (0,\infty)$ such that
\begin{align}
    (C) \leq C_{\gamma_0,p,q,T}(X_0,K_{\max}). \label{eq:(C) term estimate} 
\end{align}\par 
Finally, by Theorem~\ref{thm:Cor 2.11} together with \eqref{eq:(A) bound}, \eqref{eq:(B) term estimate} and \eqref{eq:(C) term estimate}, there exists $C_{\alpha,\gamma_0,\delta_0,p,r,T}(X_0,K_{\max}) >0$ such that for all $t\in [0,T]$, it holds that
\begin{align*}
    \|X^{Q_1}_M(t) - X^{Q_2}_M(t)\|_{L^r(\Omega; L^2)} \leq C_{\alpha,\gamma_0,\delta_0,p,r,T}(X_0,K_{\max})
    \left\| (-A)^{-\nicefrac{\alpha}{2}} \left| Q_1^{\nicefrac{1}{2}} - Q_2^{\nicefrac{1}{2}} \right| \right\|_{\calL_2(L^2)}.
\end{align*}
\end{proof}

\appendix

\section{Proofs of Lemmas~\ref{lem:Y exp bound}--\ref{lem:sup of psi}}\label{App A}
All proofs presented here are minor adaptations of proofs in~\cite{Brehier_Cox_Millet_2024}. 
\begin{proof}[Proof of Lemma~\ref{lem:Y exp bound}]
This proof is an adaptation of the proof of Lemma 3.1 given in \cite[Appendix A]{Brehier_Cox_Millet_2024}. Recall the evolution equation \eqref{eq:Y_M evolution eq} for $Y_M$. Then applying the energy equality for $Y_M$ gives
    \begin{align*}
        \Ltwo{Y_M(t)}^2 = \int_0^t 2\innt{Y_M(s),AY_M(s)}\, ds + t\|P_MQ^{\nicefrac{1}{2}}\|^2_{\mathcal{L}_2(L^2)} + 2 \int_0^t \langle Y_M(s),P_M\,dW^{Q}(s)\rangle_{L^2}.
    \end{align*}
    Using integration by parts and multiplying by $\alpha>0$, one has
    \begin{align} \label{eq:Y exp proof 2}
        \alpha \Ltwo{Y_M(t)}^2 + \alpha &\int_0^t \Ltwo{\nabla Y_M(s)}^2\, ds= \alpha t\|P_MQ^{\nicefrac{1}{2}}\|^2_{\mathcal{L}_2(L^2)}\\
        &+ 2 \alpha \int_0^t \langle Y_M(s),P_M\,dW^{Q}(s)\rangle_{L^2} - \alpha \int_0^t \Ltwo{\nabla Y_M(s)}^2\, ds.
    \end{align}
    Set 
    \begin{align*}
        N(t) := \int_0^t \langle Y_M(s),P_M\,dW^{Q}(s)\rangle_{L^2}, \quad t \in [0,T],
    \end{align*}
    and 
    \begin{align*}
        Z_\alpha := \exp\left(\sup_{t\in [0,T]} \left( 2 \alpha \int_0^t \langle Y_M(s),P_M\,dW^{Q}(s)\rangle_{L^2} - \alpha \int_0^t \Ltwo{\nabla Y_M(s)}^2\, ds\right)\right).
    \end{align*}
    Note that $N(t)$ is a continuous, square integrable real-valued martingale. By \cite[Theorem 2.3]{Gawarecki_Mandrekar_2011}, its quadratic variation is given by 
    \begin{align*}
        \langle N \rangle_t = \int_0^t \|h \mapsto \langle Y_M(s),P_MQ^{\nicefrac{1}{2}}h\rangle_{L^2}\|^2_{\mathcal{L}_2(L^2,\,\R)} \, ds.
    \end{align*}
    Let $(f_k)_{k\in \N}$ be an arbitrary basis for $L^2$. Using Poincaré's inequality \eqref{eq:poincare} and $\|Q^{\nicefrac{1}{2}}\|^2_{\mathcal{L}(L^2)}=\|Q\|_{\mathcal{L}(L^2)}$,
    \begin{equation}
        \label{eq:exp Y proof 1}
    \begin{aligned}
        \langle N \rangle_t &= \int_0^t \sum_{k=1}^\infty |\langle Y_M(s),P_MQ^{\nicefrac{1}{2}}e_k\rangle_{L^2}|^2 \, ds  = \int_0^t \sum_{k=1}^\infty |\langle Q^{\nicefrac{1}{2}}Y_M(s),e_k\rangle_{L^2}|^2 \, ds  \\
        &= \int_0^t \|Q^{\nicefrac{1}{2}}Y_M(s)\|^2_{L^2} \, ds  \leq \frac{1}{2}\|Q\|_{\mathcal{L}(L^2)}\int_0^t \|\nabla Y_M(s)\|^2_{L^2} \, ds. 
    \end{aligned}
    \end{equation}
    Let $c(\alpha) = \frac{1}{\alpha\|Q\|_{\mathcal{L}(L^2)}}$, $\hat{N}(t) := \frac{2}{\|Q\|_{\mathcal{L}(L^2)}}N(t)$ and 
    \begin{align*}
        M_\alpha(t) := \exp\left(\hat{N}(t) - \frac{1}{2}\langle \hat{N} \rangle_t\right), \quad t \in [0,T].
    \end{align*}
    Note that $\langle \hat{N} \rangle_t = \frac{4}{\|Q\|^2_{\mathcal{L}(L^2)}} \langle N \rangle_t$. Then, since the estimate \eqref{eq:exp Y proof 1} holds, for all $K \in (0,\infty)$ one has
    \begin{align*}
        \Prob(Z_\alpha > e^K) &= \Prob(Z^{c(\alpha)}_\alpha > e^{c(\alpha)K})\\
        &\leq \Prob \left(\exp\bigg[\sup_{t\in[0,T]}\left(\frac{2}{\|Q\|_{\mathcal{L}(L^2)}}N(t) - \frac{2}{\|Q\|^2_{\mathcal{L}(L^2)}} \langle N \rangle_t\right)\bigg] >e^{c(\alpha)K}\right) \\
        &= \Prob\left( \exp\bigg[\sup_{t\in[0,T]}\left(\hat{N}(t) - \frac{1}{2} \langle \hat{N} \rangle_t \right)\bigg] > e^{c(\alpha)K} \right)\\
        &= \Prob\left(\sup_{t\in [0,T]}M_\alpha(t) > e^{c(\alpha)K}\right).
    \end{align*}
    Note that, by \cite[Proposition 3.4, p. 140]{Revuz_Yor_1991}, $M_\alpha(t)$ is a (positive) continuous local martingale and therefore a continuous supermartingale by \cite[ Exercise 1.46, p. 129]{Revuz_Yor_1991}. Hence, the maximal inequality in \cite[Exercise 1.15, p. 55]{Revuz_Yor_1991} implies
    \begin{align*}
        \Prob\left(\sup_{t\in [0,T]}M_\alpha(t) > e^{c(\alpha)K}\right) \leq e^{-c(\alpha)K}\EE{M_\alpha(0)}= e^{-c(\alpha)K}.
    \end{align*}
    Now assume $\alpha <\frac{2}{\|Q\|_{\mathcal{L}(L^2)}}$, so that $c(\alpha) > 2$. Combining the estimates above yields
    \begin{align*}
        \Prob(Z_\alpha > e^K) \leq e^{-2K}.
    \end{align*}
    In turn, change of variables shows
    \begin{align*}
        \EE{Z_\alpha} &= \int_0^\infty \Prob(Z_\alpha > z)\,dz \leq 1 +\int_1^\infty \Prob(Z_\alpha > z)\,dz = 1 + \int_0^\infty \Prob(Z_\alpha > e^{K})e^K\,dK \\
        &\leq 1 + \int_0^\infty e^{-K}\,dK = 2.
    \end{align*}
    Finally, taking supremum over $t\in [0,T]$ of \eqref{eq:Y exp proof 2}, exponentiating and taking expectation, we obtain
    \begin{align*}
        \EE{\exp\left(\alpha \sup_{t\in [0,T]}\Ltwo{Y_M(t)}^2+\alpha\int_0^T \Ltwo{\nabla Y_M(s)}^2 ds\right)} &\leq e^{\alpha T \|P_MQ^{\nicefrac{1}{2}}\|^2_{\mathcal{L}_2(L^2)}}\EE{Z_\alpha}\\
        &\leq 2 e^{\alpha T \tr(Q)}.
    \end{align*}
\end{proof}

\begin{remark}
    It should be possible to prove a slightly different version of Lemma~\ref{lem:Y exp bound} by using \cite[Corollary 2.4]{Cox_Hutzenthaler_Jentzen_2024}.
\end{remark}

\begin{proof}[Proof of Lemma~\ref{lem: X_M sup +exp moment bounds}]
    Note that the results \eqref{eq:X_M exp bound random initial} and \eqref{eq:X_M exp bound deterministic initial} are already specified with explicit dependence in \cite{Brehier_Cox_Millet_2024}, so it only remains to establish \eqref{eq:X_M p moment bound}. \\
    Define a stopping time $\tau_R = \inf\big( \big\{ t\in [0,T] \colon \big\| X_M^Q(t) \big\|_{L^2} \geq R \big\}\cup \{T\}\big)$ for all $R\in (0,\infty)$. Note that $\tau_R \to T$ a.s. when $R\to \infty$ due to the well-posedness of \eqref{eq:Galerkin approx}.\par 
    We begin by making the dependence on $Q$ in \cite[inequality (A.3)]{Brehier_Cox_Millet_2024} more explicit. Consider the inequality provided just above \cite[inequality (A.3)]{Brehier_Cox_Millet_2024}, that is, 
    \begin{align*}
        &\EE{\big\|X_M^Q(t \wedge \tau_R)\big\|^p} + p\EE{\int_0^{t \wedge \tau_R}\big\|\nabla X_M^Q(s)\big\|_{L^2}^2 \big\|X_M^Q(s)\big\|_{L^2}^{p-2} ds} \\
        &\leq \EE{\big\|P_M X_0}^p\big\|_{L^2} + \tfrac{1}{4}p^2\tr (Q) \EE{\int_0^t \big( \big\| X_M^Q(s \wedge \tau_R)\big\|_{L^2}^p + 1 \big)ds}.
    \end{align*}
    Since $\|P_M\|_{\mathcal{L}(L^2)}=1$, one has $\Ltwo{P_M X_0} \leq \Ltwo{X_0}$, and hence
    \begin{align*}
        \EE{\bigLtwo{X_M^Q(t \wedge \tau_R)}^p} \leq \EE{\Ltwo{X_0}^p} + \tfrac{1}{4}p^2 T\tr(Q) + \tfrac{1}{4}p^2\tr (Q) \int_0^t \EE{\bigLtwo{X_M^Q(s \wedge \tau_R)}^p} ds.
    \end{align*}
    An application of Grönwall's inequality (see, e.g.,~\cite[Lemma C.1.1]{Dalang_Sanz-Solé_2024}) shows
    \begin{align*}
        \EE{\bigLtwo{X_M^Q(t \wedge \tau_R)}^p} \leq \left(\EE{\Ltwo{X_0}^p} + \tfrac{1}{4}p^2 T\tr(Q)\right)\left(1+e^{\frac{1}{4}p^2 t \tr (Q)}\right).
    \end{align*}
    Taking supremum over $t \in [0,T]$, we deduce
    \begin{equation}
    \label{X_M moment sup bound}
    \begin{aligned}
        \sup_{t\in [0,T]}\EE{\bigLtwo{X_M^Q(t \wedge \tau_R)}^p} &\leq \left(\EE{\Ltwo{X_0}^p} + \tfrac{1}{4}p^2 T\tr(Q)\right)\left(1+e^{\frac{1}{4}p^2 T\tr (Q)}\right)  \\
        &\leq \left(1 + \tfrac{1}{4}p^2 T\tr(Q)\right)\left(1+e^{\frac{1}{4}p^2 T\tr (Q)}\right)(\EE{\Ltwo{X_0}^p} + 1) \\
        &\leq \left(1+e^{\frac{1}{4}p^2 T\tr (Q)}\right)^2(\EE{\Ltwo{X_0}^p} + 1).
    \end{aligned}
    \end{equation}
    Moreover, the following inequality is shown on \cite[p. 26]{Brehier_Cox_Millet_2024}:
    \begin{align}
        &\EE{\sup_{t\in [0,T]}\left(\bigLtwo{X_M^Q(t\wedge \tau_R)}^p + p\int_0^{t\wedge \tau_R} \bigLtwo{\nabla X_M^Q(s)}^2 \bigLtwo{X_M^Q(s)}^{p-2} ds \right)} \label{mom bounds 1}\\
        &\leq \EE{\Ltwo{X_0}^p} + \tfrac{1}{2}\mathbb{E}\Big[\sup_{s\in [0,T]}\bigLtwo{X_M^Q(s\wedge \tau_R)}^p\Big]
        + \tfrac{19}{4}p^2\tr(Q)\EE{\int_0^T \left(\EE{\bigLtwo{X_M^Q(s\wedge \tau_R)}^p}+1\right) ds}. \nonumber
    \end{align}
    Note that $\sup_{s\in [0,T]}\bigLtwo{X_M^Q(s\wedge \tau_R)}^p \leq R^p + \bigLtwo{X_0}^p < \infty$. Hence, it holds that
    \begin{align*}
        \mathbb{E}\Big[\sup_{s\in [0,T]}\bigLtwo{X_M^Q(s\wedge \tau_R)}^p\Big] < \infty,
    \end{align*} 
    so that it can be subtracted from both sides of \eqref{mom bounds 1}. Furthermore, using \eqref{X_M moment sup bound} for the term inside the integral in \eqref{mom bounds 1}, we obtain
    \begin{align*}
        &\EE{\sup_{t\in [0,T]}\left(\tfrac{1}{2}\bigLtwo{X_M^Q(t\wedge \tau_R)}^p + p\int_0^{t\wedge \tau_R} \bigLtwo{\nabla X_M^Q(s)}^2 \bigLtwo{X_M^Q(s)}^{p-2} ds \right)}  \\
        &\leq \EE{\Ltwo{X_0}^p} 
        + \tfrac{19}{4}p^2 T\tr(Q)\left(\left(1+e^{\frac{1}{4}p^2 T\tr (Q)}\right)^2(\EE{\Ltwo{X_0}^p} + 1)+1\right)  \\
        &\leq \left(1 +  \tfrac{19}{4}p^2 T\tr(Q)
        + \tfrac{19}{4}p^2 T\tr(Q)\left(1+e^{\frac{1}{4}p^2 T\tr (Q)}\right)^2\right)(\EE{\Ltwo{X_0}^p} + 1).
    \end{align*}
    Multiplying by a factor of $2$ and taking supremum over $M\in \N, R\in (0,\infty)$, we deduce
    \begin{align}
        &\sup_{M\in \N}\sup_{R \in (0,\infty)}\EE{\sup_{t\in [0,T]}\left(\bigLtwo{X_M^Q(t\wedge \tau_R)}^p + 2p\int_0^{t\wedge \tau_R} \bigLtwo{\nabla X_M^Q(s)}^2 \bigLtwo{X_M^Q(s)}^{p-2} ds \right)} \nonumber \\
        &\leq 2\left(1 +  \tfrac{19}{4}p^2 T\tr(Q)
        + \tfrac{19}{4}p^2 T\tr(Q)\left(1+e^{\frac{1}{4}p^2 T\tr (Q)}\right)^2\right)(\EE{\Ltwo{X_0}^p} + 1). \label{sup moment bound 2}
    \end{align}
    Letting $R\to \infty$, recalling that $\tau_R \to T$ a.s.\ and applying monotone convergence theorem shows the desired inequality, with 
    \begin{align*}
        F_{p,T}(x) = 2\left(1 +  \tfrac{19}{4}p^2 T x
        + \tfrac{19 }{4}p^2T x\left(1+e^{\frac{1}{4}p^2 Tx}\right)^2\right).
    \end{align*}
\end{proof}

\begin{proof}[Proof of Lemma~\ref{lem:L_inf bound of X_M}]
    We take over from the last inequality given in the proof of \cite[Lemma 3.2]{Brehier_Cox_Millet_2024}, which reads
\begin{align*}
    &\sup_{t \in [0,T], z \in [0,1]}|X_M^Q(t)(z)| \\
    &\leq C_\alpha \Ltwo{(-A)^{\alpha}X_0} + 2\sup_{t \in [0,T]}\|I_M(t)\|_{L^\infty}+C_{\Tilde{p}}\left(\int_0^T \bigLtwo{X_M^Q(s)}^{4\Tilde{p}-2}\bigLtwo{\nabla X_M^Q(s)}^2\, ds\right)^{\frac{3}{4\Tilde{p}}},
\end{align*}
where $I_M(t):= \int_0^te^{(t-s)A}P_MdW^Q(s) $ and $ \Tilde{p} = \frac{3p}{4}$. Raising this to $p = \frac{4\Tilde{p}}{3}$ and taking expectations, one obtains
\begin{align}
    &\EE{\sup_{t \in [0,T], z \in [0,1]}|X_M^Q(t)(z)|^p}\label{sup X_M bound 1}\\ &\leq 3^{p}\bigg(C^p_\alpha \EE{\Ltwo{(-A)^{\alpha}X_0}^p} + 2^p\mathbb{E}\Big[\sup_{t \in [0,T]}\|I_M(t)\|^p_{L^\infty}\Big] +C^p_{\Tilde{p}}\EE{\int_0^T \bigLtwo{X_M^Q(s)}^{4\Tilde{p}-2}\bigLtwo{\nabla X_M^Q(s)}^2\, ds} \bigg). \nonumber
\end{align}
By Lemma~\ref{lem:Y Linf bound}, one has
\begin{align}
    \EE{\sup_{t \in [0,T]}\|I_M(t)\|^p_{L^\infty}} \leq C_{p,T}\tr(Q)^{p/2}, \label{I_M sup bound}
\end{align} 
while by Lemma~\ref{lem: X_M sup +exp moment bounds} (i), since $4\Tilde{p} = 3p \geq 8$, there exists an increasing $F_{p,T}:(0,\infty) \to (0,\infty)$ such that
\begin{align}
    \EE{\int_0^T \bigLtwo{X_M^Q(s)}^{4\Tilde{p}-2}\bigLtwo{\nabla X_M^Q(s)}^2\, ds} \leq F_{p,T}(\tr (Q))(\EE{\Ltwo{X_0}^{3p}}+1). \label{X_M p tilde}
\end{align}
Plugging \eqref{I_M sup bound} and \eqref{X_M p tilde} in the inequality \eqref{sup X_M bound 1}, and taking supremum over $M \in\N$,
\begin{align*}
    &\sup_{M \in \N}\EE{\sup_{t \in [0,T], z \in [0,1]}|X_M^Q(t)(z)|^p}\\ &\leq 3^{p}\Big(C^p_\alpha \EE{\Ltwo{(-A)^{\alpha}X_0}^p} + 2^p C_{p,T}\tr(Q)^{p/2}+C^p_{\Tilde{p}}F_{p,T}(\tr (Q))(\EE{\Ltwo{X_0}^{3p}}+1)\Big) \\
    &\leq C_{p,\alpha,T}\max\{1,\tr (Q)^{p/2},F_{p,T}(\tr (Q))\} \left(1+\EE{\Ltwo{(-A)^{\alpha}X_0}^p} + \EE{\Ltwo{X_0}^{3p}}\right),
\end{align*}
for some $C_{p,\alpha,T} \in (0,\infty)$. The function $(0,\infty) \ni x \mapsto \max\{1,x^{p/2},F_{p,T}(x)\}$ is non-decreasing and only depends on $p$ and $T$, finishing the proof.
\end{proof} 

\begin{proof}[Proof of Lemma~\ref{lem:(-A)X_M bound}]
    Recall the mild formulation of $X_M^Q$. Using the triangle inequality in $C^\gamma([0,T],L^2)$ and inequalities (A.20), (A.21) and Lemma A.1 in \cite{Brehier_Cox_Millet_2024}, one has
\begin{align*}
     &\sup_{M \in \N} \EE{\|(-A)^{\lambda}X_M^Q\|^p_{C^\gamma([0,T],L^2)}} \\
     &\leq 3^p\bigg(\EE{\Ltwo{(-A)^{\alpha}X_0}^{p}} + C_{\gamma,\lambda,T}\sup_{M \in \N}\EE{\big\|X_M^Q \big\|^{2p}_{L^\infty([0,T],L^4)}}+ C_{\lambda,\gamma,p,T}\tr (Q)^{p/2}\bigg).
\end{align*}
As $\|\cdot\|_{L^4} \leq \|\cdot\|_{L^\infty}$, by Lemma~\ref{lem:L_inf bound of X_M}
\begin{align*}
    &\sup_{M \in \N} \EE{\|(-A)^{\lambda}X_M^Q\|^p_{C^\gamma([0,T],L^2)}} \\
    &\leq 3^p\bigg(\EE{\Ltwo{(-A)^{\alpha}X_0}^{p}} + C_{\gamma,\lambda,T}C_{p,\alpha,T}F_{p,T}(\tr(Q))\left(1+\EE{\Ltwo{(-A)^{\alpha}X_0}^{2p}} + \EE{\Ltwo{X_0}^{6p}}\right)\\
    &\hspace{1em}+ C_{\lambda,\gamma,p,T}\tr (Q)^{p/2}\bigg)\\
     &\leq C_{p,\alpha,\gamma,\lambda,T}\max\{1,F_{p,T}(\tr(Q)),\tr (Q)^{p/2}\}\left(1+\EE{\Ltwo{(-A)^{\alpha}X_0}^{2p}} + \EE{\Ltwo{X_0}^{6p}}\right),
\end{align*}
for some $C_{p,\alpha,\gamma,\lambda,T} \in (0,\infty)$. The function $(0,\infty) \ni x \mapsto \max\{1,F_{p,T}(x),x^{p/2}\}$ is non-decreasing and only depends on $p$ and $T$, which finishes the proof.
\end{proof}

\begin{proof}[Proof of Lemma~\ref{lem:sup of psi}]
The proof presented here is nearly identical to the one of \cite[Lemma 5.4]{Brehier_Cox_Millet_2024}, with differing parts highlighted in \textcolor{red}{red}. Note that the changes are justified by the more explicit bounds given in Lemma~\ref{lem: X_M sup +exp moment bounds} and Lemma~\ref{lem:(-A)X_M bound}.
    Recall that $\| (-A)^{\alpha} x \|_{L^2} \leq  \| (-A)^{\beta} x\|_{L^2}$ for all $x\in D((-A)^{\beta})$ and all $\alpha<\beta$
(see~\eqref{eq:A_fracpownorminc}). Thus, without loss of generality, we can assume $\delta_0< \frac{1}{4}$. 
Applying H\"older's inequality, one has for all 
$M\in\N$
\begin{align*}
\EE{\sup_{t\in [0,T]} \Psi_{\delta,\epsilon,q}(X_M^Q(t))}\leq &\left(
\EE{\sup_{t\in [0,T]} \exp\big(\tfrac{p\epsilon}{p-2q}\|X_M^Q(t)\|_{L^2}^2 \big)
}\right)^{1-\frac{2q}{p}}\\
&\times \left( \EE{
\sup_{t\in [0,T]} \big(1+\|(-A)^{\frac{1}{4}+\delta } X_M^Q(t)\|_{L^2}^{q}\big)^{\frac{p}{2q}}
}\right)^{\frac{2q}{p}}.
\end{align*}
Applying the exponential moment bounds~\textcolor{red}{\eqref{eq:X_M exp bound random initial}} from Lemma~\textcolor{red}{\ref{lem: X_M sup +exp moment bounds}} (with $\beta=\frac{p\epsilon}{p-2q}$) and the inequality~\textcolor{red}{\eqref{eq:(-A)X_M inequality}} from Lemma~\textcolor{red}{\ref{lem:(-A)X_M bound}} (with $\lambda=\frac14+\delta$, $\gamma \in (0,\delta_0-\delta)$, $\alpha=\frac14+\delta_0$) one obtains that there exists \textcolor{red}{a non-decreasing $F_{p,T}:(0,\infty)\to (0,\infty)$ (dependent on $p$ and $T$) and $C_{\delta,\delta_0,p,T} >0$} such that
\begin{align*}
    &\EE{\sup_{t\in [0,T]} \exp\big(\tfrac{p\epsilon}{p-2q}\|X_M^Q(t)\|_{L^2}^2 \big)}\le \textcolor{red}{2e^{\frac{\gamma_0 T \tr(Q)}{1+2\gamma_0 \|Q \|_{\mathcal{L}(L^2)}}}}\left( \EE{\exp\big(\gamma_0 \|X_0\|_{L^2}^2 \big)}\right)^{\frac{p\epsilon}{(p-2q)\gamma_0}}\\
    &\EE{ \sup_{t\in [0,T]} \big(1+\|(-A)^{\frac{1}{4}+\delta } X_M^Q(t)\|_{L^2}^{q}\big)^{\frac{p}{2q}}}\le \textcolor{red}{C_{\delta,\delta_0,p,T}F_{p,T}(\tr (Q))}\Big( 1+\E \left[\| (-A)^{\frac14+\delta_0} X_0 \|_{L^2}^{p}\right]+ 
    \EE{\| X_0 \|_{L^2}^{3p}}\Big).
\end{align*} 
Note that $\epsilon<\gamma_0$, therefore one has
\[
\left( \EE{\exp\big(\gamma_0 \|X_0\|_{L^2}^2 \big)}\right)^{\frac{\epsilon}{\gamma_0}}\le 1+\EE{\exp\big(\gamma_0 \|X_0\|_{L^2}^2 \big)}.
\]
Moreover, there exists $C_{\gamma_0,p}\in(0,\infty)$ such that
\begin{align*}
1+\E \left[\| (-A)^{\frac{1}{4}+\delta_0} X_0 \|_{L^2}^{p}\right]+  \E \left[\| X_0 \|_{L^2}^{3p}\right]&\le \big(1+\E \left[\| (-A)^{\frac{1}{4}+\delta_0} X_0 \|_{L^2}^{p}\right]\big) \big(1+\EE{\| X_0 \|_{L^2}^{3p}}\big)\\
&\le C_{\gamma_0,p}\big(1+\EE{\| (-A)^{\frac{1}{4}+\delta_0} X_0 \|_{L^2}^{p}}\big)\big(1+\EE{\exp\big(\gamma_0 \|X_0\|_{L^2}^2 \big)}\big).
\end{align*}
Combining the upper bounds then yields the inequality~\textcolor{red}{\eqref{eq:sup of Psi bound}} and concludes the proof of Lemma~\textcolor{red}{\ref{lem:sup of psi}}.
\end{proof}

\end{document}